\newtheorem{theorem}{Theorem}[section]
\newtheorem{remark}[theorem]{Remark}
\title{A numerical methodology for enforcing maximum principles and 
the non-negative constraint for transient diffusion equations}
\author{$\mathrm{K.~B.~Nakshatrala}^{\#}$ 
\footnote{\noindent \#Corresponding author:~\emph{\textbf{e-mail:}} 
knakshatrala@uh.edu, \emph{\textbf{phone:}}+1-713-743-4418 \\
$^*$Graduate student} \\
{\tiny Department of Civil and Environmental Engineering, 
University of Houston, Houston, Texas 77204-4003.} \\ 
$\mathrm{H.~Nagarajan}^{*}$ \\
{\tiny Department of Mechanical Engineering, Texas A\&M 
University, College Station, Texas 77843-3193.} \\
$\mathrm{M.~Shabouei}^{*}$ \\
{\tiny Department of Civil and Environmental Engineering, 
University of Houston, Houston, Texas 77204-4003.} 
}
\date{\today}
\begin{document}

%======================================;
%  Abstract and keywords of the paper  ;
%======================================;
\begin{abstract}
  Transient diffusion equations arise in many branches of engineering 
  and applied sciences (e.g., heat transfer and mass transfer), and are 
  parabolic partial differential equations. It is well-known that, under 
  certain assumptions on the input data, these equations satisfy 
  important mathematical properties like maximum principles and 
  the non-negative constraint, which have implications in mathematical 
  modeling. However, existing numerical formulations for these types 
  of equations do not, in general, satisfy maximum principles and the 
  non-negative constraint. 
  In this paper, we present a methodology for enforcing maximum 
  principles and the non-negative constraint for \emph{transient} 
  anisotropic diffusion equation. The method of horizontal lines 
  (also known as the Rothe method) is applied in which the time 
  is discretized first. This results in solving steady anisotropic 
  diffusion equation with decay equation at every discrete time 
  level. 
  The proposed methodology can handle general computational grids 
  with no additional restrictions on the time step. We illustrate the 
  performance and accuracy of the proposed formulation using 
  representative numerical 
  examples. We also perform numerical convergence of the proposed 
  methodology. For comparison, we also present the results from 
  the standard single-field semi-discrete formulation and the results 
  from a popular software package, which all will violate maximum 
  principles and the non-negative constraint.  
\end{abstract}
\keywords{numerical heat and mass transfer; maximum 
principles; non-negative solutions; anisotropic diffusion; 
method of horizontal lines; convex quadratic programming; 
parabolic PDEs}

\maketitle 

%=================================;
%  Include all the sections here  ;
%=================================;

%*******************************************;
%                                           ;
%  NAME                                     ;
%    S1_TransientDMP_Introduction.tex       ;
%                                           ;
%  WRITTEN BY                               ;
%    Kalyana Nakshatrala                    ;
%                                           ;
%*******************************************;
\section{INTRODUCTION AND MOTIVATION}
\label{Sec:TransientDMP_Introduction}
Certain quantities (e.g., concentration of a chemical 
species and absolute temperature) naturally attain 
non-negative values. A violation of the non-negative 
constraint for these quantities will imply violation 
of some basic tenets of Physics\footnote{There are 
some systems for which negative temperatures are 
possible (see Kittel and Kroemer \cite{Kittel_Kroemer}). 
Such cases are beyond the scope of this paper.}. It is, 
therefore, imperative that such physical constraints 
are met by mathematical models and by their associated 
numerical formulations. 
In this paper, we shall focus on two popular transient 
mathematical models, in which physical restrictions 
like the non-negative constraint play a central role. 
The first model is based on Fick's assumption (commonly 
referred to as Fick's law) and balance of mass. Fick's 
assumption is a simple constitutive model to describe 
the diffusion of a chemical species in which the flux 
is proportional to the negative gradient of the 
concentration. The second model is based on Fourier's 
assumption and balance of energy, which describes heat 
conduction in a rigid conductor. Both these constitutive 
models combined with their corresponding balance laws 
give rise to transient diffusion-type equations, 
which are parabolic partial differential equations. 

There has been tremendous progress in Applied Mathematics 
for these type of equations with respect to existence and 
uniqueness results, qualitative behavior of solutions, 
estimates, and other mathematical properties \cite{Pao,
  Evans_PDE}. In particular, it has been shown that 
transient diffusion-type equations satisfy the so-called 
maximum principles \cite{Pao}. It will be shown in a 
subsequent section that the non-negative constraint 
can be shown as a consequence of maximum principles 
under certain assumptions. 
Analytical solutions to several problems have been 
documented in various monographs (e.g., see references 
\cite{Carslaw_Jaeger,Ozisik}). However, it should be 
noted that most of these solutions are for isotropic 
and homogeneous media, and for simple geometries. For 
problems involving anisotropic and heterogeneous media, 
and complex geometries; finding analytical solutions 
is not possible, and one has to resort to numerical 
solutions. 
Obtaining physically meaningful numerical solutions for 
transient diffusion equation that satisfy maximum principles 
and the non-negative constraint is the main aim of this paper. 
It is well-known (and will be discussed in subsequent sections) 
that many popular numerical schemes (including the ones that 
are based on the finite element method) do not satisfy maximum 
principles and the non-negative constraint. Even for isotropic 
diffusion, stringent restrictions on the time step and the 
computational mesh are necessary to meet these important 
mathematical properties. 

The usual approach of solving linear second-order parabolic 
partial differential equations under the finite element method 
is to employ Galerkin formalism for spatial discretization. 
Several theoretical results (which include convergence 
proofs, a-priori estimates) for this approach can be 
found in the literature (e.g., see Reference 
\cite{Douglas_Dupont_SIAMJNA_1970_v07_p575}). But it 
has been adequately documented in the literature that 
this approach will not satisfy maximum principles and 
the non-negative constraint (for example, see Reference 
\cite{Harari_CMAME_2004_v193_p1491}, and also the 
discussion in Appendix). Thus, there is a need to 
develop new methodologies that will satisfy important 
mathematical properties like maximum principles and 
the non-negative constraint, and thereby improve the 
overall predictive capabilities of current numerical 
schemes. 

%===============================================================;
%  Subsection: Maximum principles for diffusion-type equations  ;
%===============================================================;
\subsection{Maximum principles for diffusion-type 
  equations in numerical setting}
The first study on maximum principles in the context of 
finite elements can be traced back to the seminal paper 
by Ciarlet and Raviart \cite{Ciarlet_Raviart_CMAME_1973_v2_p17}, 
which considered steady-state isotropic diffusion, low-order 
approximation, and simplicial elements. 
Ciarlet and Raviart points out that the single-field 
formulation (which is based on the Galerkin formalism) 
does not converge uniformly for isotropic diffusion 
equation unless some restrictions are placed on the 
mesh. In particular, they show that a sufficient 
condition for a three-node triangular element to 
converge uniformly and to meet maximum principles 
is that the triangle has to be acute (i.e., all the 
angles are less than 90-degrees). But this sufficient 
condition is valid is only for isotropic diffusion, 
and steady-state.
Since then, several other studies have addressed maximum 
principles for steady-state diffusion-type equations. A 
more detailed account of various works can be found in 
references \cite{Nakshatrala_Valocchi_JCP_2009_v228_p6726,
Nagarajan_Nakshatrala_IJNMF_2011_v67_p820,
Payette_Nakshatrala_Reddy_IJNME_2012}. Although these papers 
have considered steady-state diffusion equation, the discussion 
in these papers is applicable to transient diffusion equations. 
A brief summary of these three papers is as follows. 
In Reference \cite{Nakshatrala_Valocchi_JCP_2009_v228_p6726}, 
a non-negative methodology for mixed finite element formulation 
has been proposed for steady-state diffusion equation using 
techniques from convex quadratic programming. The paper also 
studied the effect of the non-negative methodology on the 
element local mass balance. 
In Reference \cite{Nagarajan_Nakshatrala_IJNMF_2011_v67_p820}, 
a methodology has been proposed for steady-state diffusion equation 
with decay that satisfies maximum principles and the non-negative 
constraint on general computational grids. (Note that the maximum 
principle for diffusion with decay is slightly different from the 
maximum principle with out decay.) This methodology will be 
utilized later in the present paper. 
In Reference \cite{Payette_Nakshatrala_Reddy_IJNME_2012}, 
a systematic study on the effect of high-order approximation 
on the violation of maximum principles and the non-negative 
constraint. In particular, it has been shown using numerical 
simulations that the violation of the non-negative constraint 
does not decrease with $p$-refinement. 
Some representative works in other areas of discretization 
to obtain monotone solutions include finite volume methods 
\cite{Lipnikov_Shashkov_Svyatskiy_Vassilevski_JCP_2007_v227_p492, 
Lipnikov_Svyatskiy_Vassilevski_JCP_2009_v228_p703,
Lipnikov_Svyatskiy_Vassilevski_JCP_2010_v229_p4017}, 
and mimetic finite difference methods 
\cite{Lipnikov_Manzini_Svyatskiy_JCP_2011_v230_p2620}.

%========================================================;
%  Subsection: Maximum principles for transient systems  ;
%========================================================;
\subsubsection{Maximum principles for transient systems}
Transient diffusion equations fall in the realm of 
parabolic partial differential equations (PDEs), 
whereas steady-state diffusion equations are 
elliptic PDEs. A noticeable difference in maximum 
principles for parabolic PDEs and the corresponding 
ones for elliptic PDEs is that, in the case of a 
parabolic PDE, the maximum can occur either on the 
boundary of the domain or in the initial conditions. 
On the other hand, for a second-order elliptic PDE, 
the classical maximum principle says that the maximum 
occurs on the boundary of the domain (under some 
appropriate conditions on the input data and 
domain). A more precise mathematical treatment in 
Section \ref{Sec:TransientDMP_Governing_equations}.

Several papers have also addressed maximum principles 
for transient systems (i.e., parabolic problems) in 
numerical setting. Herrera and Valocchi 
\cite{Herrera_Valocchi_GW_2006_v44_p803} have employed 
flow-oriented derivatives with backward Euler to obtain 
non-negative solutions in the context of finite difference 
and finite volume methods. One method that is commonly 
employed in the area of subsurface hydrology is by Chen 
and Thomee \cite{Chen_Thomee_JAMS_1985_v26_p329}. This 
method is based on the standard single-field formulation 
but employs lumped capacity matrix. 
(By the standard single-field formulation we refer to the 
formulation obtained by employing the semi-discrete approach 
using method of vertical lines at integral time steps, and 
Galerkin formalism for spatial discretization. See Appendix 
for more details of this formulation.) 
It is noteworthy that lumping capacity matrix approach 
is commonly considered as a variational crime \cite{Hughes}. 
More importantly, lumped capacity matrix is not sufficient 
to meet maximum principles and the non-negative constraint 
for anisotropic diffusion even if one employs the backward 
Euler time stepping scheme (e.g., see subsection 
\ref{Subsec:TransientDMP_plate_with_hole} and Appendix). 
Reference \cite{Berzins_CNME_2001_p659} also alters the 
capacity matrix to preserve positivity for parabolic 
problems but restricts to isotropic diffusion. Other 
notable works are 
\cite{Rank_Katz_Werner_IJNME_1983_v19_p1771, 
  Porru_Serra_JAMS_1994_v56_p41,
  Farago_Horvath_Korotov_ANM_2005_v53_p249,
  Elshebli_AMM_2008_v32_p1530}, which all focused on 
getting restrictions on the mesh (and in some cases 
on the time step) to meet maximum principles. More 
importantly, they did not consider anisotropy, and 
such restrictions are not possible for anisotropic 
and heterogeneous medium. 

%  Time step restrictions  
There are several papers that considered consistent 
capacity matrices, but derived restrictions on the 
time step to satisfy maximum principles 
\cite{Mizukami_CMAME_1986_v59_p101,Thomas_Zhou_CNME_1998_p809,
  Ilinca_Hetu_CMAME_2002_v191_p3073,Harari_CMAME_2004_v193_p1491,
  Horvath_IJCMA_2008_v55_p2306}. 
A striking difference between the time step restrictions 
with respect to numerical stability and maximum principles 
is that numerical stability places an upper bound on the 
selection of the time step whereas maximum principles 
place a lower bound on the selection of the time step. 
The time step is selected based on the following 
inequality:
%---------------------------------;
%  Equation: Time step selection  ;
%---------------------------------;
\begin{align}
  0 < \Delta t_{\mathrm{crit}}^{\mathrm{MP}} \leq \Delta t 
  \leq \Delta t_{\mathrm{crit}}^{\mathrm{stability}} 
\end{align}
where $\Delta t_{\mathrm{crit}}^{\mathrm{stability}}$ is the critical time 
step to obtain stable results, and $\Delta t_{\mathrm{crit}}^{\mathrm{MP}}$ 
is the critical time step to satisfy maximum principles. It should be 
however mentioned that these works on deriving time step restrictions 
have considered one-dimensional problems or isotropic media, and these 
conditions are not applicable otherwise. To the best of our knowledge, 
none of the prior works presented a methodology for transient anisotropic 
diffusion equations to satisfy maximum principles and the non-negative 
constraint on general computational grids with no further restrictions 
on the time step.

%=================================================================;
%  Subsection: Our approach and main contributions of this paper  ;
%=================================================================;
\subsection{Our approach and main contributions of this paper}
In this paper, we shall employ the Rothe method (or the method of 
horizontal lines) \cite{Rothe_MA_1930_v102_p650} to solve transient 
anisotropic diffusion equation. There are several papers in the literature 
that have employed Rothe method to solve parabolic equations 
\cite{Harari_CMAME_2004_v193_p1491, Bornemann_ICSE_1990_v02_p279,
Lang_Walter_ANM_1993_v13_p135,Chapko_Kress_JIEA_1997_v09_p47}. 
These papers, except for Reference \cite{Harari_CMAME_2004_v193_p1491}, 
did not apply the Rothe method in the context of maximum principles. 
Although Reference \cite{Harari_CMAME_2004_v193_p1491} 
addressed maximum principles by using the Rothe method, but 
the formulation is restricted to isotropic diffusion. In addition, 
Reference \cite{Harari_CMAME_2004_v193_p1491} employed techniques 
from stabilized methods, which is different from the approach taken 
in this paper. In the proposed formulation, the temporal discretization 
using the Rothe method will give rise to inhomogeneous elliptic partial 
differential equation, which is solved using the approach presented in 
our earlier paper \cite{Nagarajan_Nakshatrala_IJNMF_2011_v67_p820}. An 
attractive aspect of the proposed methodology is that there are no 
additional restrictions on the time step to meet maximum principles.

%

%\begin{itemize}
%\item
%\cite{Harari_CMAME_2004_v193_p1491}

%\item
%\cite{Rank_Katz_Werner_IJNME_1983_v19_p1771}

%\item
%\cite{Farago_Horvath_Korotov_ANM_2005_v53_p249}

%\item
%In \cite{Mizukami_CMAME_1986_v59_p101}, author discusses about an explicit-explicit algorithm where the 
%time increments calculated are adapted according to the local mesh size but subjected to restrictions on type 
%of mesh used and does not hold good on any arbitrary meshes. 

%\item
%In \cite{Lang_Walter_ANM_1993_v13_p135}, authors discuss about the 

%\item 
%\cite{Douglas_Dupont_SIAMJNA_1970_v07_p575}
%\item
%\cite{Warzee_CMAME_1974_p255}
%\item
%\cite{Chen_Thomee_JAMS_1985_v26_p329}
%\item
%\cite{Bornemann_ICSE_1990_v02_p279}
%\item
%\cite{Porru_Serra_JAMS_1994_v56_p41}
%\item
%\cite{Thomas_Zhou_CNME_1998_p809}
%\item
%\cite{Berzins_CNME_2001_p659}
%\item
%\cite{Ilinca_Hetu_CMAME_2002_v191_p3073}
%\item
%\cite{Elshebli_AMM_2008_v32_p1530}
%\item
%\cite{Horvath_IJCMA_2008_v55_p2306}
%\end{itemize}

%=======================================================;
%  An outline and symbolic notation used in this paper  ;
%=======================================================;
\subsection{An outline and notation used in this paper}
The remainder of this paper is organized as follows. In Section 
\ref{Sec:TransientDMP_Governing_equations}, we present governing 
equations for transient anisotropic diffusion, and discuss 
maximum principles and the non-negative constraint. In Section 
\ref{Sec:TransientDMP_Derivation}, we derive a methodology for 
enforcing maximum principles and the non-negative constraint 
for transient anisotropic diffusion equation using the method 
of horizontal lines. In Section \ref{Sec:TransientDMP_NR}, we 
illustrate the performance of the proposed formulation using 
representative numerical examples. Finally, conclusions are 
drawn in Section \ref{Sec:TransientDMP_Conclusions} with a 
discussion on plausible future works on enforcing maximum 
principles.

The symbolic notation adopted in this paper is as follows. 
Repeated indices do not imply summation. (That is, we do 
not employ Einstien's summation convention.) We shall 
employ the standard notation for open, closed and half-open 
intervals \cite{Bartle_Sherbert}: 
%------------------------------------;
%  Equation: Notation for intervals  ;
%------------------------------------;
\begin{align}
    (a, b) := \{\mathrm{x} \in \mathbb{R} \; \big| \; a < \mathrm{x} < b \}, \; 
    [a, b] := \{\mathrm{x} \in \mathbb{R} \; \big| \; a \leq \mathrm{x} \leq b \}, \nonumber \\
    (a, b] := \{\mathrm{x} \in \mathbb{R} \; \big| \; a < \mathrm{x} \leq b \}, \; 
    [a, b) := \{\mathrm{x} \in \mathbb{R} \; \big| \; a \leq \mathrm{x} < b \}
\end{align}
Similar to our earlier paper \cite{Nagarajan_Nakshatrala_IJNMF_2011_v67_p820}, 
we shall make a distinction between vectors in the continuum and finite 
element settings. We also make a distinction between second-order 
tensors in the continuum setting versus matrices in the context of the 
finite element method. The continuum vectors are denoted by lower case 
boldface normal letters, and second-order tensors will be denoted by 
upper case boldface normal letters (for example, vector $\mathbf{x}$ 
and second-order tensor $\mathbf{D}$). In the finite element context, 
we shall denote the vectors using lower case boldface italic letters, 
and the matrices are denoted using upper case boldface italic letters. 
For example, vector $\boldsymbol{v}$ and matrix $\boldsymbol{K}$. Other 
notational conventions adopted in this paper are introduced as needed.

%***************************************************;
%                                                   ;
%  NAME                                             ;
%    S2_TransientDMP_GE.tex                         ;
%                                                   ;
%  WRITTEN BY                                       ;
%    Kalyana Nakshatrala                            ;
%                                                   ;
%***************************************************;
\section{GOVERNING EQUATIONS: TRANSIENT ANISOTROPIC DIFFUSION}
\label{Sec:TransientDMP_Governing_equations}
Let $\Omega \subset \mathbb{R}^{nd}$ be a bounded open set, where 
``$nd$'' denotes the number of spatial dimensions. The boundary 
is denoted by $\partial \Omega$, which is assumed to be piecewise 
smooth. A spatial point is denoted by $\mathbf{x} \in \overline{\Omega}$. 
The gradient and divergence with respect to $\mathbf{x}$ are denoted 
by $\mathrm{grad}[\cdot]$ and $\mathrm{div}[\cdot]$, respectively. 
Let $t \in [0, \mathcal{I}]$ denote the time, where $\mathcal{I} 
> 0$ denotes the length of the time interval. The concentration 
of an inert chemical species is denoted by $c(\mathbf{x},t)$. The 
(spatial) boundary is divided into two parts: $\Gamma^{\mathrm{D}}$ 
and $\Gamma^{\mathrm{N}}$ such that $\Gamma^{\mathrm{D}} \cup 
\Gamma^{\mathrm{N}} = \partial \Omega$ and $\Gamma^{\mathrm{D}} 
\cap \Gamma^{\mathrm{N}} = \emptyset$. 
$\Gamma^{\mathrm{D}}$ is that part of the boundary on which 
Dirichlet boundary condition (i.e., the concentration) 
is prescribed, and $\Gamma^{\mathrm{N}}$ is the part of the 
boundary on which Neumann boundary condition (i.e., the 
flux) is prescribed. The unit outward normal to the 
boundary is denoted by $\mathbf{\hat{n}}(\mathbf{x})$. 
The governing equations for transient anisotropic 
diffusion can be written as follows:
%--------------------------------------------------------------------;  
%  Equation: Governing equations of transient anisotropic diffusion  ;
%--------------------------------------------------------------------;
\begin{subequations}
  \label{Eqn:TransientDMP_Governing_equations}
  \begin{align}
  \label{Eqn:TransientDMP_PDE}
    &\frac{\partial c(\mathbf{x},t)}{\partial t} -\mathrm{div}
    [\mathbf{D}(\mathbf{x}) \mathrm{grad}[c(\mathbf{x},t)]] = 
    f(\mathbf{x},t) \quad \mathrm{in} \; \Omega \times (0, 
    \mathcal{I})  \\
    & c(\mathbf{x},t) = c_p(\mathbf{x},t) \quad \mathrm{on} \; 
    \Gamma^{\mathrm{D}} \times (0, \mathcal{I}) \\
   \label{Eqn:TransientDMP_Neumann}
    & \mathbf{\hat{n}}(\mathbf{x}) \cdot \mathbf{D}(\mathbf{x}) 
   \mathrm{grad}[c(\mathbf{x},t)] = q_p(\mathbf{x},t) \quad 
   \mathrm{on} \; \Gamma^{\mathrm{N}} \times (0, \mathcal{I}) \\
    \label{Eqn:TransientDMP_IC}
    & c(\mathbf{x},t=0) = c_0(\mathbf{x}) \quad \mathrm{in} \; \Omega
  \end{align}
\end{subequations}
where $\mathbf{D}(\mathbf{x})$ is the diffusivity tensor, 
$f(\mathbf{x},t)$ is the volumetric source/sink, $c_p(\mathbf{x},t)$ 
is the prescribed concentration on the boundary, $q_p(\mathbf{x},t)$ 
is the prescribed flux on the boundary, and $c_0(\mathbf{x})$ is the 
prescribed initial condition. The diffusivity tensor is symmetric, 
and is assumed to be bounded above and uniformly elliptic. That is, 
there exists two constants $\mathrm{0 < \xi_1 \leq \xi_2 < +\infty}$ 
such that 
%----------------------------------------;
%  Equation: Positive definiteness of D  ;
%----------------------------------------;
\begin{align}
\label{Eqn:Helmholtz_positive_definiteness_D}
  \xi_1 \mathbf{y}^{\mathrm{T}} \mathbf{y}\leq \mathbf{y}^{\mathrm{T}} 
  \mathbf{D}(\mathbf{x}) \mathbf{y} \leq \xi_2 \mathbf{y}^{\mathrm{T}} 
  \mathbf{y} \quad \forall \mathbf{x} \in \Omega \; \mathrm{and} \; 
  \forall \mathbf{y} \in \mathbb{R}^{nd}
\end{align}
The above initial boundary value problem given by equations 
\eqref{Eqn:TransientDMP_PDE}--\eqref{Eqn:TransientDMP_IC} is 
a \emph{linear parabolic partial differential equation}. From 
the theory of partial differential equations, such equations 
are known to satisfy maximum principles under appropriate 
regularity assumptions on the input data and on the domain 
\cite{Protter_Weinberger,McOwen}. 

%------------------------------------------------;
%  Remark: Remark on hyperbolic heat conduction  ;
%------------------------------------------------;
\begin{remark}
  It should be noted that a consequence of Fickian/Fourier 
  mathematical model is that a thermal/chemical disturbance 
  at a point will be felt at other points instantaneously. 
  This is because of the parabolic nature of the resulting 
  partial differential equations. To put it differently, 
  these mathematical models predict that the information 
  travels at infinite speed, which is against the current 
  accepted laws of Physics. Several modifications have 
  been suggested in the area of heat conduction to have 
  finite speeds for thermal disturbances, and most of 
  these models are hyperbolic partial differential 
  equations. Some notable works on this topic are by 
  Maxwell \cite{Maxwell_PTRSL_1866_vA157_p26}, Catteneo 
  \cite{Cattaneo_CR_1958_v247_p431}, and Gurtin and 
  Pipkin \cite{Gurtin_Pikin_ARMA_1968_v31_p252}.
  A more detailed discussion with respect to finite 
  speed thermoelasticity can be found in Reference 
  \cite{Ignaczak_Starzewski}. It is noteworthy that 
  hyperbolic partial differential equations do not 
  possess maximum principles ``similar'' to the ones 
  possessed by elliptic and parabolic partial 
  differential equations. This area of research 
  is far from settled, and is beyond the scope 
  of this paper. 
\end{remark}

%==========================================================;
%  Subsection: Maximum principles for parabolic equations  ;
%==========================================================;
\subsection{Maximum principles for parabolic equations}
\label{Subsec:TransientDMP_maximum_principle}
Maximum principles for parabolic partial differential equations 
can be traced back to Levi \cite{Levi_AMPA_1908_v14_p187} and 
Picone \cite{Picone_AMPA_1929_v7_p145}. A brief history and 
other references on maximum principles for parabolic partial 
differential equations can be found in the book by Protter 
and Weinberger \cite{Protter_Weinberger}. Herein, we shall 
employ an approach similar to that of Nirenberg 
\cite{Nirenberg_CPAM_1953_v6_p167}.
Before we state a maximum principle for linear parabolic partial 
differential equations, we shall introduce relevant notation and 
definitions. The parabolic cylinder is defined as $\Omega_{\mathcal{I}} 
:= \mathrm{\Omega} \times (0,\mathcal{I})$. The parabolic boundary 
is defined as follows: 
%--------------------------------;
%  Equation: Parabolic boundary  ;
%--------------------------------;
\begin{align}
  \Gamma_{\mathcal{I}} := \left\{(\mathbf{x},t) \in 
    \overline{\Omega}_{\mathcal{I}} \; \Big| \; \mathbf{x} 
    \in \partial \Omega \; \mathrm{or} \; t = 0\right\}
\end{align}
The parabolic cylinder and parabolic boundary are pictorially 
described in Figure \ref{Fig:TransientDMP_parabolic_plot}. 
%------------------------------; 
%  Figure: Parabolic cylinder  ;
%------------------------------;
\begin{figure}[t]
  \centering	
  \psfrag{R}{$\mathbb{R}$}
  \includegraphics[scale=1.0]{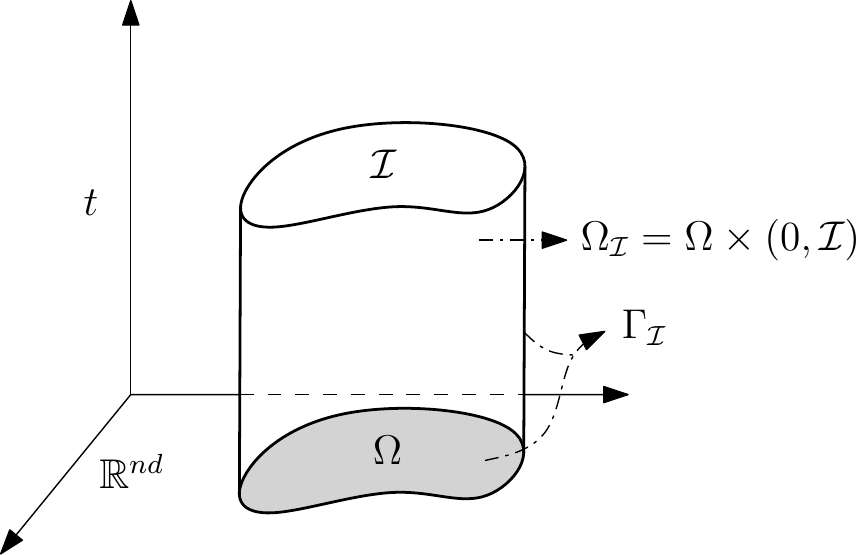}
  \caption{A pictorial description of parabolic cylinder $\Omega_{\mathcal{I}}$ 
    and parabolic boundary $\Gamma_{\mathcal{I}}$.} 
  \label{Fig:TransientDMP_parabolic_plot}
\end{figure}
Let $C^{m}(\Omega)$ denotes the set of functions defined on 
$\Omega$ that are continuously differentiable up to $m$-th 
order. We shall introduce the following function space with 
differing smoothness in the $\mathbf{x}$- and $t$-variables:
%----------------------------------------;
%  Equation: C^2_1 mixed function space  ;
%----------------------------------------;
\begin{align}
  C_{1}^{2}(\Omega_{\mathcal{I}}) := \left\{c : \Omega_{\mathcal{I}} 
    \rightarrow \mathbb{R} \; | \; c, \frac{\partial c}{\partial x_i}, 
    \frac{\partial^2 c}{\partial x_i \partial x_j}, \frac{\partial c}{\partial t} 
    \in C(\Omega_{\mathcal{I}}); i, j = 1, \cdots, nd \right\}
\end{align}
%------------------------------;
%  Theorem: Maximum principle  ;
%------------------------------;
\begin{theorem}[maximum principle]
  \label{Theorem:TransientDMP_MP_parabolic}
  Let $c(\mathbf{x},t)$ $\in C^{2}_{1}(\Omega_{\mathcal{I}}) \cap 
  C(\overline{\Omega}_{\mathcal{I}})$ satisfy $\partial c/\partial t - 
  \mathrm{div}[\mathbf{D}(\mathbf{x}) \mathrm{grad}[c]] \geq 0$ 
  in $\Omega_{\mathcal{I}}$. Then $c(\mathbf{x},t)$ achieves its 
  minimum on the parabolic boundary of $\Omega_{\mathcal{I}}$. 
  That is, 
  %-------------------------------------;
  %  Equation: Minimum on the boundary  ;
  %-------------------------------------;
  \begin{align}
    \label{Eqn:TransientDMP_Maximum_principle}
    \mathop{\mathrm{min}}_{(\mathbf{x},t) \in \overline{\Omega}_{\mathcal{I}}} 
    \; c(\mathbf{x},t) = \mathop{\mathrm{min}}_{(\mathbf{x},t) \in 
      \Gamma_{\mathcal{I}}} \; c(\mathbf{x},t)
  \end{align}
\end{theorem}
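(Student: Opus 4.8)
The plan is to establish this as a \emph{weak minimum principle} (it is phrased with a minimum because the hypothesis is $\partial c/\partial t - \mathrm{div}[\mathbf{D}\,\mathrm{grad}[c]] \geq 0$), following the classical perturbation strategy in the spirit of Nirenberg \cite{Nirenberg_CPAM_1953_v6_p167} and Protter--Weinberger \cite{Protter_Weinberger}. The two steps are: first prove the conclusion under the \emph{strict} differential inequality by a pointwise second-derivative test, then recover the stated non-strict hypothesis by adding a vanishing perturbation and passing to the limit. Throughout I abbreviate $L[c] := \partial c/\partial t - \mathrm{div}[\mathbf{D}\,\mathrm{grad}[c]]$, so the hypothesis reads $L[c] \geq 0$; consistently with the regularity $c \in C^2_1(\Omega_{\mathcal{I}})$, I take $\mathbf{D} \in C^1(\Omega)$ so that the divergence may be expanded in non-divergence form at interior points.

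\emph{Strict case.} Suppose $w \in C^2_1(\Omega_{\mathcal{I}}) \cap C(\overline{\Omega}_{\mathcal{I}})$ satisfies $L[w] > 0$, and suppose for contradiction that $w$ attains its minimum over $\overline{\Omega}_{\mathcal{I}}$ at a point $(\mathbf{x}_0,t_0)$ with $\mathbf{x}_0 \in \Omega$ and $t_0 \in (0,\mathcal{I}]$, i.e. off the parabolic boundary $\Gamma_{\mathcal{I}}$. Since $\mathbf{x}_0$ is an interior spatial minimizer, the first-order conditions give $\mathrm{grad}[w](\mathbf{x}_0,t_0) = \mathbf{0}$ and the spatial Hessian $\mathrm{Hess}_{\mathbf{x}}[w](\mathbf{x}_0,t_0)$ is positive semidefinite. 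Writing the divergence as $\mathrm{div}[\mathbf{D}\,\mathrm{grad}[w]] = (\mathrm{div}\,\mathbf{D})\cdot\mathrm{grad}[w] + \mathbf{D}:\mathrm{Hess}_{\mathbf{x}}[w]$, the first term vanishes because $\mathrm{grad}[w]=\mathbf{0}$, while the second is nonnegative because $\mathbf{D}$ is symmetric positive definite by the uniform ellipticity \eqref{Eqn:Helmholtz_positive_definiteness_D} and the Hessian is positive semidefinite; indeed $\mathbf{D}:\mathrm{Hess}_{\mathbf{x}}[w] = \mathrm{tr}(\mathbf{D}^{1/2}\,\mathrm{Hess}_{\mathbf{x}}[w]\,\mathbf{D}^{1/2}) \geq 0$. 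For the temporal condition, $\partial w/\partial t(\mathbf{x}_0,t_0)=0$ if $t_0 \in (0,\mathcal{I})$, whereas if $t_0=\mathcal{I}$ minimality forces only the one-sided sign $\partial w/\partial t(\mathbf{x}_0,\mathcal{I}) \leq 0$. In either case $L[w](\mathbf{x}_0,t_0) \leq 0$, contradicting $L[w]>0$; hence the minimum of $w$ lies on $\Gamma_{\mathcal{I}}$.

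\emph{Perturbation and limit.} Given only $L[c] \geq 0$, set $w_\epsilon := c + \epsilon t$ for $\epsilon>0$. Because $\epsilon t$ is spatially constant, $L[w_\epsilon] = L[c] + \epsilon \geq \epsilon > 0$, so the strict case yields $\min_{\overline{\Omega}_{\mathcal{I}}} w_\epsilon = \min_{\Gamma_{\mathcal{I}}} w_\epsilon$. Since $\|w_\epsilon - c\|_{\infty} \leq \epsilon\,\mathcal{I} \to 0$, the functions $w_\epsilon$ converge uniformly to $c$ on the compact set $\overline{\Omega}_{\mathcal{I}}$, and the minimum over a fixed compact set is continuous under uniform convergence; letting $\epsilon \to 0^+$ on both sides gives $\min_{\overline{\Omega}_{\mathcal{I}}} c = \min_{\Gamma_{\mathcal{I}}} c$, which is \eqref{Eqn:TransientDMP_Maximum_principle}.

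The main obstacle is the treatment of the ``top lid'' $\{t=\mathcal{I}\}\cap\Omega$, which is deliberately \emph{excluded} from $\Gamma_{\mathcal{I}}$: a candidate interior minimizer sitting there cannot be ruled out by a vanishing time-derivative, and one must instead exploit the correct one-sided sign $\partial w/\partial t \leq 0$. This asymmetry in $t$ is exactly what separates the parabolic principle from its elliptic counterpart and is the step most easily mishandled. The only other point needing mild care is the anisotropic contraction $\mathbf{D}:\mathrm{Hess}_{\mathbf{x}}[w] \geq 0$, dispatched by the symmetric factorization $\mathbf{D}=\mathbf{D}^{1/2}\mathbf{D}^{1/2}$; the hypothesis $\mathbf{D}\in C^1$ is invoked only to license the pointwise expansion of the divergence.
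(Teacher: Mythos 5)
The paper does not actually prove Theorem \ref{Theorem:TransientDMP_MP_parabolic}: its ``proof'' is a one-line pointer to standard references (Protter--Weinberger, McOwen, Evans). Your proposal supplies the classical argument that those references contain --- rule out an interior minimizer under a strict differential inequality via the first- and second-derivative tests, then pass from $L[c] \geq 0$ to the general case with the perturbation $c + \epsilon t$ --- and the substantive steps are correct: the contraction $\mathbf{D}:\mathrm{Hess}_{\mathbf{x}}[w] \geq 0$ via the symmetric square root is right, the observation that $\epsilon t$ is annihilated by the spatial operator so that $L[w_\epsilon] = L[c] + \epsilon$ is right, and the passage to the limit using $|\min f - \min g| \leq \|f-g\|_{\infty}$ on the compact set $\overline{\Omega}_{\mathcal{I}}$ is right. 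Relative to the paper you are doing strictly more, and you correctly isolate the genuinely parabolic feature, namely the one-sided treatment of the top lid.

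One technical point needs repair. You handle a minimizer at $t_0 = \mathcal{I}$ by invoking the one-sided sign $\partial w/\partial t(\mathbf{x}_0,\mathcal{I}) \leq 0$ together with $L[w](\mathbf{x}_0,\mathcal{I}) > 0$. But the paper defines $\Omega_{\mathcal{I}} := \Omega \times (0,\mathcal{I})$ as an \emph{open} cylinder, and the hypotheses give only $c \in C^{2}_{1}(\Omega_{\mathcal{I}}) \cap C(\overline{\Omega}_{\mathcal{I}})$ with the differential inequality holding in $\Omega_{\mathcal{I}}$: neither the existence of $\partial w/\partial t$ nor the inequality $L[w] > 0$ is licensed at $t = \mathcal{I}$. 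The standard fix is to run your strict-case argument on $\overline{\Omega} \times [0,\tau]$ for $\tau < \mathcal{I}$, where a minimizer on the lid $t = \tau$ lies inside $\Omega_{\mathcal{I}}$ so that all derivatives and the strict inequality are available (and the one-sided bound $\partial w/\partial t(\mathbf{x}_0,\tau) \leq 0$, obtained by approaching from below, still applies), and then let $\tau \uparrow \mathcal{I}$ by continuity of $c$ on the compact closure before, or together with, sending $\epsilon \to 0^{+}$. With that truncation inserted the argument is complete. Your added assumption $\mathbf{D} \in C^{1}$ is a genuine strengthening of the paper's stated hypotheses, but some such regularity is unavoidable for a pointwise classical argument and is implicit in the references the paper cites.
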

\begin{proof}
  A proof can be found in standard books on partial differential 
  equations (e.g., see \cite{Protter_Weinberger,McOwen,Evans_PDE}).
\end{proof}

%-------------------------------------;
%  Remark: Maximum-minimum principle  ;
%-------------------------------------;
\begin{remark}
  The above maximum principle implies that if one has volumetric 
  source everywhere and at all times (i.e., $f(\mathbf{x},t) 
  \geq 0$) then the minimum will occur on the boundary of the 
  domain or in the initial condition. A logically equivalent 
  statement of the above theorem can be written as follows: 
  If $c(\mathbf{x},t)$ satisfies $\partial c/\partial t - 
  \mathrm{div}[\mathbf{D}(\mathbf{x})\mathrm{grad}[c]] 
  \leq 0$, the maximum occurs on the parabolic boundary. 
  That is, 
  %-------------------------------------;
  %  Equation: Maximum on the boundary  ;
  %-------------------------------------;
  \begin{align}
    \mathop{\mathrm{max}}_{(\mathbf{x},t) \in \overline{\Omega}_{\mathcal{I}}} 
    \; c(\mathbf{x},t) = \mathop{\mathrm{max}}_{(\mathbf{x},t) \in 
      \Gamma_{\mathcal{I}}} \; c(\mathbf{x},t)
  \end{align}
\end{remark}

Maximum principles play a central role in the study of 
partial differential equations. Many uniqueness theorems 
and powerful estimates for elliptic and parabolic partial 
differential equations utilize some form of maximum principles 
\cite{Gilbarg_Trudinger,Pao}. Maximum principles also have 
important physical implications in mathematical modeling, 
as they place restrictions on physical quantities. One 
such implication is the non-negative constraint. We now 
show that, under certain assumptions, the non-negative 
constraint is a consequence of the maximum principle 
given by Theorem \ref{Theorem:TransientDMP_MP_parabolic}.
For the present discussion, let us assume that $\Gamma^{\mathrm{D}} 
= \partial \Omega$ (that is, we prescribe Dirichlet boundary 
conditions on the whole boundary). If $f(\mathbf{x},t) \geq 0$ 
(i.e., we have volumetric source), $c_p(\mathbf{x},t) \geq 0$ 
(i.e., we have non-negative prescribed Dirichlet boundary 
conditions on the whole boundary), and $c_{0}(\mathbf{x}) 
\geq 0$ (i.e., we have non-negative prescribed initial 
concentration); then the maximum principle given by 
Theorem \ref{Theorem:TransientDMP_MP_parabolic} implies 
that the quantity $c(\mathbf{x},t)$ is non-negative in 
the whole domain and at all times. That is, 
%-------------------------------------;
%  Equation: Non-negative constraint  ;
%-------------------------------------;
\begin{align}
  c(\mathbf{x},t) \geq 0 \quad \forall \mathbf{x} 
  \in \overline{\Omega} \; \mathrm{and} \; \forall t 
  \in [0, \mathcal{I}]
\end{align}
It should be noted that the above discussion on maximum 
principles and the non-negative constraint is in continuum 
setting. For most practical problems (which will involve 
complex geometries and spatially varying coefficients), 
it is not possible to find analytical solutions. Therefore, 
one has to resort to numerical solutions. This leads to 
the following questions, which are central to this paper. 
\emph{
Whether numerical formulations satisfy maximum principles 
and the non-negative constraint for transient diffusion 
equation. If so, under what conditions? If not, is it 
possible to fix a given numerical formulation to meet 
these important principles?} This area of research is 
popularly referred to as discrete maximum principles. 

%------------------------------------;
%  Remark: Remark on recent efforts  ;
%------------------------------------;
\begin{remark}
  Some recent efforts \cite{Liska_Shashkov_CiCP_2008_v3_p852,
    Nakshatrala_Valocchi_JCP_2009_v228_p6726,
    Nagarajan_Nakshatrala_IJNMF_2011_v67_p820} have 
  addressed similar questions with respect to maximum 
  principles and the non-negative constraint, but all 
  these studies have considered \emph{steady} diffusion 
  equation. 
\end{remark}

%===========================================;
%  Subsection: Discrete maximum principles  ;
%===========================================;
\subsection{Discrete maximum principles}
The discrete analogy of maximum principles is commonly 
referred to as \emph{discrete maximum principles} (DMP). 
Some main factors which affect numerical solutions with 
respect to discrete maximum principles are:
\begin{enumerate}[(i)]
\item topology of the domain (e.g., shape of the 
  domain, features like holes in the domain), 
\item type of mesh (e.g., Delaunay, well-centered, 
  structured vs. unstructured), 
\item element type (simplicial vs. non-simplicial elements), 
\item mesh size (i.e., aspect ratio), 
\item medium properties (e.g., anisotropy, heterogeneity), 
\item order of approximation (i.e., low-order vs. 
  high-order), and 
\item temporal discretization (e.g., time 
  stepping scheme, selection of the time step). 
\end{enumerate}
The first six factors are equally applicable to steady 
anisotropic diffusion equation. Systematic studies on 
the effect of first five factors on maximum principles 
and the non-negative constraint can be found in references 
\cite{Nakshatrala_Valocchi_JCP_2009_v228_p6726,
  Nagarajan_Nakshatrala_IJNMF_2011_v67_p820,
  Mudunuru_Nakshatrala_IJNME_2012_v89_p1144}. Reference 
\cite{Payette_Nakshatrala_Reddy_IJNME_2012} discusses 
in detail about the sixth factor. The last factor (in 
combination with other six factors) is the subject 
matter of this paper. 

This leads to the problem statement of this paper: 
\emph{Develop a finite element methodology for linear 
  transient tensorial diffusion equation that satisfies 
  maximum principles and the non-negative constraint 
  on general computational grids for low-order finite 
  elements with no additional restrictions on the time 
  step.} 
To the best of our knowledge, such a methodology does not 
exist in the literature. In the next section, we shall 
extend the optimization-based methodologies that are 
presented in references \cite{Nakshatrala_Valocchi_JCP_2009_v228_p6726,
  Nagarajan_Nakshatrala_IJNMF_2011_v67_p820} for steady diffusion 
equations to transient diffusion equation. We shall explicitly 
enforce constraints on the nodal concentrations to satisfy 
maximum principles and the non-negative. We shall restrict 
to low-order finite elements, which include two-node line 
element, three-node triangular element, four-node quadrilateral 
element, four-node tetrahedron element, eight-node brick element, 
and six-node wedge element. 
However, it should be noted that the proposed methodology is 
\emph{not} applicable to high-order elements, as enforcing 
non-negative constraints at nodes does not imply non-negative 
concentrations throughout the domain for high-order elements 
(e.g., three-node line element, six-node triangular element) 
\cite{Payette_Nakshatrala_Reddy_IJNME_2012}.

%*****************************************;
%                                         ;
%  NAME                                   ;
%    S3_TransientDMP_Derivation.tex       ;
%                                         ;
%  WRITTEN BY                             ;
%    Kalyana Babu Nakshatrala             ;
%                                         ;
%*****************************************;
\section{PROPOSED METHODOLOGY: DERIVATION AND IMPLEMENTATION DETAILS}
\label{Sec:TransientDMP_Derivation}
Herein, we shall employ the method of horizontal lines (also known 
as the Rothe method) \cite{Rothe_MA_1930_v102_p650} as opposed to 
the commonly employed method of vertical lines \cite{Hughes}. The 
method of horizontal lines is a discretization sequence in which 
the time is discretized first followed by spatial discretization. 
To this end, we shall define two sets of time levels: \emph{integral} 
and \emph{weighted} time levels. The time interval of interest $[0, 
  \mathcal{I}]$ is divided into $N$ non-overlapping subintervals 
such that 
%---------------------------------------;
%  Equation: Division of time interval  ;
%---------------------------------------;
\begin{align}
  [0, \mathcal{I}] = \bigcup_{n = 1}^{N} [t_{n-1}, t_n]
\end{align}
where $t_n \; (n = 0, \cdots, N)$ are referred to 
as integral time levels. For convenience, we shall 
assume that the time step $\Delta t$ to be uniform, 
which implies that 
%-------------------------------;
%  Equation: Uniform time step  ;
%-------------------------------;
\begin{align}
  \Delta t = \frac{\mathcal{I}}{N} \; \mathrm{and} 
  \; t_n = n \Delta t
\end{align}
However, it should be noted that the proposed 
methodology can be easily extended to non-uniform 
time steps. We shall apply the method of horizontal 
lines at weighted time levels, which are defined as 
follows:
%----------------------------------;
%  Equation: Weighted time levels  ;
%----------------------------------;
\begin{align}
  t_{n+\eta} := (1 - \eta) t_n + \eta t_{n+1}
\end{align}
where the parameter $\eta \in [0, 1]$. The 
concentration and its rate at integral time 
levels are respectively denoted as follows:
%-------------------------------------------------------------;
%  Equation: Notation for quantities at integral time levels  ;
%-------------------------------------------------------------;
\begin{subequations}
  \begin{align}
    c^{(n)}(\mathbf{x}) &= c(\mathbf{x}, t = t_n) \\
    v^{(n)}(\mathbf{x}) &= \frac{\partial c}{\partial t} 
    (\mathbf{x}, t = t_n)
  \end{align}
\end{subequations}
The following notation is used to denote quantities at 
weighted time levels: 
%------------------------------------------------;
%  Equation: Quantities at weighted time levels  ;
%------------------------------------------------;
\begin{subequations}
  \label{Def:TransientDMP_time_step_definitions}
  \begin{align} 
    c^{(n+\eta)}(\mathbf{x}) &:= (1 - \eta) c^{(n)}(\mathbf{x}) 
    + \eta c^{(n+1)}(\mathbf{x}) \approx c(\mathbf{x},t_{n+\eta}) \\ 
    v^{(n+\eta)}(\mathbf{x}) &:= (1 - \eta) v^{(n)}(\mathbf{x}) 
    + \eta v^{(n+1)}(\mathbf{x}) \approx \frac{\partial c}{\partial t}
    (\mathbf{x},t = t_{n+\eta}) \\
    c_p^{(n+\eta)}(\mathbf{x}) &:= c_p(\mathbf{x},t_{n+\eta}) \\
    f^{(n+\eta)}(\mathbf{x}) &:= f(\mathbf{x},t_{n+\eta}) \\
    q_p^{(n+\eta)}(\mathbf{x}) &:= q_p(\mathbf{x},t_{n+\eta}) 
  \end{align}
\end{subequations}

%==========================;
%  Subsection: Derivation  ;
%==========================;
\subsection{Derivation}
In designing the proposed methodology, attention will be 
exercised on two different aspects. The first aspect is 
to make sure that the non-negative constraint and maximum 
principles are preserved after both temporal and spatial 
discretizations. The second aspect is to achieve numerical 
stability in solving the resulting differential-algebraic 
equations. 
As we shall see in subsection \ref{Sec:TransientDMP_proposed_formulation}, 
we will be adding additional equations in the form of lower and upper 
bounds (i.e., inequality constraints). This implies that we will be 
dealing with differential-algebraic equations. It is important to 
note that numerical time integration schemes that are designed for 
ordinary differential equations may not be stable and accurate for 
solving differential-algebraic equations. This point has been 
discussed adequately in the literature (e.g., see references 
\cite{Ascher_Petzold,Hairer_Lubich_Roche,Hairer_Wanner}). An 
important work on numerical time integration of differential-algebraic 
equations is by Petzold \cite{Petzold_SIAMJSciStatComp_1982_v3_p367}, 
and the title of this paper (``Differential/algebraic equations are 
not ODEs'') succinctly summarizes the above discussion.

We shall employ the generalized-$\alpha$ method for temporal 
discretization. The generalized-$\alpha$ method was first 
proposed for second-order transient systems in Reference 
\cite{Chung_Hulbert_JAM_1993_v60_p371}, and later modified 
for first-order transient systems in Reference 
\cite{Jansen_Whiting_Hulbert_CMAME_2000_v190_p305}. It 
will be shown that the entire family of time stepping 
schemes under the generalized-$\alpha$ method may not 
be made to satisfy maximum principles and the 
non-negative constraint. The following derivation 
will reveal that only a subset of time stepping 
schemes satisfying $\alpha_f = 1$ and $\alpha_m 
= \gamma \in (0, 1]$ will be suitable. 
%-------------------------------------------------;
%  Remark: Remark about limitation of alpha_f =1  ;
%-------------------------------------------------;
  \begin{remark}
    The elimination of some time stepping schemes under 
    the generalized-$\alpha$ family to meet maximum 
    principles should not be considered as a limitation 
    of the proposed methodology. It should be interpreted 
    that some time stepping schemes are not suitable to 
    enforce maximum principles and the non-negative 
    constraint. This in similar to fact that all the time 
    stepping schemes under the Newmark family are not 
    energy preserving \cite{Hughes}. If one want to 
    preserve energy and employ a time stepping scheme 
    from the Newmark family, the only choice will be 
    the trapezoidal rule. Similarly, one can employ 
    other time stepping schemes (i.e., time stepping 
    schemes not satisfying $\alpha_f = 1$ and $\alpha_m 
    = \gamma$) to solve transient diffusion equations, 
    but the numerical results need not satisfy maximum 
    principles and the non-negative constraint.
  \end{remark}

After applying the generalized-$\alpha$ method 
to the governing equations 
\eqref{Eqn:TransientDMP_PDE}--\eqref{Eqn:TransientDMP_Neumann}, 
we obtain the following equations:
%--------------------------------------------------------;
%  Equation: Time discretization at weighted time level  ; 
%--------------------------------------------------------;
\begin{subequations}
  \begin{alignat}{2}
    \label{Eqn:Transient_DMP_PDE_time_discrete}
    &v^{(n+\alpha_m)}(\mathbf{x}) - \mathrm{div}[\mathbf{D}
      (\mathbf{x})\mathrm{grad}[c^{(n+\alpha_f)}]] = 
    f^{(n+\alpha_f)}(\mathbf{x}) &\quad& \mathrm{in} 
    \; \Omega \\
    &c^{(n+\alpha_f)}(\mathbf{x}) = c_p^{(n+\alpha_f)}
    (\mathbf{x}) &\quad& \mathrm{on} \; 
    \Gamma^{\mathrm{D}} \\
    &\mathbf{\hat{n}}(\mathbf{x}) \cdot \mathbf{D}(\mathbf{x}) 
    \mathrm{grad}[c^{(n+\alpha_f)}] = q_p^{(n+\alpha_f)}(\mathbf{x}) 
    &\quad& \mathrm{on} \; \Gamma^{\mathrm{N}}
  \end{alignat}
\end{subequations}
where the parameters $\alpha_m, \; \alpha_f \in 
[0, 1]$. In addition, we have the following 
relationship: 
%--------------------------------------------;
%  Equation: Generalized-alpha relationship  ;
%--------------------------------------------;
\begin{align}
  c^{(n+1)}(\mathbf{x}) = c^{(n)}(\mathbf{x}) + 
  \Delta t \left((1 - \gamma)v^{(n)}(\mathbf{x}) 
  + \gamma v^{(n+1)}(\mathbf{x})\right)
\end{align}
where the parameter $\gamma \in [0, 1]$. The 
initial condition takes the following form:
%-------------------------------;
%  Equation: Initial condition  ;
%-------------------------------;
\begin{align}
  c^{(0)}(\mathbf{x}) = c_0(\mathbf{x}) 
  \quad \mathrm{in} \; \Omega
\end{align}
%--------------------------------------;
%  Remark: Remark about Euler schemes  ;
%--------------------------------------;
\begin{remark}
  Many popular time stepping schemes are special case 
  of generalized-$\alpha$ method. For example, forward 
  Euler $(\alpha_m = 1, \alpha_f = 1, \gamma = 0)$, 
  trapezoidal rule $(\alpha_m = 1, \alpha_f = 1, 
  \gamma = 1/2)$, and backward Euler $(\alpha_m = 1, 
  \alpha_f = 1, \gamma = 1)$. 
\end{remark}

Herein, we shall take $\alpha_m = \gamma$. This selection is 
intended to inherit the non-negative property for the resulting 
time discrete equations. The time discrete equations in terms 
of concentration take the following form: Find $c^{(n+\alpha_f)}
(\mathbf{x})$ such that we have
%---------------------------------------------------;
%  Equation: Time discrete equations in terms of c  ;
%---------------------------------------------------;
\begin{subequations}
  \label{Eqn:Transient_DMP_diffusion_with_decay_BVP}
  \begin{alignat}{2}
    \label{Eqn:Transient_DMP_diffusion_with_decay}
    &\frac{1}{\alpha_f \Delta t} c^{(n+\alpha_f)}
    (\mathbf{x}) - \mathrm{div}[\mathbf{D}(\mathbf{x})
      \mathrm{grad}[c^{(n+\alpha_f)}]] = f^{(n+\alpha_f)}
    (\mathbf{x}) + \frac{1}{\alpha_f \Delta t} 
    c^{(n)}(\mathbf{x}) &\quad& \mathrm{in} \; \Omega \\
    \label{Eqn:Transient_DMP_diffusion_with_decay_Dirichlet}
    &c^{(n+\alpha_f)}(\mathbf{x}) = c_p^{(n+\alpha_f)}
      (\mathbf{x}) &\quad& \mathrm{on} \; 
      \Gamma^{\mathrm{D}} \\
    \label{Eqn:Transient_DMP_diffusion_with_decay_Neumann}
    &\mathbf{\hat{n}}(\mathbf{x}) \cdot \mathbf{D}(\mathbf{x}) 
    \mathrm{grad}[c^{(n+\alpha_f)}] = q_p^{(n+\alpha_f)}(\mathbf{x}) 
    &\quad& \mathrm{on} \; \Gamma^{\mathrm{N}}
  \end{alignat}
\end{subequations}
The above boundary value problem 
\eqref{Eqn:Transient_DMP_diffusion_with_decay}--\eqref{Eqn:Transient_DMP_diffusion_with_decay_Neumann} is a second-order inhomogeneous 
elliptic partial differential equation with Dirichlet 
and Neumann boundary conditions. Specifically, equation 
\eqref{Eqn:Transient_DMP_diffusion_with_decay} is the 
well-known steady-state anisotropic diffusion equation 
with decay, as $\alpha_f \Delta t$ will be always 
positive. The decay coefficient can be identified 
as $1/(\alpha_f \Delta t)$, and the volumetric 
source term is $f^{(n+\alpha_f)}(\mathbf{x}) + \frac{1}
{\alpha_f \Delta t}c^{(n)}(\mathbf{x})$. This boundary 
value problem is also known to satisfy maximum principles 
and the non-negative constraint. The selection $\alpha_m 
= \gamma$ made it possible to preserve maximum principles 
and the non-negative constraint by ensuring the decay 
coefficient to be positive, and the volumetric source 
at discrete time levels to be non-negative. 

%-------------------------------------------;
%  Remark: Remark about Helmholtz equation  ;
%-------------------------------------------;
It should be emphasized that an arbitrary temporal 
discretization will not preserve maximum principles 
and the non-negative constraint. An important aspect 
is to ensure is that the resulting equation after 
a temporal discretization of transient diffusion 
equation \eqref{Eqn:TransientDMP_PDE} is a diffusion 
equation with decay instead of a Helmholtz equation. 
Diffusion equation with decay takes the following form:
%-------------------------------------------;
%  Equation: Diffusion equation with decay  ;
%-------------------------------------------;
\begin{align}
  \alpha(\mathbf{x}) c(\mathbf{x}) - \mathrm{div}
        [\mathbf{D}(\mathbf{x}) \mathrm{grad}[c]] 
        = f(\mathbf{x})
\end{align}
with $\alpha(\mathbf{x}) \geq 0$. If $\alpha(\mathbf{x}) < 0$, 
the equation is referred to as Helmholtz equation. It should 
be noted that Helmholtz equation does not have a maximum 
principle similar to the one possessed by diffusion equation 
with decay \cite{Gilbarg_Trudinger}. Hence, in order to 
preserve maximum principles and the non-negative constraint, 
the temporal discretization based on the method of horizontal 
lines should be carried out in such a way that the resulting 
decay coefficient is non-negative.
In Appendix, we shall outline several other ways 
of carrying out temporal discretization, and 
discuss the drawbacks of such approaches in 
meeting maximum principles and the non-negative 
constraint. 

Recently, Nagarajan and Nakshatrala 
\cite{Nagarajan_Nakshatrala_IJNMF_2011_v67_p820} have 
proposed a procedure for enforcing maximum principles 
and the non-negative constraint for steady diffusion 
with decay equation, which we shall modify to solve 
equations 
\eqref{Eqn:Transient_DMP_diffusion_with_decay}--\eqref{Eqn:Transient_DMP_diffusion_with_decay_Neumann}. 
We start by applying Galerkin formalism to equations 
\eqref{Eqn:Transient_DMP_diffusion_with_decay}--\eqref{Eqn:Transient_DMP_diffusion_with_decay_Neumann}. The corresponding weak form takes 
the following form: Find $c^{(n+\alpha_f)}(\mathbf{x}) \in 
\mathcal{P}_{n+\alpha_f}$ such that we have 
%-----------------------------------------------------------;
%  Equation: Galerkin formulation for diffusion with decay  ;
%-----------------------------------------------------------;
\begin{align}
  \label{Eqn:Transient_DMP_weak_form}
  \int_{\Omega} w(\mathbf{x}) \frac{1}{\alpha_f \Delta t} 
  c^{(n+\alpha_f)}(\mathbf{x}) \; \mathrm{d} \Omega &+ 
  \int_{\Omega} \mathrm{grad}[w] \cdot \mathbf{D} 
  (\mathbf{x}) \mathrm{grad}\left[c^{(n+\alpha_f)}\right] 
  \; \mathrm{d} \Omega \nonumber \\
  &= \int_{\Omega} w(\mathbf{x}) f^{(n+\alpha_f)}
  (\mathbf{x}) \; \mathrm{d} \Omega + 
  \int_{\Omega} w(\mathbf{x}) \frac{1}{\alpha_f \Delta t} 
  c^{(n)}(\mathbf{x}) \; \mathrm{d} \Omega \nonumber \\
  &+ \int_{\Gamma^{\mathrm{N}}} w(\mathbf{x}) 
  q_p^{(n+\alpha_f)}(\mathbf{x}) \; \mathrm{d} \Gamma 
  \quad \forall w(\mathbf{x}) \in \mathcal{Q}
\end{align}
where the function spaces $\mathcal{P}_{n+\alpha_f}$ 
and $\mathcal{Q}$ are defined as follows:
%--------------------------------------;
%  Equation: Functions spaces P and Q  ;
%--------------------------------------;
\begin{subequations}
  \begin{align}
    \label{Eqn:Transient_DMP_Pngamma_space}
    &\mathcal{P}_{n+\alpha_f} := \left\{c(\mathbf{x}) 
    \in H^{1}(\Omega) \; \big| \; c(\mathbf{x}) = 
    c_p^{(n+\alpha_f)}(\mathbf{x}) \; \mathrm{on} \; 
    \Gamma^{\mathrm{D}} \right\} \\
    \label{Eqn:Transient_DMP_Q_space}
    &\mathcal{Q} := \left\{w(\mathbf{x}) \in H^{1}
    (\Omega) \; \big| \; w(\mathbf{x}) = 0 \; 
    \mathrm{on} \; \Gamma^{\mathrm{D}}\right\}
  \end{align}
\end{subequations}
After executing the usual steps of the 
finite element method, the above weak 
form \eqref{Eqn:Transient_DMP_weak_form} 
can be converted to a system of linear 
equations of the following form:
%------------------------------------;
%  Equation: Solve linear equations  ;
%------------------------------------;
\begin{align}
  \label{Eqn:Transient_DMP_system_of_LE}
  \boldsymbol{K} \boldsymbol{c}^{(n+\alpha_f)} 
  = \boldsymbol{f}^{(n+\alpha_f)}
\end{align}
where ``$ndofs$'' denotes the number of (free) degrees-of-freedom, 
$\boldsymbol{c}^{(n+\alpha_f)} \in \mathbb{R}^{ndofs}$ denotes the 
unknown vector containing nodal concentrations at the weighted 
time level $t_{n+\alpha_f}$, $\boldsymbol{f}^{(n+\alpha_f)} \in 
\mathbb{R}^{ndofs}$ is a known vector, and $\boldsymbol{K}$ 
is a symmetric and positive definite matrix. 
It will be shown in a subsequent section that the finite element 
solution obtained by solving the system of linear equations 
\eqref{Eqn:Transient_DMP_system_of_LE} may not satisfy 
maximum principles and the non-negative constraint. Using 
optimization-based techniques, we now modify the above 
solution procedure to meet these important physical 
constraints. 

%============================================================================;
%  Subsection: Enforcing maximum principles and the non-negative constraint  ;
%============================================================================;
\subsection{Enforcing maximum principles and the non-negative constraint}
\label{Sec:TransientDMP_proposed_formulation}
We shall denote the standard inner product on finite dimensional 
Euclidean spaces by $\langle \cdot ; \cdot \rangle$. We shall 
use the symbols $\preceq$ and $\succeq$ to denote component-wise 
inequalities for vectors. That is, for given any two (finite 
dimensional) vectors $\boldsymbol{a}$ and $\boldsymbol{b}$ 
%-----------------------------------------;
%  Equation: Definition of preceq symbol  ;
%-----------------------------------------;
\begin{align}
  \boldsymbol{a} \preceq \boldsymbol{b} 
  \quad \mbox{means that} \quad a_i \leq 
  b_i \; \forall i
\end{align}
Similarly, one can define the symbol $\succeq$. 
The optimization problem can then be written as 
follows: 
%----------------------------------;
%  Equation: Optimization problem  ;
%----------------------------------;
\begin{subequations}
  \label{Eqn:Transient_DMP_optimization}
  \begin{align}
    \label{Eqn:Transient_DMP_objective}
    &\mathop{\mathrm{minimize}}_{\boldsymbol{c}^{(n+\alpha_f)} 
      \in \mathbb{R}^{ndofs}} \quad \frac{1}{2} \left<
    \boldsymbol{c}^{(n+\alpha_f)}; \boldsymbol{K} 
    \boldsymbol{c}^{(n+\alpha_f)}\right> - \left< 
    \boldsymbol{c}^{(n+\alpha_f)}; 
    \boldsymbol{f}^{(n+\alpha_f)} \right> \\
    \label{Eqn:Transient_DMP_constraint}
    &\mbox{subject to} \quad c_{\mathrm{min}}^{(n+\alpha_f)} 
    \boldsymbol{1} \preceq \boldsymbol{c}^{(n+\alpha_f)} 
    \preceq c_{\mathrm{max}}^{(n+\alpha_f)} \boldsymbol{1}
  \end{align}
\end{subequations}
where $\boldsymbol{1}$ is a vector containing ones of 
size $ndofs \times 1$, and $c_{\mathrm{min}}^{(n+\alpha_f)}$ 
and $c_{\mathrm{max}}^{(n+\alpha_f)}$ are respectively the 
lower and upper bounds. For enforcing maximum principles, 
$c_{\mathrm{min}}^{(n+\alpha_f)}$ and $c_{\mathrm{max}}^{(n+\alpha_f)}$ 
can be taken as follows:
%---------------------------------------------;
%  Equation: Definitions for c_min and c_max  ;
%---------------------------------------------;
\begin{subequations}
  \label{Eqn:Transient_DMP_constraints_ngamma}
  \begin{align}
    \label{Eqn:Transient_DMP_cmin_ngamma}
    c_{\mathrm{min}}^{(n+\alpha_f)} &:= \mathrm{min} 
    \left\{\min_{\mathbf{x} \in \Omega} \; c_{0}(\mathbf{x}), 
    \; \min_{\mathbf{x} \in \partial \Omega} \; c_{p}^{(n+\alpha_f)} 
    (\mathbf{x})\right\} \\
    \label{Eqn:Transient_DMP_cmax_nalpha_f}
    c_{\mathrm{max}}^{(n+\alpha_f)} &:= \mathrm{max} 
    \left\{\max_{\mathbf{x} \in \Omega} \; c_{0}(\mathbf{x}), 
    \; \max_{\mathbf{x} \in \partial \Omega} \; c_{p}^{(n+\alpha_f)} 
    (\mathbf{x})\right\}
  \end{align}
\end{subequations}
For problems involving only the non-negative 
constraint, one can employ the following:
%-------------------------------------;
%  Equation: Non-negative constraint  ;
%-------------------------------------;
\begin{align}
  \label{Eqn:Transient_DMP_non_negative_constraint}
  c_{\mathrm{min}}^{(n+\alpha_f)} = 0 \; \mathrm{and} \; 
  c_{\mathrm{max}}^{(n+\alpha_f)} = +\infty
\end{align}
Alternatively, for enforcing the non-negative 
constraint, one can replace the constraint 
\eqref{Eqn:Transient_DMP_constraint} with 
the following:
%--------------------------------------------;
%  Equation: Non-negative vector constraint  ;
%--------------------------------------------;
\begin{align}
  \label{Eqn:Transient_DMP_non_negative_vector_constraint}
  \mathbf{0} \preceq \boldsymbol{c}^{(n+\alpha_f)}
\end{align}
where $\boldsymbol{0}$ denotes the vector of size 
$ndofs \times 1$ containing zeros. It should be 
noted that the above optimization problem 
\eqref{Eqn:Transient_DMP_optimization} belongs 
to \emph{quadratic programming}. Since, for the 
problem at hand, the matrix $\boldsymbol{K}$ is 
positive definite (which makes the objective 
function \eqref{Eqn:Transient_DMP_objective} 
convex) the optimization problem belongs to 
\emph{convex quadratic programming}. A sound 
mathematical theory is already in place for 
studying convex quadratic programming 
\cite{Boyd_convex_optimization}, and 
several efficient algorithms are available 
in the literature \cite{Pang_CCE_1983_v7_p583,
  Ye_TSE_MP_1989_v44_p157,Boyd_convex_optimization}. 
In this paper, we shall employ the built-in optimization 
solver available in MATLAB \cite{MATLAB_2012a}. Some 
other popular packages that can handle convex 
quadratic programming optimization problems are 
GAMS \cite{GAMS}, TAO \cite{tao-user-ref}, and 
DAKOTA \cite{Dakota}.

Once the nodal concentrations are obtained at 
weighted time level, one can obtain the nodal 
concentrations at integral time levels as follows:
%----------------------------------------------------------;
%  Equation: Nodal concentrations at integral time levels  ;
%----------------------------------------------------------;
\begin{align}
  \label{Eqn:TransientDMP_nodal_concs}
  \boldsymbol{c}^{(n+1)} = \frac{\boldsymbol{c}^{(n+\alpha_f)} - 
    (1 - \alpha_f) \boldsymbol{c}^{(n)}}{\alpha_f}
\end{align}
Although $\boldsymbol{c}^{(n+\alpha_f)} \succeq \boldsymbol{0}$, 
the nodal concentrations at integral time levels based on 
equation \eqref{Eqn:TransientDMP_nodal_concs} need not be 
non-negative if $\alpha_f \neq 1$. To put it differently, 
one is assured of satisfying maximum principles and the 
non-negative constraint under the proposed methodology 
if $\alpha_{m} = \gamma \in (0, 1]$ and $\alpha_{f} = 1$. 

  %==================================================================;
  %  Subsubsection: Calculation of the rate of nodal concentrations  ;
  %==================================================================;
  \subsubsection{Calculation of the rate of nodal concentrations} 
  There are two ways one could calculate the nodal 
  rates of concentration. The first method is to 
  directly calculate the rate of nodal concentrations 
  at integral time levels using the following expression: 
  %-------------------------------------------;
  %  Equation: Rates at integral time levels  ;
  %-------------------------------------------;
  \begin{align}
    \label{Eqn:TransientDMP_rates_integral_first}
    \boldsymbol{v}^{(n+1)} = \frac{\boldsymbol{c}^{(n+1)} 
      - \boldsymbol{c}^{(n)} - (1 - \gamma) \Delta t
      \boldsymbol{v}^{(n)}}{\gamma \Delta t}
  \end{align}
  It should also be emphasized that $\gamma = 0$ cannot 
  be employed under the proposed methodology to meet 
  maximum principles and the non-negative constraint.
  The second method is to first calculate the rates 
  of concentration at weighted time levels using the 
  following expression: 
  %-------------------------------------------;
  %  Equation: Rates at weighted time levels  ;
  %-------------------------------------------;
  \begin{align}
    \label{Eqn:TransientDMP_rates_weighted}
    \boldsymbol{v}^{(n+\gamma)} = \frac{\boldsymbol{c}^{(n+1)} - 
      \boldsymbol{c}^{(n)}}{\Delta t}
  \end{align}
  The rate of nodal concentrations at the integral 
  time levels can then be calculated using the 
  following consistent approximation:
  %----------------------------------------------------------;
  %  Equation: Rates at integral time levels: Second method  ;
  %----------------------------------------------------------;
  \begin{align}
    \label{Eqn:TransientDMP_rates_integral_second}
    \boldsymbol{v}^{(n+1)} = \gamma \boldsymbol{v}^{(n+\gamma)} 
    + (1 - \gamma) \boldsymbol{v}^{(n+1+\gamma)}  
  \end{align}
  Both these consistent ways of obtaining the rates 
  of concentration are pictorially described in Figure 
  \ref{Fig:Transient_DMP_weighted_quantities}.
  
  The various steps involved in the numerical implementation 
  of the proposed methodology to satisfy maximum principles 
  and the non-negative constraint are summarized in Algorithm 
  \ref{Algo:TransientDMP_implementation}, which could serve 
  as a quick reference during computer code design and 
  implementation. 

%-----------------------------------;
%  Algorithm: Proposed methodology  ;
%-----------------------------------;
\begin{algorithm}[h]
  \caption{Implementation of the proposed methodology based on $\alpha_f = 1$.}
  \label{Algo:TransientDMP_implementation}
  \begin{algorithmic}[1]
    \STATE Input: Initial condition $c(\mathbf{x})$, 
    Dirichlet boundary conditions $c_{p}(\mathbf{x},t)$, 
    Neumann boundary conditions $q_p(\mathbf{x},t)$, 
    time step $\Delta t$, total time of interest 
    $\mathcal{I}$, $\alpha_m = \gamma \in (0, 1]$.
    \STATE Construct initial nodal concentrations 
    $\boldsymbol{c}^{(0)}$
    \STATE Set $\boldsymbol{c}^{(n)} \longleftarrow 
    \boldsymbol{c}^{(0)}$, $t \longleftarrow 0$, 
    $n \longleftarrow 0$
    \WHILE{$t < \mathcal{I}$}
    \STATE Calculate $c_{\mathrm{min}}^{(n+1)}$ 
    and $c_{\mathrm{max}}^{(n+1)}$ (see equations 
    \eqref{Eqn:Transient_DMP_constraints_ngamma}--\eqref{Eqn:Transient_DMP_non_negative_constraint})
    \STATE Call non-negative solver to obtain 
    $\boldsymbol{c}^{(n+1)}$ 
    \begin{align*}
      \mathop{\mathrm{minimize}}_{\boldsymbol{c}^{(n+1)} \in \mathbb{R}^{ndofs}} 
      &\quad \frac{1}{2} \langle {\boldsymbol{c}^{(n+1)}}; \boldsymbol{K} 
      \boldsymbol{c}^{(n+1)} \rangle - \langle {\boldsymbol{c}^{(n+1)}}; 
      \boldsymbol{f}^{(n+1)} \rangle \\
      \mbox{subject to} &\quad 
      c_{\mathrm{min}}^{(n+1)} \boldsymbol{1} \preceq 
      \boldsymbol{c}^{(n+1)} \preceq c_{\mathrm{max}}^{(n+1)} 
      \boldsymbol{1}
    \end{align*}
    \STATE If needed, obtain the rate of nodal concentrations 
    at both integral and weighed time levels (see equations 
    \eqref{Eqn:TransientDMP_rates_integral_first}--\eqref{Eqn:TransientDMP_rates_integral_second} and Figure 
    \ref{Fig:Transient_DMP_weighted_quantities})
    \begin{align*}
      \boldsymbol{v}^{(n+1)} &= \frac{\boldsymbol{c}^{(n+1)} 
        - \boldsymbol{c}^{(n)} - (1 - \gamma) \Delta t 
        \boldsymbol{v}^{(n)}}{\gamma \Delta t} \\
      or \\
      \boldsymbol{v}^{(n+\gamma)} &= \frac{\boldsymbol{c}^{(n+1)} 
        - \boldsymbol{c}^{(n)}}{\Delta t} 
      \quad \mathrm{and} \quad  
      \boldsymbol{v}^{(n+1)} = \gamma \boldsymbol{c}^{(n+\gamma)} 
        + (1 - \gamma) \boldsymbol{c}^{(n+1+\gamma)} 
    \end{align*}
    \STATE Set $\boldsymbol{c}^{(n)} \longleftarrow 
    \boldsymbol{c}^{(n+1)}$, $t \longleftarrow t + 
    \Delta t$, $n \longleftarrow n + 1$
    \ENDWHILE
  \end{algorithmic}
\end{algorithm}

%*********************************************;
%                                             ;
%  Section                                    ;
%    S4_TransientDMP_NR.tex                   ;
%                                             ;
%  WRITTEN BY                                 ;
%    Kalyana Babu Nakshatrala                 ;
%                                             ;
%*********************************************;
\section{REPRESENTATIVE NUMERICAL RESULTS}
\label{Sec:TransientDMP_NR}
In this section, we shall illustrate the performance of 
the proposed methodology for enforcing maximum principles 
and the non-negative constraint using several canonical 
problems. We shall also perform numerical convergence 
studies on the proposed methodology. We shall restrict 
our numerical studies to one- and two-dimensional problems. 
It should be, however, noted that the proposed methodology 
is equally applicable for solving three-dimensional problems. 
We do not solve any three-dimensional problem here as, in 
comparison with one- and two-dimensional problems, there 
are no additional difficulties other than the usual book 
keeping that is associated with most three-dimensional 
problems. In all our numerical simulations we have 
employed low-order finite elements, and have taken 
$\alpha_f = 1$. It is assumed that $\alpha_m = \gamma 
= 1$, unless stated otherwise. The specific selection 
of $\gamma$ does not appear in the calculation of nodal 
concentrations. But it will be needed to calculate the rate 
of nodal concentrations, which is discussed in the previous 
section. 

%======================================================================;
%  Subsection: One-dimensional problem with uniform initial condition  ;
%======================================================================;
\subsection{One-dimensional problem with uniform initial condition}
\label{Subsec:TransientDMP_uniform_IC}
The following one-dimensional problem is taken from Reference 
\cite{Carslaw_Jaeger}, which is also used as a test problem in 
Reference \cite{Harari_CMAME_2004_v193_p1491} in the context 
of discrete maximum principles. The computational domain is 
$\Omega := (0,1)$. The governing equations of the test problem 
take the following form: 
%------------------------------------------------------;
%  Equation: One-dimensional problem for TransientDMP  ;
%------------------------------------------------------;
\begin{subequations}
  \label{Eqn:TransientDMP_1D_problem}
  \begin{align}
    \label{Eqn:TransientDMP_1D_GE}
    &\frac  {\partial{c(\mathrm{x},t)}} {\partial{t}} - 
    \frac{\partial^2{c(\mathrm{x},t)}}{\partial{\mathrm{x}^2}}  
    =  0 \quad \mathrm{in} \; \Omega_{\mathcal{I}} := (0,1) 
    \times (0, \mathcal{I}) \\
    & \frac{\partial c(\mathrm{x}=0,t)}{\partial \mathrm{x}} 
    = 0, \; c(\mathrm{x}=1,t) = 0 \quad \forall t \in 
    (0, \mathcal{I}] \\
      \label{Eqn:TransientDMP_1D_IC}
      & c(\mathrm{x},0) = 1 \quad \forall \mathrm{x} \in [0,1]
  \end{align}
\end{subequations}
The analytical solution can be written as follows:
%------------------------------------------------;
%  Equation: Analytical solution test problem 2  ;
%------------------------------------------------;
\begin{align}
  \label{Eqn:TransientDMP_1D_analytical}
  c(\mathrm{x},t) = \frac{4}{\pi} \sum_{n=0}^{\infty} 
  \frac{(-1)^n}{(2 n + 1)} \exp\left[- \frac{(2 n + 1)^2 
      \pi^2 t}{4}\right]\cos \left[\frac{(2n + 1) \pi 
      \mathrm{x}}{2}\right]
\end{align}
The analytical solution is bounded between zero and unity. In 
the numerical simulation, we have divided the computational 
domain into five equal linear finite elements, and have taken 
the time step to be $\Delta t = 0.001$ (which is chosen 
arbitrarily). Figure \ref{Fig:OneD_uniform_IC_u_x_dot6} 
compares the analytical solution with the numerical solutions 
obtained using the single-field formulation and the proposed 
methodology. The single-field formulation violates the maximum 
principle, as the obtained numerical solution is greater than unity. 
The proposed methodology satisfies the maximum principle for 
all times. 
The rate of nodal concentrations under the proposed methodology 
are shown in Figure \ref{Fig:OneD_Harari_vn_second_method} for 
various values of $\gamma = 0.1$, $0.5$ and $1.0$. Note that 
under the proposed methodology $\alpha_m = \gamma$. We 
have employed the second method for calculating the rates 
(i.e., equations 
\eqref{Eqn:TransientDMP_rates_weighted}--\eqref{Eqn:TransientDMP_rates_integral_second}).

%------------------------------------------------; 
%  Remark: Remark about compatibility condition  ;
%------------------------------------------------;
\begin{remark}
  For this problem, the initial condition is not compatible with the 
  boundary conditions. That is, the (homogeneous) Dirichlet boundary 
  condition at the right end of the domain at time $t = 0$ is not equal 
  to the initial condition. Hence, there is \emph{no} classical solution 
  to the initial boundary value problem given by equations 
  \eqref{Eqn:TransientDMP_1D_GE}--\eqref{Eqn:TransientDMP_1D_IC} in 
  the sense that $c(\mathrm{x},t) \in C^{2}_{1}(\Omega_{\mathcal{I}}) 
  \cap C(\overline{\Omega}_{\mathcal{I}})$. The analytical solution given 
  in equation \eqref{Eqn:TransientDMP_1D_analytical} should be 
  interpreted in Lebesgue measurable sense. 
\end{remark}

%==========================================================================;
%  Subsection: One-dimensional problem with non-uniform initial condition  ;
%==========================================================================;
\subsection{One-dimensional problem with non-uniform initial condition}
\label{One_dimensional_problem}
Consider the following simple one-dimensional problem with 
homogeneous forcing function. This problem is a modification 
to one of the examples given in Reference \cite{Crank_1980}. 
The initial boundary value problem can be written as follows:
%------------------------------------------------------;
%  Equation: One-dimensional problem for TransientDMP  ;
%------------------------------------------------------;
\begin{subequations}
  \label{Eqn:TransientDMP_1D_problem}
  \begin{align}
    &\frac  {\partial{c(\mathrm{x},t)}} {\partial{t}} - 
    \frac{\partial^2{c(\mathrm{x},t)}}{\partial{\mathrm{x}^2}} 
    =  0 \quad \mathrm{in} \; \Omega_{\mathcal{I}} := (0,1) \times 
    (0, \mathcal{I}) \\
    & c(\mathrm{x}=0,t) = c(\mathrm{x}=1,t) = 0 \quad 
    \forall t \in (0, \mathcal{I}] \\
    & c(\mathrm{x},0) =
    \begin{cases}
      \label{Eqn:TransientDMP_1D_problem_IC}
      1 &\text{if $ \mathrm{x} \in [a,b]$}\\
      0 &\text{otherwise}
    \end{cases}
  \end{align}
\end{subequations}
The analytical solution to the above problem is given by 
%------------------------------------;
%  Equation: 1D analytical solution  ;
%------------------------------------;
\begin{align}
  \label{Eqn:TransientDMP_1D_problem_analytical}
  c(\mathrm{x},t) = \frac{2}{\pi} \sum_{n = 1}^{\infty} 
  \frac{1}{n} (\cos(n\pi a) - \cos(n\pi b)) 
  \sin(n\pi \mathrm{x}) \exp\left[-n^{2} \pi^{2} t\right]
\end{align}
Herein, we have taken $a = 0.4$ and $b = 0.6$. 

Figure \ref{Fig:TransientDMP_1D_proposed_formulation} shows 
that the numerical solution from the proposed methodology 
compares well point-wise with the analytical solution, and 
satisfies the maximum principle and the non-negative constraint. 
Figure \ref{Fig:TransientDMP_1D_convergence} shows 
the numerical convergence of the proposed methodology 
and the standard single-field formulation in $L_2$-norm 
and $H^1$-seminorm. Note that the convergence in $L_2$-norm 
and $H^{1}$-seminorm is in integral sense and need not imply 
point-wise convergence. The convergence study is 
carried out by employing simultaneous spatial and temporal refinements 
satisfying the condition $\Delta t \propto (\Delta \mathrm{x})^2$. The 
coarsest mesh has 11 nodes, and the corresponding time step used for 
this mesh is $\Delta t = 10^{-3}$.

Figure \ref{Fig:TransientDMP_1D_min_max_conc} shows the 
variation of the minimum and maximum concentrations in 
the domain with respect to time for a fixed mesh but 
for different time steps under the standard single-field 
formulation. Note that for this problem the minimum 
concentration should be zero, and the maximum concentration 
should be unity. Clearly, the results from the standard 
single-field formulation violated both the upper and 
lower bounds. 
Figure \ref{Fig:TransientDMP_1D_nonuniform_min_max_fixed_mesh} 
shows the effect of mesh refinement for a fixed time step on 
the violation of the maximum principle under the single-field 
formulation. For a given mesh, the extent of the violation 
will be greater for smaller time steps. On the other hand, 
for a given time step, the extent of the violation decreases 
with mesh refinement, which will \emph{not} be the trend in 
the case of anisotropy. (For example, see the test problems 
given in subsections \ref{Subsec:TransientDMP_plate_with_hole} 
and \ref{Subsec:TransientDMP_2D_heterogeneous}.) 
In all the cases considered, the proposed methodology 
produced concentrations that satisfy the maximum 
principle and the non-negative constraint. 

%==========================================================================;
%  Subsection: Two-dimensional problem with non-uniform initial condition  ;
%==========================================================================;
\subsection{Two-dimensional problem with non-uniform initial condition}
\label{Two_dimensional_problem}
This test problem is a two-dimensional extension 
of the problem described earlier in subsection 
\ref{One_dimensional_problem}. The governing 
equations take the following form: 
%------------------------------------------------------;
%  Equation: Two-dimensional problem for TransientDMP  ;
%------------------------------------------------------;
\begin{subequations}
  \label{Eqn:TransientDMP_2D_problem}
  \begin{align}
    &\frac{\partial{c(\mathrm{x},\mathrm{y},t)}}{\partial{t}} 
    -  \left(\frac{\partial^2{c(\mathrm{x},\mathrm{y},t)}}{\partial{\mathrm{x}^2}}
    +\frac{\partial^2{c(\mathrm{x},\mathrm{y},t)}}{\partial{\mathrm{y}^2}}\right) = 0
    \quad \mathrm{in} \; \Omega_{\mathcal{I}} := (0,1) \times (0,1) \times (0, \mathcal{I}) \\
    & c(\mathrm{x}=0,\mathrm{y},t) = c(\mathrm{x}=1,\mathrm{y},t) = 0, \; 
    c(\mathrm{x},\mathrm{y}=0,t) = c(\mathrm{x},\mathrm{y}=1,t) = 0  \\
    & c(\mathrm{x},\mathrm{y},0) =
    \begin{cases}
      \label{Eqn:TransientDMP_2D_problem_IC}
      1 &\text{if $ \mathrm{x} \in [a,b] \times [a,b]$}\\
      0 &\text{otherwise}
    \end{cases}
  \end{align}
\end{subequations}
where $a = 0.4$ and $b = 0.6$. Figure 
\ref{Fig:TransientDMP_2D_pictorial_rep} gives a 
pictorial description of the test problem. The 
analytical solution can be written as follows:
%------------------------------------;
%  Equation: 2D analytical solution  ;
%------------------------------------;
\begin{align}
  \label{Eqn:TransientDMP_2D_problem_analytical}
  c(\mathrm{x},\mathrm{y},t) = &\frac{4}{\pi^2} 
  \sum_{m = 1}^{\infty} \sum_{n = 1}^{\infty} \frac{1}{mn} 
  (\cos(m\pi a) - \cos(m\pi b)) (\cos(n\pi a) - 
  \cos(n\pi b)) \nonumber \\
  &\sin(n\pi \mathrm{x}) \sin(m\pi \mathrm{y}) 
  \exp\left[-(m^{2}+n^{2}) \pi^{2} t\right]
\end{align}
Hierarchical meshes are employed in this numerical 
$h$-convergence study, which are illustrated in 
Figure \ref{Fig:TransientDMP_2D_convergence_meshes}.  
The numerical $h$-convergence of the proposed methodology is 
shown in Figure \ref{Fig:TransientDMP_2D_proposed_convergence}. 
The performance of the proposed methodology is compared with 
that of the single-field formulation and MATLAB's PDE Toolbox 
\cite{MATLAB_2012a} in Figures \ref{Fig:TransientDMP_2D_min_conc} 
and \ref{Fig:TransientDMP_2D_pdetool}.

%========================================;
%  Subsection: Square plate with a hole  ;
%========================================;
\subsection{Transient anisotropic diffusion in square plate with a hole}
\label{Subsec:TransientDMP_plate_with_hole}
The computational domain is given by $\Omega := (0,1) 
\times (0,1) - [0.45,0.55] \times [0.45,0.55]$. The 
initial concentration in the domain is taken to be 
zero (i.e., $c_0(\mathbf{x}) = 0$). The volumetric 
source is zero (i.e., $f(\mathbf{x},t) = 0$). The 
inner hole is prescribed with a constant concentration 
of unity, and the outer hole is prescribed with a 
constant concentration of zero. The diffusivity 
tensor is taken as follows:
%--------------------------------;
%  Equation: Diffusivity tensor  ;
%--------------------------------;
\begin{align}
  \mathbf{D}(\mathbf{x}) = \mathbf{R} \mathbf{D}_0 
  \mathbf{R}^{\mathrm{T}}
\end{align}
where $\mathbf{D}_0$ and the rotation tensor 
are, respectively, defined as follows:
%-----------------------;
%  Equation: D0 tensor  ;
%-----------------------;
\begin{subequations}
  \begin{align}
    \mathbf{D}_0 &= \left(\begin{array}{cc} 
      k_1 & 0 \\
      0 & k_2
    \end{array}\right) \\
    \mathbf{R} &= \left(\begin{array}{cc} 
      +\cos(\theta) & -\sin(\theta) \\
      +\sin(\theta) & +\cos(\theta) \\
    \end{array}\right)
  \end{align}
\end{subequations}
with the values $k_1 = 10$, $k_2 = 10^{-3}$ and $\theta = 
-\pi/6$. Using the maximum principle given by Theorem 
\ref{Theorem:TransientDMP_MP_parabolic}, it can be 
concluded that the concentration in the domain should 
be between zero and unity. This test problem is used 
to illustrate the following aspects: 
\begin{enumerate}[(i)]
\item The numerical results from COMSOL 
  \cite{multiphysics2012version} (which 
  is a popular commercial finite element 
  software package) do not satisfy the 
  maximum principle and the non-negative 
  constraint for transient anisotropic 
  diffusion. 
\item The proposed methodology satisfies the 
  maximum principle and the non-negative 
  constraint even on unstructured meshes 
  with no additional restrictions on the 
  time step. 
\item The approach of using the backward Euler 
  time stepping scheme with lumped capacity 
  matrix does not guarantee non-negative 
  solutions in the case of anisotropic 
  diffusion.
\end{enumerate}

Using numerical simulations it has been found 
that the transient solution is very close to 
the steady-state solution for time greater than 
0.05. Therefore, the time steps for this test 
problem are chosen to be smaller than or equal 
to 0.05 so that they are appropriate for transient 
analyses. 

We first show the results obtained using COMSOL 
\cite{multiphysics2012version}. Two different meshes 
are employed in the numerical simulations, which are 
shown in Figure \ref{Fig:TransientDMP_COMSOL_meshes}. 
The variation of the minimum concentration with time 
is shown in Figure \ref{Fig:TransientDMP_COMSOL_min_conc}, 
and the numerical results from COMSOL did not satisfy the 
non-negative constraint. Figures \ref{Fig:TransientDMP_COMSOL_Q4} 
and \ref{Fig:TransientDMP_COMSOL_T3} show the spread of the 
violation of the non-negative constraint and the concentration 
profiles using COMSOL for four-node structured mesh and 
three-node unstructured mesh, respectively.  From these 
figures, the following two observations can be made: 
\begin{enumerate}[(a)]
\item The magnitude of the violation of the 
  non-negative constraint increases as the 
  time step decreases. 
\item The violation reaches a steady-state value 
  after sufficient time, which is around $t = 0.05$ 
  for this problem. It should be emphasized that this 
  steady-state value for minimum concentration is a 
  significant non-negative number, and the violation 
  of the non-negative constraint is nearly 5\%.
\end{enumerate}

The aforementioned problem is also solved using the proposed 
methodology. Figure \ref{Fig:TransientDMP_QP_code_meshes} 
shows the unstructured computational meshes used in the 
numerical simulation. The concentration profiles obtained 
under the proposed methodology using these computational 
meshes are shown in Figures \ref{Fig:TransientDMP_QP_code_conc_Q4} 
and \ref{Fig:TransientDMP_QP_code_conc_T3}. Clearly, the proposed 
methodology satisfies the maximum principle and the non-negative 
constraint at all time levels. 
%
%%=================================================;
%%  Subsubsection: BE with lumped capacity matrix  ;
%%=================================================;
Figure \ref{Fig:TransientDMP_BE_lumped} clearly shows that 
the approach of employing the backward Euler time stepping 
scheme with lumped capacity matrix is not sufficient to meet 
the maximum principle and the non-negative constraint in 
the case of transient anisotropic diffusion. This approach 
will work in the case of transient isotropic diffusion 
provided some restrictions on the mesh are met, which 
is discussed briefly in Appendix of this paper. 

%=============================================================;
%  Subsection: Diffusion in heterogeneous anisotropic medium  ;
%=============================================================;
\subsection{Diffusion in heterogeneous anisotropic medium}
\label{Subsec:TransientDMP_2D_heterogeneous}
This problem considers transient diffusion in a bi-unit 
square domain with heterogeneous anisotropic diffusivity. 
Homogeneous Dirichlet boundary condition is applied on 
the entire boundary. The initial concentration is taken 
to be zero (i.e., $c_{0}(\mathbf{x}) = 0$). The volumetric 
source is taken as follows:
%-------------------------------;
%  Equation: Volumetric source  ;
%-------------------------------;
\begin{align}
  f(\mathbf{x},t) = \left\{\begin{array}{ll} 
  1 & \mathrm{if} \; (\mathrm{x},\mathrm{y}) 
  \in [3/8,5/8]^2 \\
  0 & \mathrm{otherwise} \end{array} \right.
\end{align}
The diffusivity tensor is taken as follows:
%--------------------------------;
%  Equation: Diffusivity tensor  ;
%--------------------------------;
\begin{align}
  \mathbf{D}(\mathbf{x}) = 
  \left(\begin{array}{cc}
    \mathrm{y}^{2} + \epsilon \mathrm{x}^2 
    & -(1 - \epsilon)\mathrm{xy} \\
    -(1 - \epsilon)\mathrm{xy} 
    & \epsilon \mathrm{y}^2 + \mathrm{x}^2
  \end{array}\right)
\end{align}
with $\epsilon = 0.001$. 
Note that the diffusivity tensor is positive 
  definite in the open set $\Omega:= (0,1) \times (0,1)$. 
  This diffusivity tensor is widely used to test the 
  robustness of numerical formulations in the context 
  of maximum principles (e.g., Le Potier 
  \cite{LePotier_CRM_2005_v341_p787}).

Four-node quadrilateral finite elements are employed in the 
numerical simulation. The numerical results are generated 
for two different meshes (XSeed = YSeed = 51 and 101). 
Note that XSeed and YSeed denote the number of nodes 
along the x-direction and y-direction, respectively. 
Figure \ref{Fig:TransientDMP_2D_heterogeneous_mesh} 
shows a typical computational mesh used in the 
numerical simulations with XSeed = YSeed = 51. 
Various time steps ($\Delta t = 0.05, \; 0.1, \; 0.5$ 
and $1$) are employed in the numerical simulations. 
The rationale behind the choice of the time steps 
is that the transient solution is very close to the 
steady-state response for times greater than 2. Hence, 
any time step bigger than the ones used in the numerical 
simulations does not capture the transient features of 
the problem, and will not be appropriate for a transient 
analysis. Any smaller time step will result in bigger 
violation of the non-negative constraint, which will 
be evident from the numerical results presented in 
this paper and which is also reported in Reference 
\cite{Harari_CMAME_2004_v193_p1491}. 

For the aforementioned parameters, the variation of the 
minimum concentration with time under the single-field 
formulation is shown in Figure 
\ref{Fig:TransientDMP_2D_heterogeneous_min_conc}. The 
proposed methodology produced non-negative values for 
the concentration under all the considered cases, and 
the minimum concentration is zero. On the other hand, 
the variations of maximum concentration with time 
under the single-field formulation and the proposed 
methodology are similar, which is illustrated in Figure 
\ref{Fig:TransientDMP_2D_heterogeneous_max_conc}. 

Figure \ref{Fig:TransientDMP_2D_heterogeneous_contours} 
compares the contours of the concentration obtained using 
the single-field formulation and the proposed methodology 
for XSeed = 51 and $\Delta t = 0.5$. Even for a problem 
involving transient diffusion in a heterogeneous 
anisotropic medium, the proposed methodology did 
not violate the non-negative constraint. Figure 
\ref{Fig:TransientDMP_2D_heterogeneous_Elapsed_time} 
compares the elapsed computational time of the 
proposed methodology with that of the Galerkin 
single-field formulation. The elapsed time 
is measured using tic-toc feature available 
in MATLAB \cite{MATLAB_2012a}. From this figure, 
one can conclude that the additional cost incurred 
by the proposed methodology in meeting maximum 
principles and the non-negative is negligible.

%****************************************;
%                                        ;
%  NAME                                  ;
%    S5_TransientDMP_Closure.tex         ;
%                                        ;
%  WRITTEN BY                            ;
%    Kalyana Nakshatrala                 ;
%                                        ;
%****************************************;
\section{CONCLUDING REMARKS}
\label{Sec:TransientDMP_Conclusions}
%-----------------------------------;
%  What has been done in the paper  ;
%-----------------------------------;
We have presented a novel methodology for transient anisotropic 
diffusion equations that satisfies maximum principles and the 
non-negative constraint on computational grids with \emph{no 
additional restrictions on the time step}. The methodology 
has been developed using the method of horizontal lines, and 
techniques from convex programming. 
%-------------------------------------------;
%  Salient features of the proposed method  ;
%-------------------------------------------;
We have shown that the semi-discrete procedure based on the 
standard single-field formulation gives unphysical negative 
concentrations and violates maximum principles. Using several 
representative numerical examples we have shown that the proposed 
methodology satisfies maximum principles and the non-negative 
constraint on general computational grids with anisotropic and 
heterogeneous diffusion. The proposed methodology performs 
gives physically meaningful non-negative concentrations even 
on coarse computational grids and for small time steps.
%--------------------------;
%  Plausible future works  ;
%--------------------------;
\emph{
We shall conclude the paper by discussing two possible 
future research endeavors in the area of discrete maximum 
principles. We also briefly outline potential challenges 
one may have to overcome in addressing these research 
problems.}
\begin{enumerate}[(i)]
\item A possible future work is to incorporate advection in 
  addition to diffusion, and devise a non-negative methodology 
  for both steady-state and transient advection-diffusion 
  equation. However, one cannot directly implement the 
  procedure presented in this paper and in references 
  \cite{Nakshatrala_Valocchi_JCP_2009_v228_p6726, 
  Nagarajan_Nakshatrala_IJNMF_2011_v67_p820} for 
  advection-diffusion equation, as the advection 
  term makes the spatial differential operator 
  non-self-adjoint.
\item Another interesting research problem is to devise a 
  non-negative methodology for both steady and transient 
  \emph{nonlinear} diffusion-type equations. The obvious 
  challenges will be handling nonlinearity, and to ensure 
  that the computational cost in obtaining non-negative 
  solutions is not prohibitively expensive. 
\end{enumerate}

%****************************************;
%                                        ;
%  NAME                                  ;
%    S6_TransientDMP_Closure.tex         ;
%                                        ;
%  WRITTEN BY                            ;
%    Kalyana Nakshatrala                 ;
%                                        ;
%****************************************;
\section{APPENDIX}
\label{Sec:TransientDMP_Conclusions}
We now discuss other possible ways of implementing the 
methods of horizontal and vertical lines for transient 
diffusion-type equations. We will also provide reasons 
why these approaches may not satisfy maximum principles 
and the non-negative constraint. This discussion will 
shed light on the rationale behind the proposed methodology, 
and can guide future efforts in developing robust solvers 
for other important parabolic partial differential equations 
(e.g., transient diffusive-reactive systems).
All the approaches presented in this appendix employ 
trapezoidal family of time integrators, which can be 
written as follows:
%----------------------------------------------------;
%  Equation: Trapezoidal family of time integrators  ;
%----------------------------------------------------;
\begin{align}
  \label{Eqn:Transient_DMP_trapezoidal}
  \boldsymbol{c}^{(n+1)} = \boldsymbol{c}^{(n)} 
  + \Delta t \left((1 - \gamma) \boldsymbol{v}^{(n)} 
  + \gamma \boldsymbol{v}^{(n+1)}\right)
\end{align}
where $\gamma \in [0,1]$. (Recall that the parameter $\gamma$ 
used in Section \ref{Sec:TransientDMP_Derivation} is different 
from the parameter in trapezoidal family of time integrators.)
The discussion and conclusions in this appendix will 
hinge on the following result from Matrix Algebra. 
Given any vector $\boldsymbol{b} \succeq \boldsymbol{0}$, 
the solution of a system of linear equations of the form 
%----------------------------------------;
%  Equation: System of linear equations  ;
%----------------------------------------;
\begin{align}
  \boldsymbol{A} \boldsymbol{x} = \boldsymbol{b}
\end{align}
will be non-negative (i.e., $\boldsymbol{x} \succeq 
\boldsymbol{0}$) \emph{if and only} if the matrix 
$\boldsymbol{A}$ is a monotone. (Recall that $\succeq$ 
denotes component-wise inequality.) A matrix is called 
a monotone if the matrix is invertible and all the 
entries of its inverse are non-negative. For further 
details on monotone matrices refer to the classic 
texts \cite{Fiedler,Berman_Plemmons,Saad}.

%===============================================================;
%  Subsection: Method of vertical lines at integral time steps  ;
%===============================================================;
\subsection{Method of vertical lines at integral time steps}
In this paper, this method is referred to as the 
\emph{standard single-field formulation}.
This is the most commonly used method for solving transient 
diffusion equation, and can be found in many introductory 
texts on finite element methods (e.g., \cite{Hughes,Reddy,
Zienkiewicz}).
The method is based on standard semi-discrete methodology 
and Galerkin formalism. The corresponding weak form reads: 
Find $c(\mathbf{x},t) \in \mathcal{P}_t$ such that we have
%--------------------------------------;
%  Equation: Single-field formulation  ;
%--------------------------------------;
\begin{align}
  \label{Eqn:TransientDMP_single_field_formulation}
  \int_{\Omega} w(\mathbf{x}) \frac{\partial 
    c(\mathbf{x},t)}{\partial t} \; \mathrm{d} 
  \Omega 
  &+\int_{\Omega} \mathrm{grad}[w(\mathbf{x})] 
  \cdot \mathbf{D}(\mathbf{x}) \mathrm{grad}[c(\mathbf{x},t)] 
  \; \mathrm{d} \Omega \nonumber \\
  &= \int_{\Omega} w(\mathbf{x}) \; f(\mathbf{x},t) \; 
  \mathrm{d} \Omega + \int_{\Gamma^{\mathrm{N}}} w(\mathbf{x}) 
  \; q_p(\mathbf{x},t) \; \mathrm{d} \Gamma \quad \forall 
  w(\mathbf{x}) \in \mathcal{Q}
\end{align}
where 
%-----------------------------;
%  Equation: Function spaces  ;
%-----------------------------;
\begin{align}
  \mathcal{P}_t &:= \{c(\mathbf{x},t) \in H^{1}(\Omega) \; 
  \big| \; c(\mathbf{x},t) = c_p(\mathbf{x},t) \; 
  \mathrm{on} \; \Gamma^{\mathrm{D}}\} 
\end{align}
and the function space $\mathcal{Q}$ is defined previously 
in equation \eqref{Eqn:Transient_DMP_Q_space}. After spatial 
discretization using the finite element method, one obtains 
a system of ordinary differential equations of following 
form:
%-------------------------------------------------------;
%  Equation: System of ordinary differential equations  ;
%-------------------------------------------------------;
\begin{align}
  \boldsymbol{C} \frac{d \boldsymbol{c}(t)}{dt} + 
  \boldsymbol{K} \boldsymbol{c}(t) = \boldsymbol{f}(t)
\end{align}
The capacity matrix $\boldsymbol{C}$ is symmetric and 
positive definite, and all the entries of the matrix 
are non-negative. The matrix $\boldsymbol{K}$ is symmetric 
and positive semi-definite. More importantly, the matrix 
$\boldsymbol{K}$ will not be a monotone if the medium 
(i.e., the diffusion process) is not isotropic. (If the 
medium is isotropic, it is easy to check that the matrix 
$\boldsymbol{K}$ is diagonally dominant, and hence it will 
be a monotone matrix.) 
If a time stepping scheme from the trapezoidal family 
is employed to solve the above ordinary differential 
equations, one can obtain a system of linear equations 
of the following form:
%----------------------------------------------;
%  Equation: Trapezoidal and linear equations  ;
%----------------------------------------------;
\begin{align}
  \label{Eqn:Transient_DMP_trapezoidal_linear}
  \left(\frac{1}{\gamma \Delta t} \boldsymbol{C} + 
  \boldsymbol{K}\right) \boldsymbol{c}^{(n+1)} = 
  \boldsymbol{f}^{(n+1)} + \frac{1}{\gamma \Delta t} 
  \boldsymbol{C} \left(\boldsymbol{c}^{(n)} + \Delta 
  t (1 - \gamma) \boldsymbol{v}^{(n)}\right)
\end{align}

There are two potential scenarios that can contribute 
to the violation of the non-negative constraint and 
maximum principle under the method of vertical lines 
at integral time steps. 
Firstly, the vector on the right side of equation 
\eqref{Eqn:Transient_DMP_trapezoidal_linear} need 
not be non-negative, as there is no physical constraint 
requiring that $\boldsymbol{v}^{(n)}$ should be non-negative. 
Even if the volumetric source is non-negative (i.e., 
$\boldsymbol{f}^{(n+1)} \succeq \boldsymbol{0}$), 
$\boldsymbol{c}^{(n)} \succeq \boldsymbol{0}$, 
$\gamma \geq 0$, $\Delta t> 0$, and all the 
entries of the capacity matrix are non-negative; 
the resulting vector on the right side of the 
above equation need not be non-negative. One 
possible exception is when $\gamma = 1$ (that 
is, when the backward Euler is employed).
Secondly, the matrix on the left side of equation 
\eqref{Eqn:Transient_DMP_trapezoidal_linear} may not be a 
monotone. Even for an isotropic medium, the matrix will be 
monotone \emph{only} if the time step is greater than a 
critical time step or by employing lumped capacity matrix. 
Based on the above discussion, the sufficient conditions 
for the method of vertical lines at integral time levels 
to satisfy maximum principles and the non-negative 
constraint are as follows:
%----------------------------------;
%  Itemize: Sufficient conditions  ;
%----------------------------------;
\begin{itemize}
\item isotropic diffusion, 
\item low-order finite elements, 
\item backward Euler scheme (i.e., $\gamma = 1$), 
\item lumped capacity matrix, 
\item select a time step \emph{greater} than the 
  critical time step, and 
\item place constraints on the mesh and element shapes (e.g., 
  well-centered triangular elements, rectangular elements 
  with aspect ratio between $1/\sqrt{2}$ and $\sqrt{2}$). 
\end{itemize}
It is important to note that the above conditions are too 
restrictive to be able to obtain physically meaningful 
results for practical problems. But this method is 
commonly employed in many numerical simulations, and 
in many commercial finite element packages. Few other 
remarks about this method are in order. 

%------------------------------------------------;
%  Remark: Remark about constraints on the mesh  ;
%------------------------------------------------;
\begin{remark}
  For a discussion on necessary constraints on a 
  finite element mesh to satisfy maximum principles 
  and the non-negative constraint, see references 
  \cite{Ciarlet_Raviart_CMAME_1973_v2_p17, 
    Christie_Hall_IJNME_1984_v20_p549,
    Horvath_IJCMA_2008_v55_p2306,
    Nakshatrala_Valocchi_JCP_2009_v228_p6726,
    Nagarajan_Nakshatrala_IJNMF_2011_v67_p820}. 
  However, all these constraints are for isotropic 
  diffusion. It is noteworthy that, in the case of 
  anisotropy, a computational mesh may not even 
  exist that will ensure the satisfaction of 
  maximum principles and the non-negative 
  constraint. 
\end{remark}

%--------------------------------------------;
%  Remark: Remark about critical time steps  ;
%--------------------------------------------;
\begin{remark}
  Several studies derived critical time steps with 
  respect to maximum principles. For example, see 
  references \cite{Thomas_Zhou_CNME_1998_p809,
    Ilinca_Hetu_CMAME_2002_v191_p3073}. But these 
  derivations for critical time steps are restricted 
  to one-dimensional problems, isotropic diffusion, 
  and backward Euler.
\end{remark}

%--------------------------------------------------;
%  Remark: Remark on not selecting vertical lines  ;
%--------------------------------------------------;
\begin{remark}
  It is noteworthy that there is no obvious way of modifying 
  the non-negative formulations that has been shown recently shown 
  to be successful for steady-state diffusion equations (e.g., 
  see references \cite{Nakshatrala_Valocchi_JCP_2009_v228_p6726,
    Nagarajan_Nakshatrala_IJNMF_2011_v67_p820}) to obtain a 
  non-negative formulation for transient diffusion equation 
  under the method of vertical lines at integral time steps. 
  This is the reason why this method has not been considered 
  as the basis in Section \ref{Sec:TransientDMP_Derivation}.
\end{remark}

%=================================================================;
%  Subsection: Method of horizontal lines at integral time steps  ;
%=================================================================;
\subsection{Method of horizontal lines at integral time steps}
By applying the method of horizontal lines at integral time levels and 
eliminating $v^{(n+1)}(\mathbf{x})$ using the time discretization of 
trapezoidal family given by equation \eqref{Eqn:Transient_DMP_trapezoidal}, 
the time discretized equations take the following form:
%---------------------------------------------------------------;
%  Equation: Method of horizontal lines and trapezoidal family  ;
%---------------------------------------------------------------;
\begin{subequations}
  \label{Eqn:Transient_DMP_MHL_trapezoidal}
  \begin{align}
    \label{Eqn:Transient_DMP_MHL_trapezoidal_GE}
    &\frac{1}{\gamma \Delta t} c^{(n+1)}(\mathbf{x}) - 
    \mathrm{div}[\mathbf{D}(\mathbf{x}) \mathrm{grad}
      [c^{(n+1)}]] = f^{(n+1)}(\mathbf{x}) + \frac{1}
           {\gamma \Delta t}\left(c^{(n)}(\mathbf{x}) + 
           (1 - \gamma) \Delta t v^{(n)}(\mathbf{x}) \right) 
    \; \mathrm{in} \; \Omega \\
    &c^{(n+1)}(\mathbf{x}) = c_p^{(n+1)}(\mathbf{x}) \quad 
    \mathrm{on} \; \Gamma^{\mathrm{D}} \\
    \label{Eqn:Transient_DMP_MHL_trapezoidal_Neumann}
    &\mathbf{\hat{n}}(\mathbf{x}) \cdot \mathbf{D}(\mathbf{x})
    \mathrm{grad}[c^{(n+1)}(\mathbf{x})] = q_p^{(n+1)}
    (\mathbf{x}) \quad \mathrm{on} \; \Gamma^{\mathrm{N}}
  \end{align}
\end{subequations}
In going from equations 
\eqref{Eqn:TransientDMP_PDE}--\eqref{Eqn:TransientDMP_IC} to equations 
\eqref{Eqn:Transient_DMP_MHL_trapezoidal_GE}--\eqref{Eqn:Transient_DMP_MHL_trapezoidal_Neumann}, 
the temporal discretization may not preserve the non-negative constraint, which should be 
interpreted in the following sense. One may not get a non-negative solution under equations 
\eqref{Eqn:Transient_DMP_MHL_trapezoidal_GE}--\eqref{Eqn:Transient_DMP_MHL_trapezoidal_Neumann} 
even when the solution to the original time continuous problem given by equations 
\eqref{Eqn:TransientDMP_PDE}--\eqref{Eqn:TransientDMP_IC} is non-negative. This is 
again due to the fact that the right side of equation \eqref{Eqn:Transient_DMP_MHL_trapezoidal_GE} 
can be negative, as there is no physical constraint requiring 
that the rate of concentration $v^{(n)}(\mathbf{x})$ should be 
non-negative. However, it does not mean that the time discrete 
equation does not satisfy maximum principles and the non-negative 
equation. The above equation is diffusion with decay, and as 
mentioned earlier, this equation also satisfies maximum principles 
and the non-negative constraint. But, the requirement for the 
non-negative constraint is that $f^{(n+1)}(\mathbf{x}) + 
\frac{1}{\gamma \Delta t} \left(c^{(n)}(\mathbf{x}) + 
(1 - \gamma) \Delta t v^{(n)}(\mathbf{x})\right) \geq 0$.

%==================================================================;
%  Subsection: Method of horizontal lines at weighted time levels  ;
%==================================================================;
\subsection{Method of horizontal lines at weighted time levels}
We shall perform temporal discretization at the weighted time 
level $t_{n+\gamma}$, which gives rise to the following equations:
\begin{subequations}
  \begin{align}
    &\frac{1}{\gamma \Delta t} c^{(n+\gamma)}(\mathbf{x}) - 
    \mathrm{div}[\mathbf{D}(\mathbf{x})\mathrm{grad}
      [c^{(n+\gamma)}]] = f^{(n+\gamma)}(\mathbf{x}) + 
    \frac{1}{\gamma \Delta t} c^{(n)}(\mathbf{x})
    \quad \mathrm{in} \; \Omega \\
    &c^{(n+\gamma)}(\mathbf{x}) = c_{p}(\mathbf{x},t_{n+\gamma}) 
    \quad \mathrm{on} \; \Gamma^{\mathrm{D}} \\
    &\mathbf{\hat{n}}(\mathbf{x}) \cdot \mathbf{D}(\mathbf{x}) 
    \mathrm{grad}[c^{(n+\gamma)}] = q_{p}^{(n+\gamma)}
    (\mathbf{x}) \quad \mathrm{on} \; \Gamma^{\mathrm{N}} 
  \end{align}
\end{subequations}
One can obtain nodal concentrations at weighted time levels (i.e., 
$\boldsymbol{c}^{(n+\gamma)}$) by employing the optimization-based 
solver presented in Section \ref{Sec:TransientDMP_Derivation}.
Noting the results presented in Reference 
\cite{Nakshatrala_Prakash_Hjelmstad_JCP_2009_v228_p7957} on 
stability issues associated with numerical time integration 
of differential-algebraic equations, the concentration at 
integral time levels is approximated in terms of corresponding 
quantities at weighted time levels. The interpolation scheme 
is pictorially described in Figure 
\ref{Fig:Transient_DMP_weighted_quantities}, and can be 
mathematically written as follows: 
%---------------------------------------------------;
%  Equation: Concentration at integral time levels  ;
%---------------------------------------------------;
\begin{align}
  \label{Eqn:Transient_DMP_integral_in_terms_of_weights}
  \boldsymbol{c}^{(n+1)} &= \gamma \boldsymbol{c}^{(n + \gamma)} 
  + (1 - \gamma) \boldsymbol{c}^{(n + 1 + \gamma)}
\end{align}
The rate of concentration at weighted time levels 
can be calculated as follows:
%------------------------------------------;
%  Equation: Rate at weighted time levels  ;
%------------------------------------------;
\begin{align}
  \boldsymbol{v}^{(n+\gamma)} = \frac{\boldsymbol{c}^{(n+1)} 
    - \boldsymbol{c}^{(n)}}{\Delta t} 
\end{align}
The corresponding quantity at integral time levels 
are calculated as follows: 
%-----------------------------------------------------;
%  Equation: Interpolation for rate of concentration  ;
%-----------------------------------------------------;
\begin{align}
  \label{Eqn:Transient_DMP_interpolation_rate}
  \boldsymbol{v}^{(n+1)} = \gamma \boldsymbol{v}^{(n+\gamma)} 
  + (1 - \gamma) \boldsymbol{v}^{(n+1+\gamma)}
\end{align}
The interpolation given by equation 
\eqref{Eqn:Transient_DMP_integral_in_terms_of_weights} is 
different from the usual way of interpolating the quantities 
at weighted time levels in terms of integral time levels. 
That is, 
\begin{align}
  \boldsymbol{c}^{(n+\gamma)} = (1 - \gamma) \boldsymbol{c}^{(n)} 
  + \gamma \boldsymbol{c}^{(n+1)}
\end{align}
Figure \ref{Fig:Transient_DMP_weighted_quantities} compares both 
these interpolation schemes. The only drawback of the method 
presented in this subsection is that it is not self-starting, 
as we do not have $\boldsymbol{c}^{(n-1+\gamma)}$ when $n = 1$ 
unless $\gamma = 1$. But this drawback can be easily overcome 
by employing the backward Euler scheme (i.e., $\gamma = 1$) 
for the first time level, and then employ the method for 
subsequent time levels. Therefore, the method presented 
in this subsection can be considered as an alternate to 
the method presented in Section 
\ref{Sec:TransientDMP_Derivation} to satisfy maximum 
principles and the non-negative constraint for transient 
diffusion-type equations. 

%------------------------------------------------;
%  Figure: Interpolation of weighted quantities  ;
%------------------------------------------------;
\begin{figure}
  \psfrag{gm}{$\gamma \Delta t$}
  \psfrag{gm1}{$(1- \gamma)\Delta t$}
  \psfrag{tn}{$t_{n}$}
  \psfrag{tnm}{$t_{n-1}$}
  \psfrag{tnp}{$t_{n+1}$}
  \psfrag{tngamma}{$t_{n+\gamma}$}
  \psfrag{tnmgamma}{$t_{n - 1 + \gamma}$}
  \psfrag{cn}{$c^{(n)}$}
  \psfrag{cnm}{$c^{(n-1)}$}
  \psfrag{cnp}{$c^{(n+1)}$}
  \psfrag{cngamma}{$c^{(n+\gamma)}$}
  \psfrag{cnmgamma}{$c^{(n - 1 + \gamma)}$}
  \includegraphics[scale=0.5]{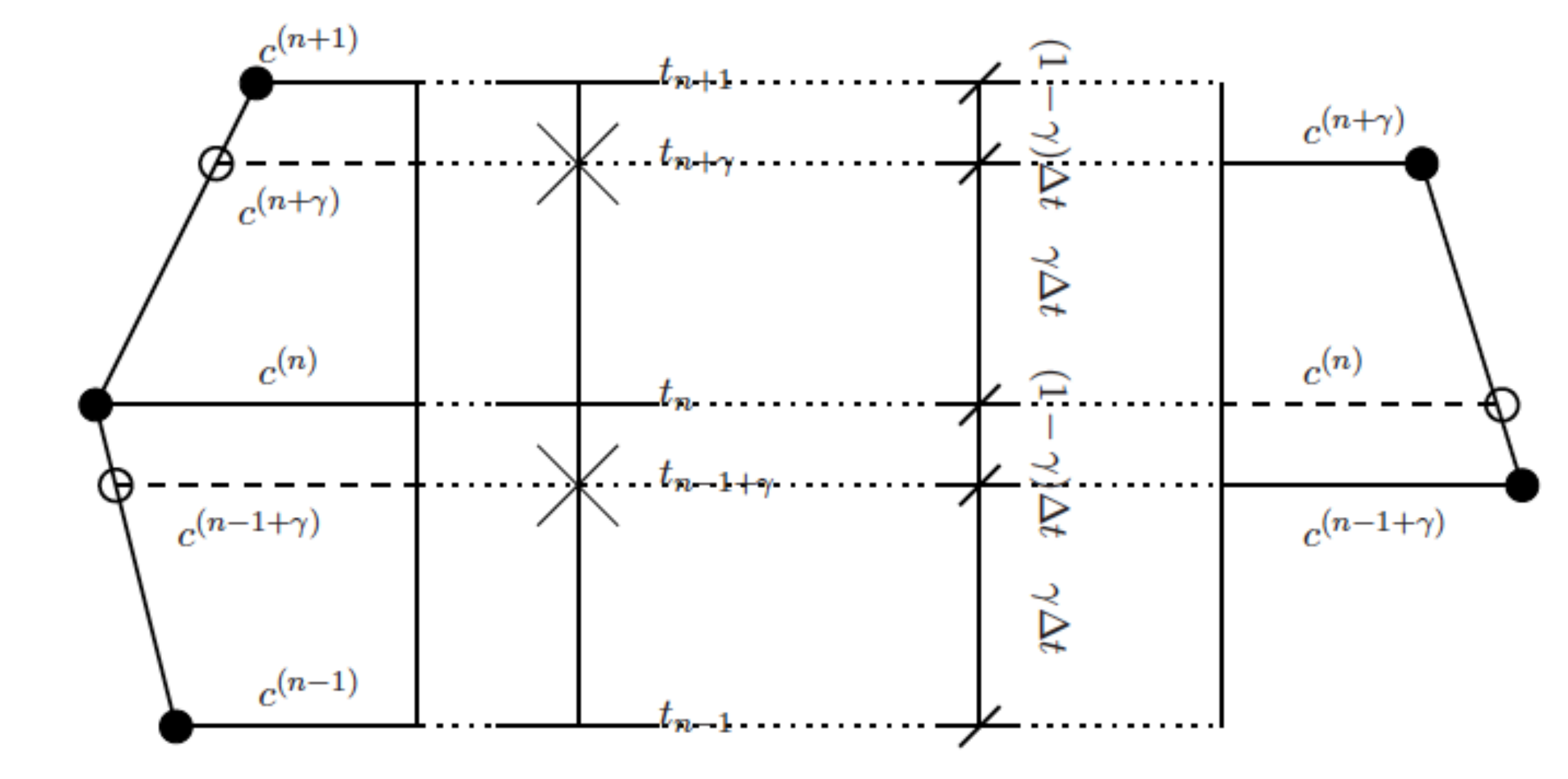}
  \caption{The left part of the figure shows the usual way of 
    interpolating quantities at integral time levels to obtain 
    the corresponding quantities at weighted time levels. That 
    is, $c^{(n+\gamma)} = (1 - \gamma) c^{(n)} + \gamma c^{(n+1)}$. 
    The right part of the figure shows the interpolation of 
    quantities at weighted time levels to obtain the corresponding 
    quantities at integral time levels, which is adopted in this 
    paper. That is, $c^{(n)} = \gamma c^{(n - 1 + \gamma)} + (1 - 
    \gamma) c^{(n+\gamma)}$. The interpolated quantities are 
    indicated using hollow circles. 
    \label{Fig:Transient_DMP_weighted_quantities}}
\end{figure}

%============================;
%  Section: Acknowledgments  ;
%============================;
\section*{ACKNOWLEDGMENTS}
K.B.N. and M.S. acknowledge the funding received from the 
DOE Office of Nuclear Energy's Nuclear Energy University 
Programs (NEUP).  The first author (K.B.N.) also acknowledges 
the support from the National Science Foundation under Grant 
No. CMMI 1068181. The opinions expressed in this paper are 
those of the authors and do not necessarily reflect that of the 
sponsors.

%==============================;
%  Include all the references  ;
%==============================;
\bibliographystyle{plain}
\bibliography{Master_References/Master_References,Master_References/Books}

\clearpage
\newpage

%===========================;
%  Include all the figures  ;
%===========================;
%**************************************;
%                                      ;  
%  NAME                                ;
%    S8_TransientDMP_Figures.tex       ;
%                                      ;
%  WRITTEN BY                          ;
%    Kalyana Babu Nakshatrala          ;
%                                      ;
%**************************************;

%-------------------------------------------------------;
%  Figure 3: 1D problem with uniform initial condition  ;
%-------------------------------------------------------;
\begin{figure}
  \includegraphics[clip,scale=0.4]{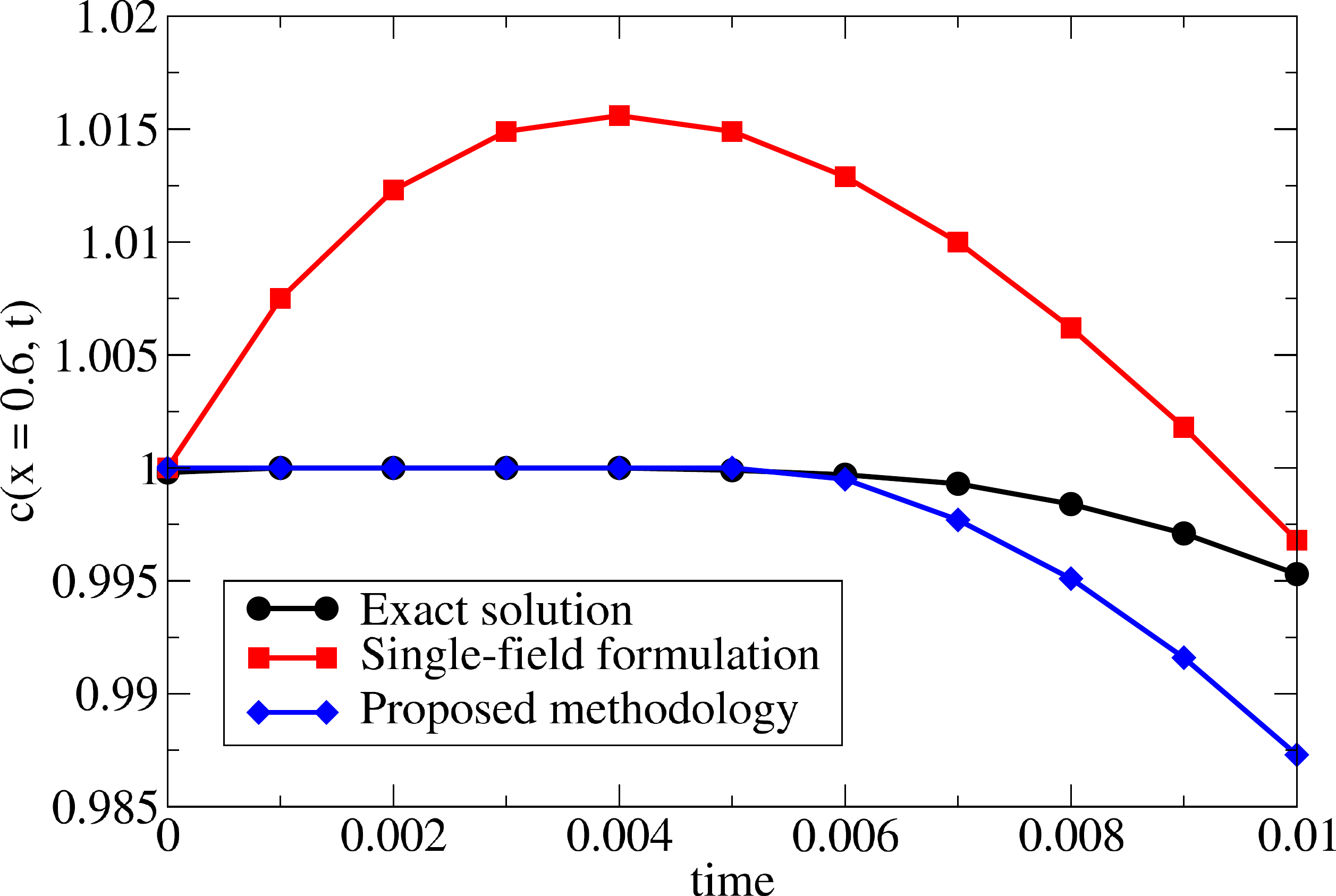}
  \caption{One dimensional problem with uniform initial condition: 
    This figure shows the concentration at $\mathrm{x} = 0.6$ as a 
    function of time. The time step is taken as 
    $\Delta t = 0.001$, and five equally spaced two-node 
    finite elements are employed. The numerical solutions obtained 
    from the single-field formulation and the proposed methodology 
    are compared with the analytical solution. From the maximum 
    principle, it is known that the analytical solution is bounded 
    above by unity. The numerical solution from the single-field 
    formulation exceeds unity while the proposed methodology 
    satisfies the maximum principle. \label{Fig:OneD_uniform_IC_u_x_dot6}}
\end{figure}

%---------------------------------------;
%  Figure 4: Harari problem with rates  ;
%---------------------------------------;
\begin{figure}
\psfrag{c}{$\dot{c}(\mathrm{x},t = 0.009)$}
  \includegraphics[clip,scale=0.4]{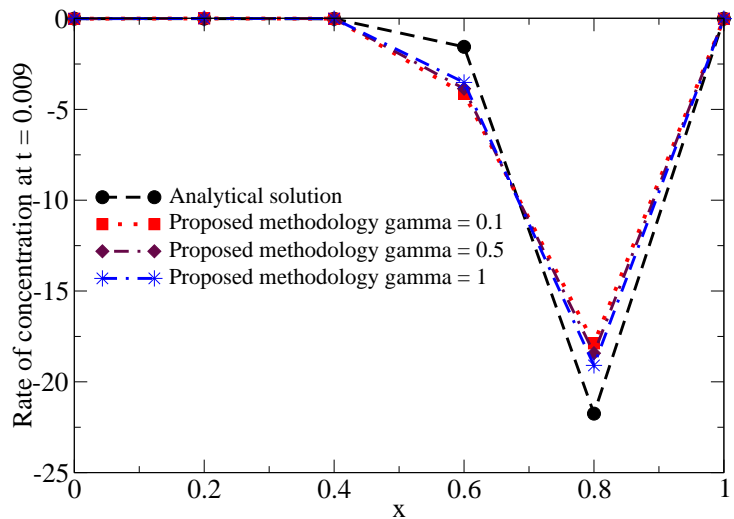}
  \caption{One dimensional problem with uniform initial condition: 
    This figure shows the rate of nodal concentrations at $t = 0.009$ 
    as a function of $\mathrm{x}$. Various values of $\gamma$ are 
    used, which are indicated in the figure. The time step is taken as 
    $\Delta t = 0.001$, and five equally spaced two-node finite elements 
    are employed. The numerical solutions obtained from the proposed 
    methodology are compared with the analytical solution.  
    \label{Fig:OneD_Harari_vn_second_method}}
\end{figure}

%----------------------------------------------------------------------;
%  Figure 5: 1D problem: Proposed methodology vs. analytical solution  ;
%----------------------------------------------------------------------;
\begin{figure}[htp]
  \centering
  \includegraphics[scale=0.45]{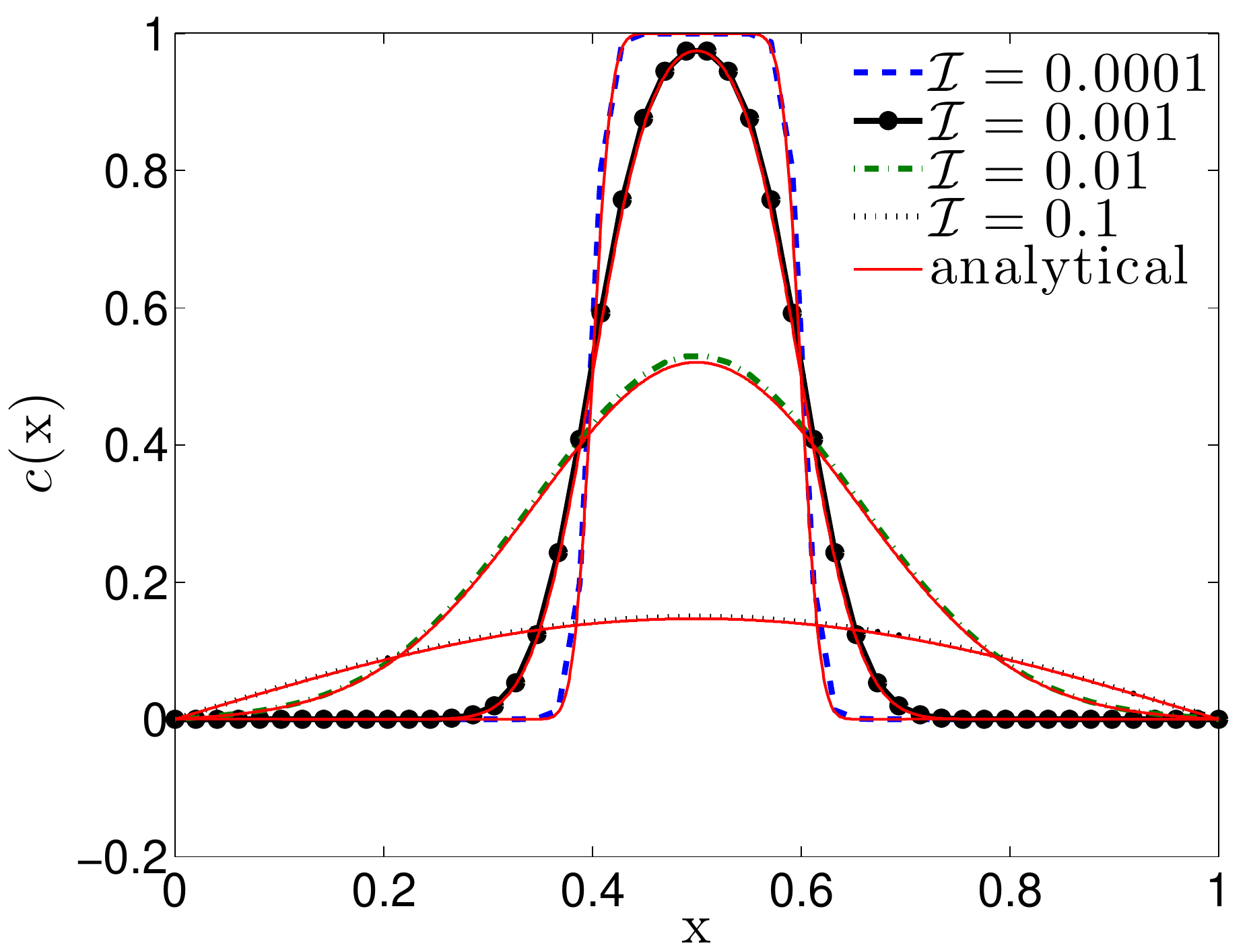}
  \caption{One-dimensional problem with non-uniform initial condition: 
    This figure compares the concentration obtained using the proposed 
    methodology with the analytical solution at various instants of 
    time. For this test problem, the solution should be between zero 
    and unity. The time step used in the numerical simulation is 
    $\Delta t = 10^{-4}$. As one can see from the figure, 
    the proposed methodology performed well, and it did not violate 
    the maximum principle and the non-negative constraint. 
    \label{Fig:TransientDMP_1D_proposed_formulation}}
\end{figure}

%----------------------------------------------------------------;
%  Figure 6: 1D Galerkin and proposed convergence (gamma = 1.0)  ;
%----------------------------------------------------------------;
\begin{figure}[h]
  \centering
  \subfigure[single-field formulation]{
    \includegraphics[scale=0.3]{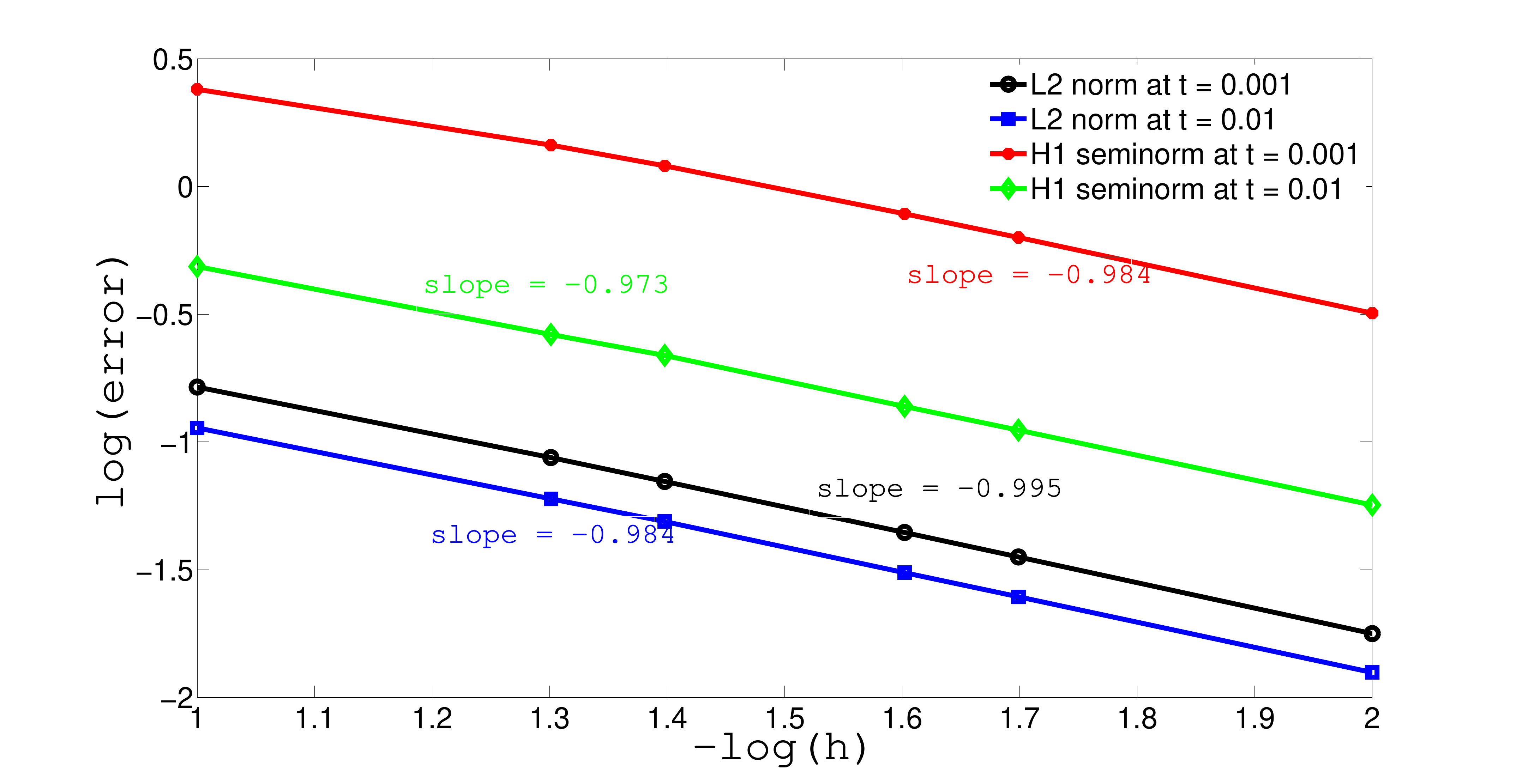}}        
  \subfigure[proposed methodology]{
    \includegraphics[scale=0.3]{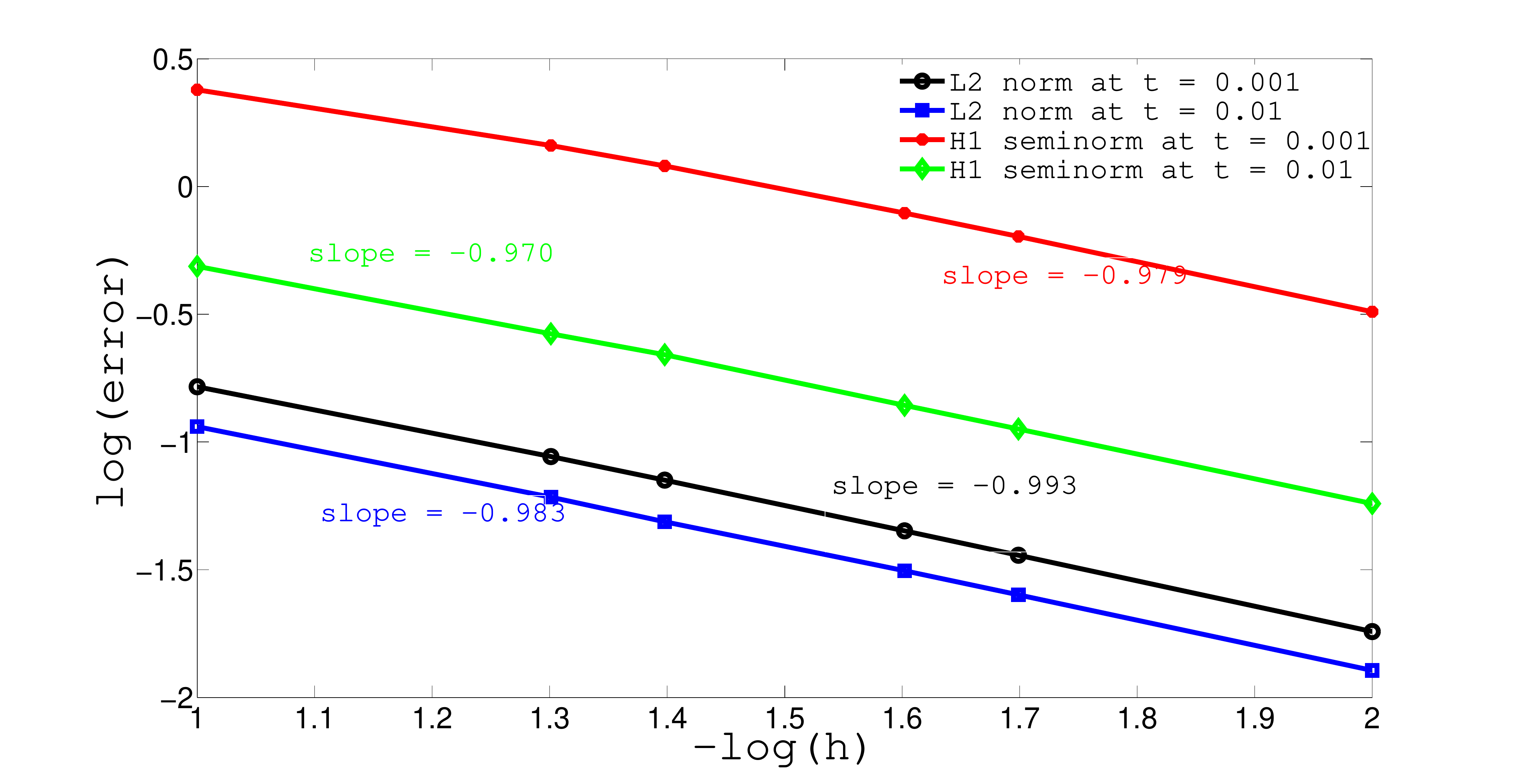}} 
  \caption{One-dimensional problem with non-uniform initial 
    condition: This figure compares the numerical convergence 
    of the single-field formulation (top figure) and the 
    proposed non-negative methodology (bottom figure) with 
    simultaneous spatial and temporal refinements such that 
    $\Delta t \propto {(\Delta \mathrm{x})}^2$. In this 
    numerical simulation, we have taken $\gamma = 1$. The 
    convergence is carried out at two different time levels: 
    $t = 0.001$ and $t = 0.01$. The coarsest mesh has $11$ 
    nodes, and the corresponding time step used for this 
    mesh is $\Delta t = 10^{-3}$. The terminal rates of 
    convergence in $L_{2}$-norm and $H^{1}$-seminorm are 
    also indicated in the figure. 
    \label{Fig:TransientDMP_1D_convergence}}
\end{figure}

%-----------------------------------------------------;
%  Figure 7: 1D problem: violation of non-negativity  ;
%-----------------------------------------------------;
\begin{figure}[!h]
  \centering
  \subfigure{
    \includegraphics[clip,scale=0.42]{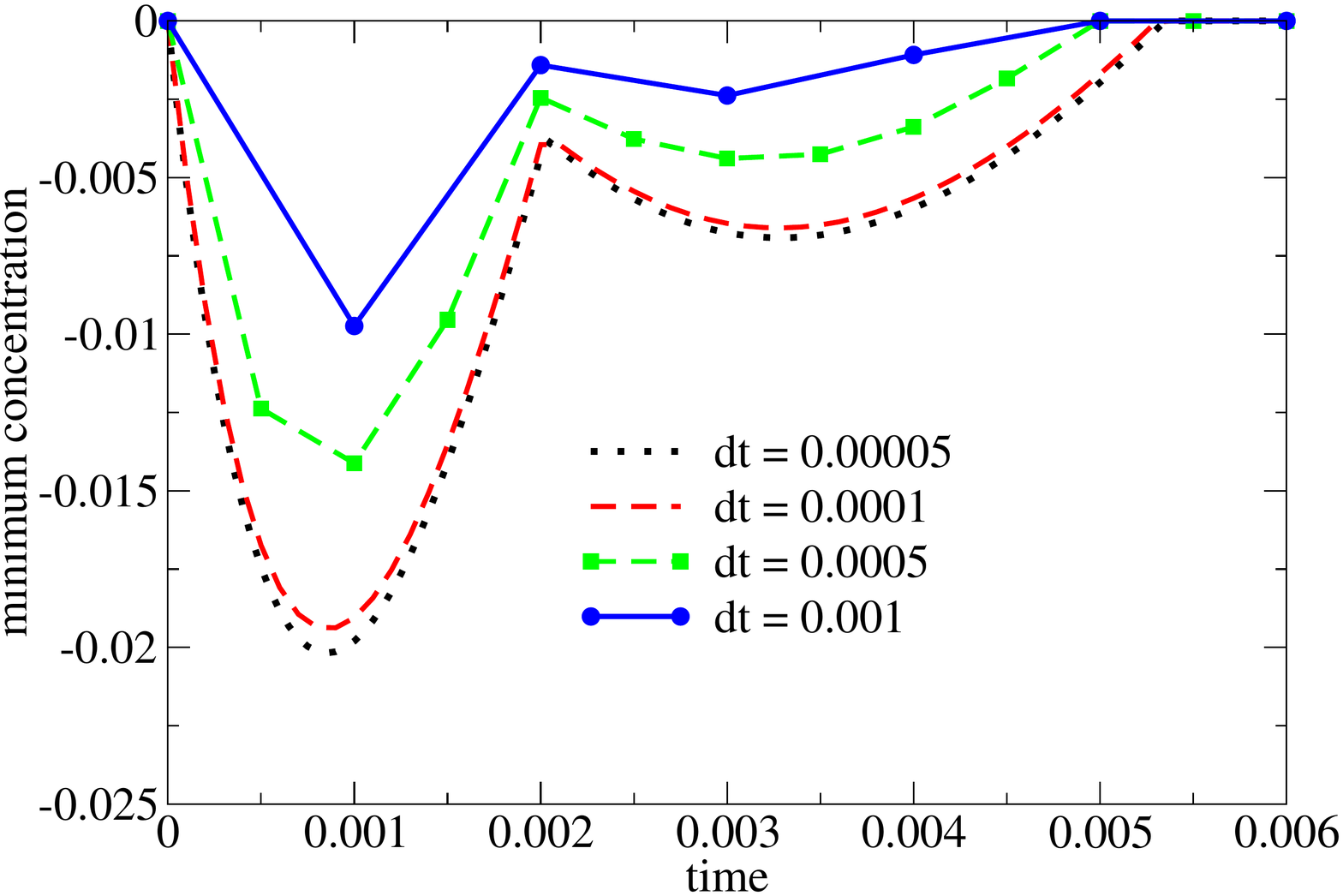}}        
  \subfigure{
    \includegraphics[clip,scale=0.42]{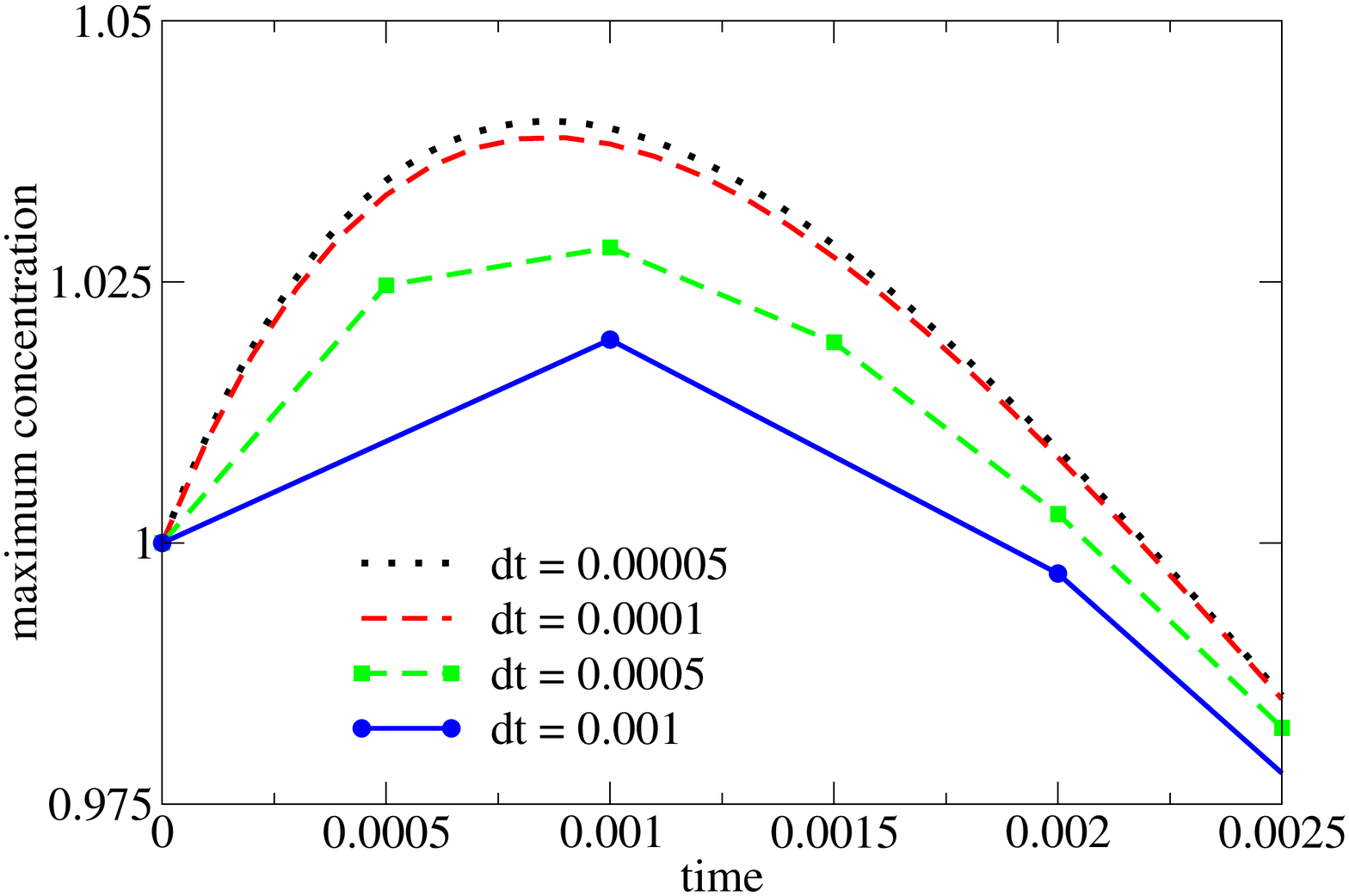}} 
  \caption{One-dimensional problem with non-uniform initial condition: 
    This figure shows the variation of the \emph{minimum} (top) 
    and \emph{maximum} (bottom) concentrations with respect to 
    time for a given mesh and for different time steps under 
    the \emph{single-field formulation}. According to the 
    maximum principle, the concentration should lie between 
    0 and 1. The domain is divided into 10 equal-sized two-node 
    finite elements. The time steps used in the numerical 
    simulation are indicated in the figure. The single-field 
    formulation violated both the minimum (which is the 
    non-negative constraint) and the maximum values that 
    are given by the maximum principle. \emph{It should 
      also be noted that the violations increase in 
      magnitude with a decrease in the time step.} 
    \label{Fig:TransientDMP_1D_min_max_conc}}
\end{figure}

%-----------------------------------------------------;
%  Figure 8: 1D problem: violation of non-negativity  ;
%-----------------------------------------------------;
\begin{figure}[!h]
  \centering
  \subfigure{
    \includegraphics[clip,scale=0.42]{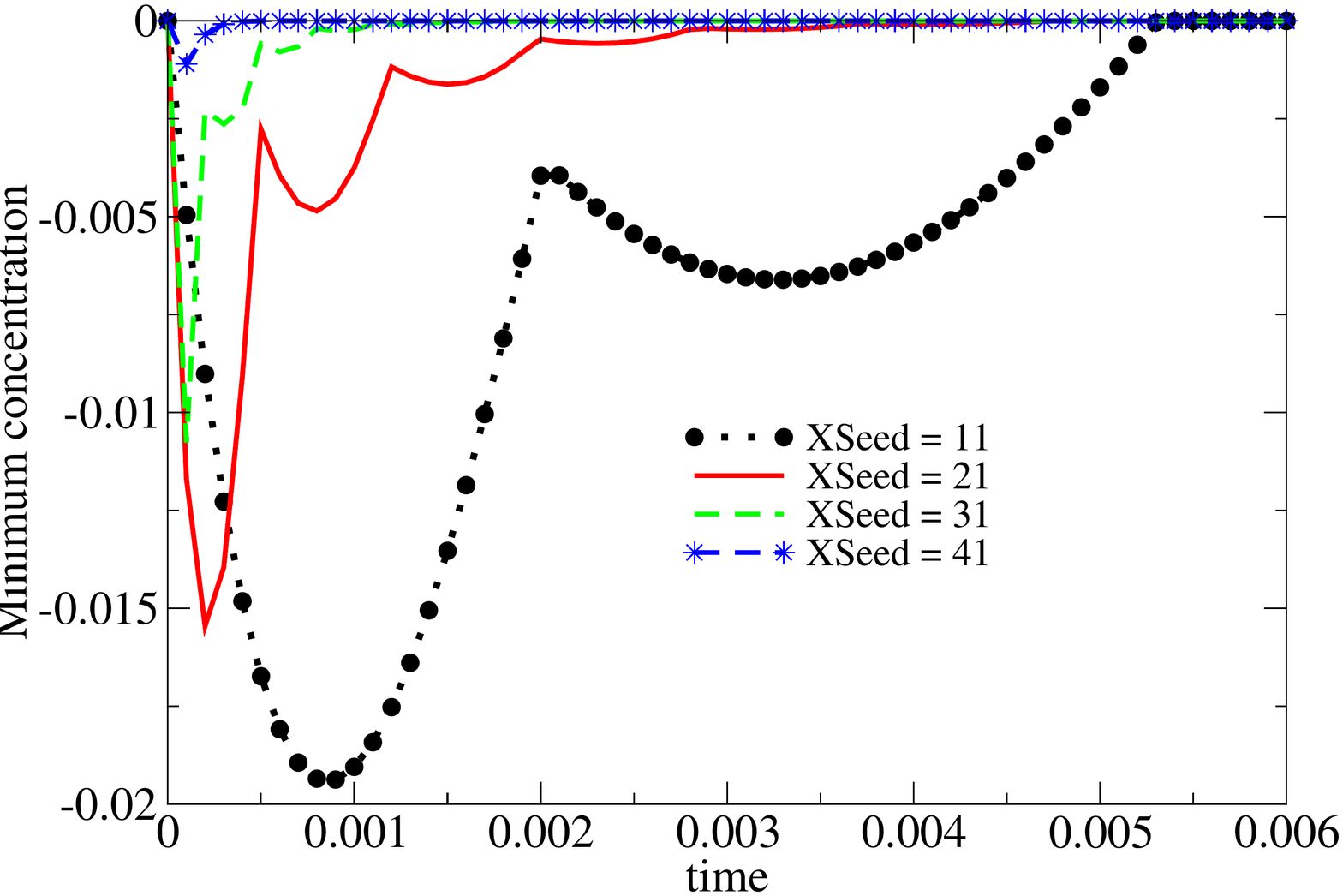}}        
    \subfigure{
    \includegraphics[clip,scale=0.42]{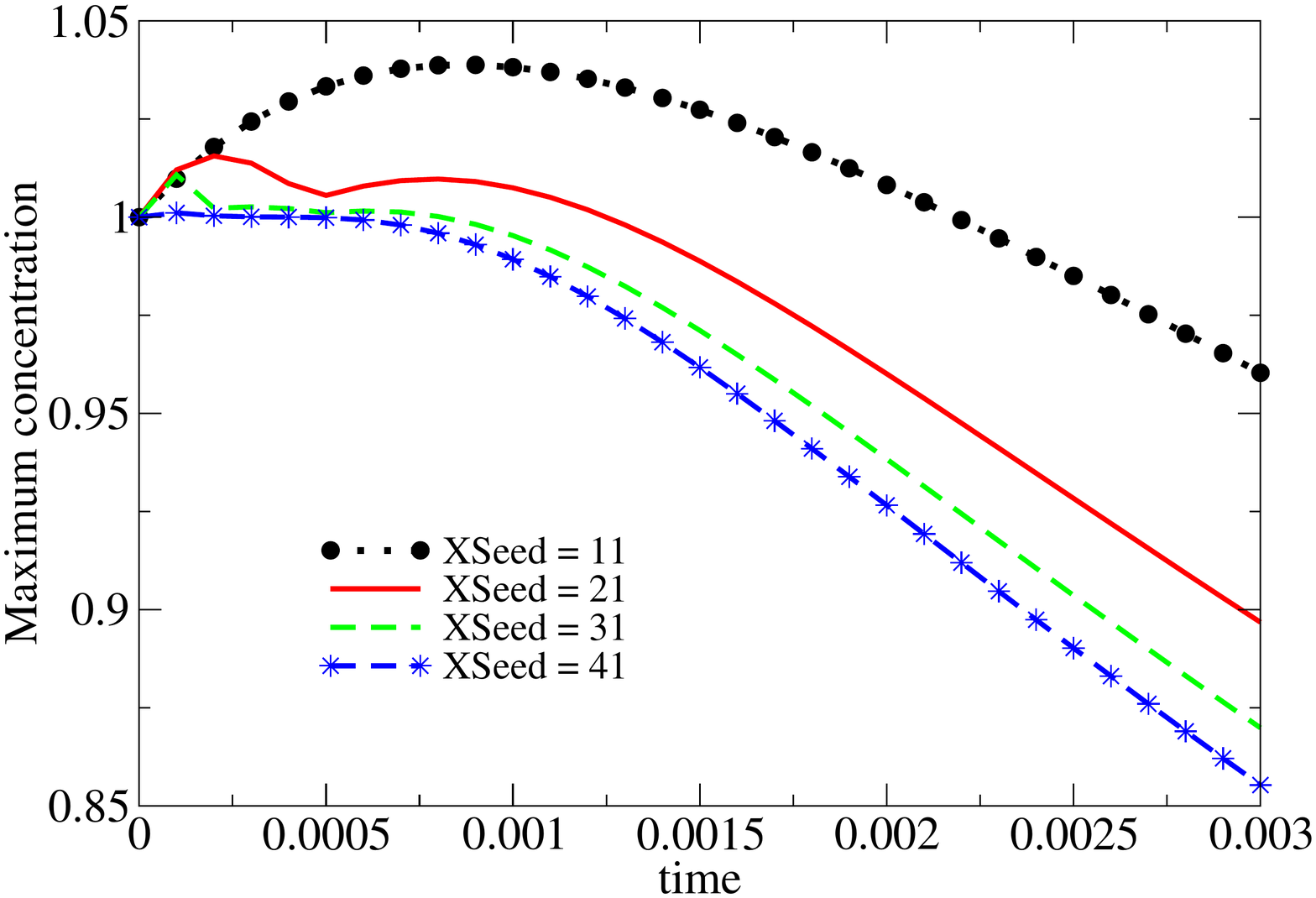}}         
  \caption{One-dimensional problem with non-uniform initial condition: 
    This figure shows the variation of the \emph{minimum} (top) 
    and \emph{maximum} (bottom) concentrations with respect to 
    time for a given time step and for different mesh refinements 
    under the \emph{single-field formulation}. The time step is 
    taken as $\Delta t = 10^{-4}$. According to the maximum 
    principle, the concentration should lie between 0 and 1. 
    Various meshes are used (XSeed = 11, 21, 31 and 41). 
    Note that XSeed denotes the number of nodes. The 
    single-field formulation violated both the minimum (which 
    is the non-negative constraint) and the maximum values 
    that are given by the maximum principle. 
    \emph{It should be noted that the violations of the maximum 
    principle decrease with mesh refinement if the diffusion 
    is anisotropic \cite{Nagarajan_Nakshatrala_IJNMF_2011_v67_p820}.} 
    \label{Fig:TransientDMP_1D_nonuniform_min_max_fixed_mesh}}
\end{figure}

%==============================================================;
%  Two-dimensional problem with non-uniform initial condition  ;
%==============================================================;

%--------------------------------------------------;
%  Figure 9: 2D problem: Pictorial representation  ;
%--------------------------------------------------;
\begin{figure}[htp]
  \centering
  \includegraphics[scale=0.85]{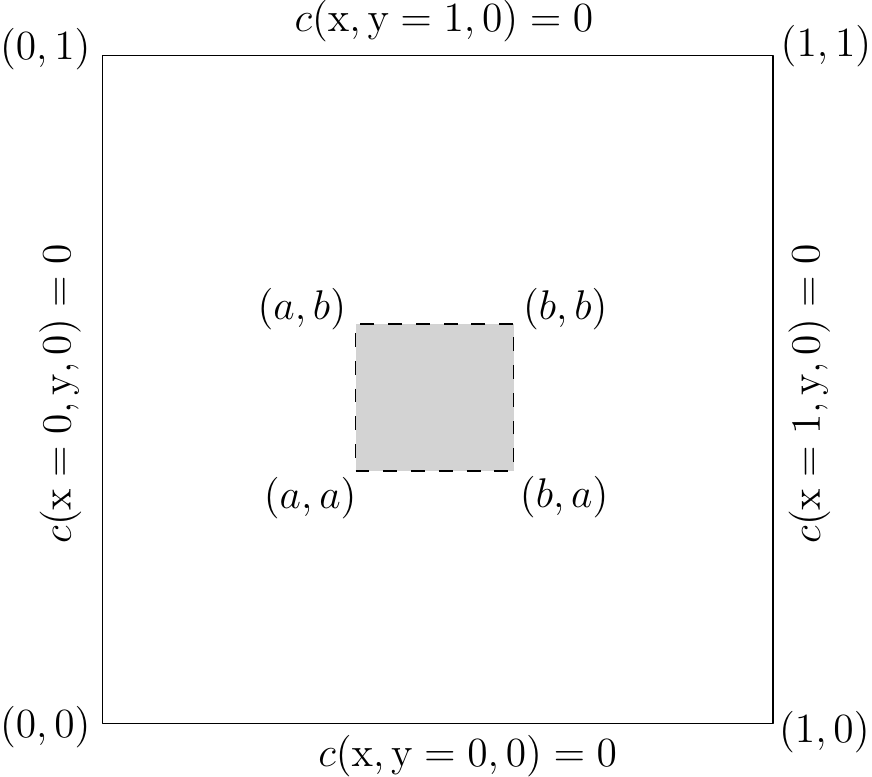}
  \caption{Two-dimensional problem with non-uniform initial condition: 
    A pictorial description of the problem described in subsection 
    \ref{Two_dimensional_problem}. The shaded region has an initial 
    concentration of $c(\mathrm{x},\mathrm{y},t=0)=1$, and the 
    remaining part of the domain has an initial condition of 
    $c(\mathrm{x},\mathrm{y},t=0)=0$. Homogeneous Dirichlet 
    boundary condition is prescribed on the entire boundary. 
    \label{Fig:TransientDMP_2D_pictorial_rep}}
\end{figure}

%------------------------------------------------;
%  Figure 10: 2D problem: Finite element meshes  ;
%------------------------------------------------;
\begin{figure}[htp]
  \centering
  \subfigure[T3 finite element mesh]{
  \includegraphics[scale=0.4]{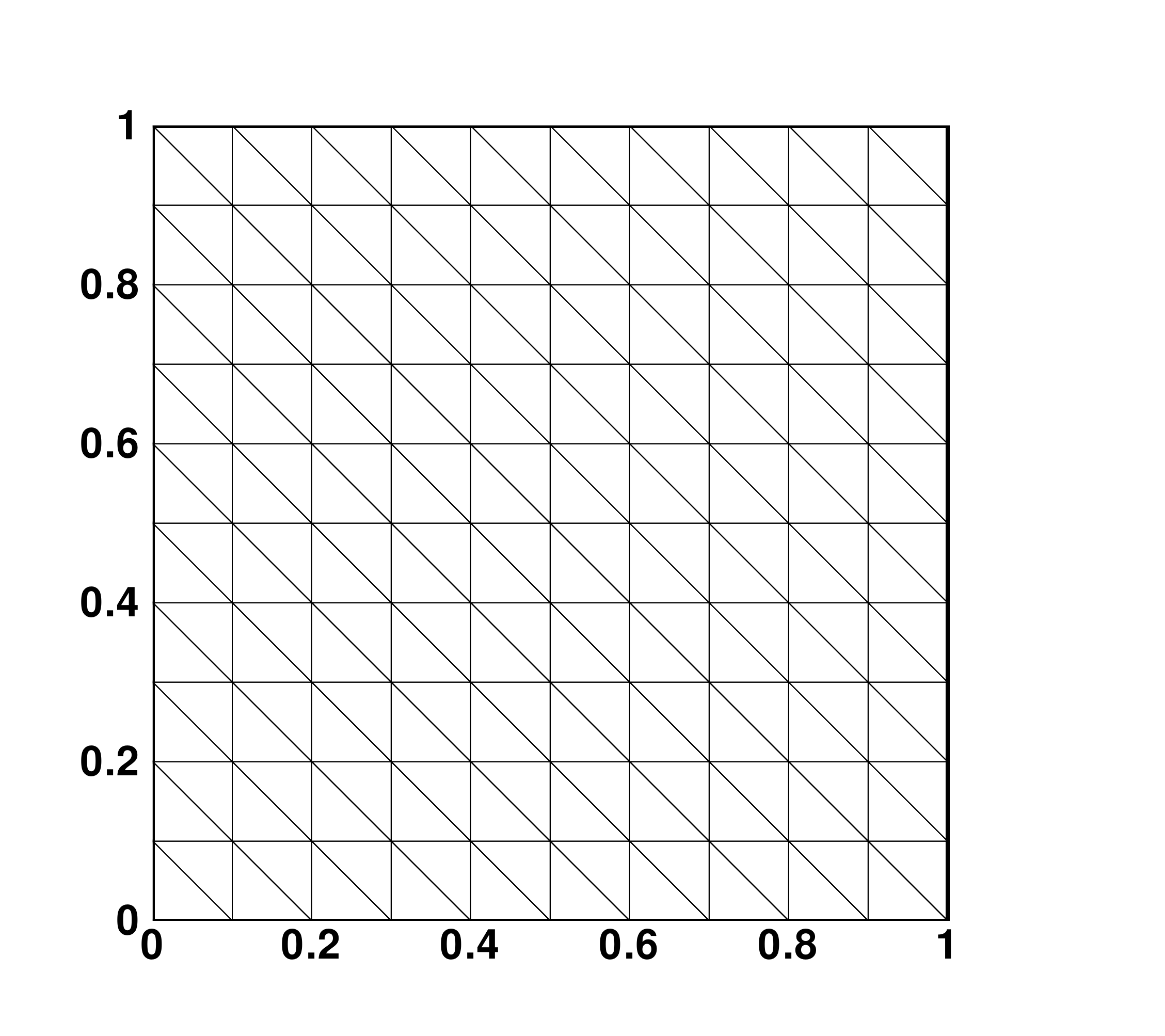}}
  \subfigure[Q4 finite element mesh]{
  \includegraphics[scale=0.4]{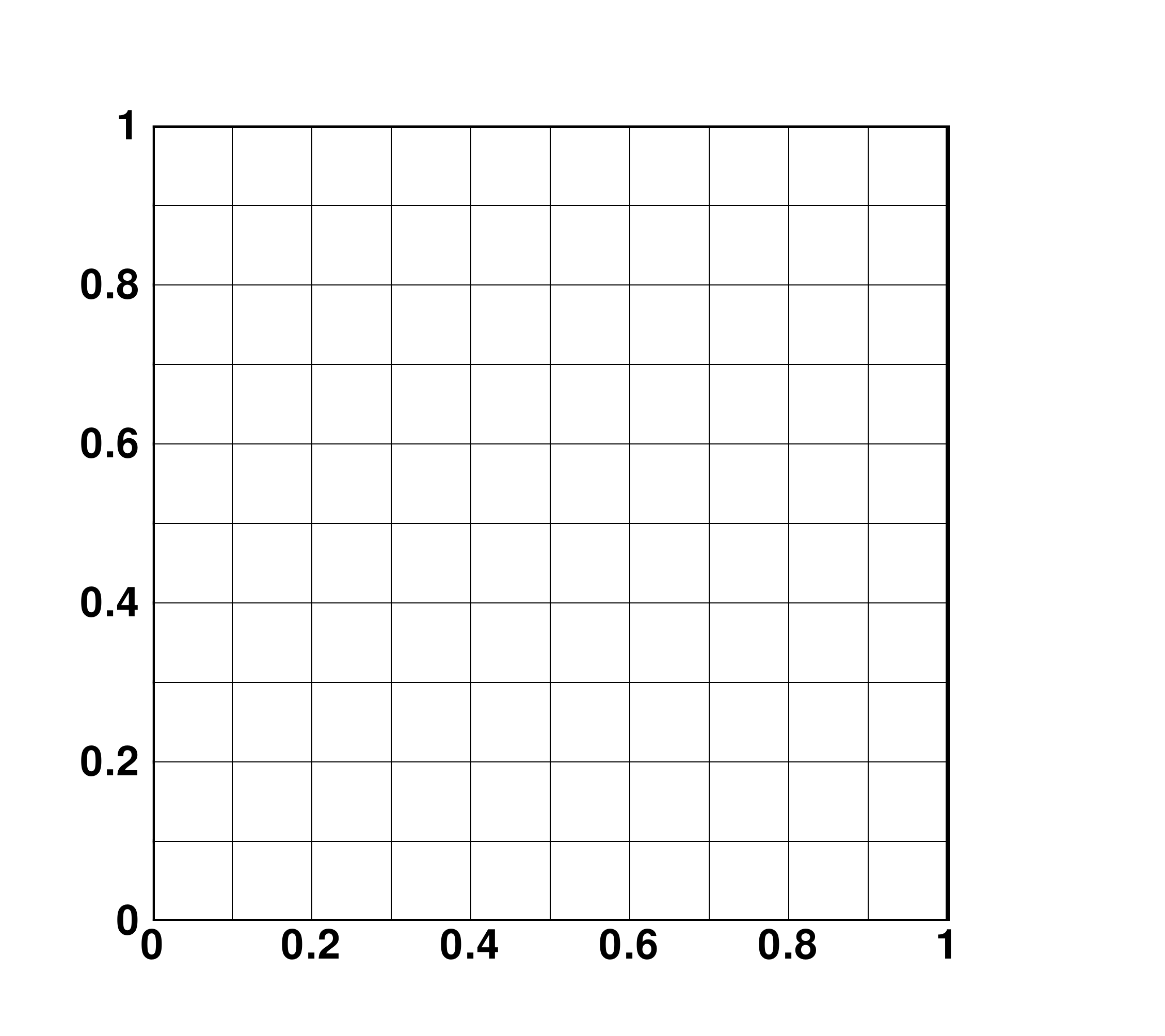}}
  \caption{Two-dimensional problem with non-uniform initial condition: 
    This figure shows the typical meshes used in the $h$-numerical 
    convergence studies. The meshes in this figure have XSeed = 
    YSeed = 11, which denote the number of nodes along the 
    x-direction and y-direction. The convergence study employs 
    hierarchical meshes. 
    \label{Fig:TransientDMP_2D_convergence_meshes}}
\end{figure}

%----------------------------------------------------;
%  Figure 11: 2D problem: convergence (gamma = 1.0)  ;
%----------------------------------------------------;
\begin{figure}[htp]
  \centering
  \subfigure[Galerkin T3]{
  \includegraphics[scale=0.27,clip]{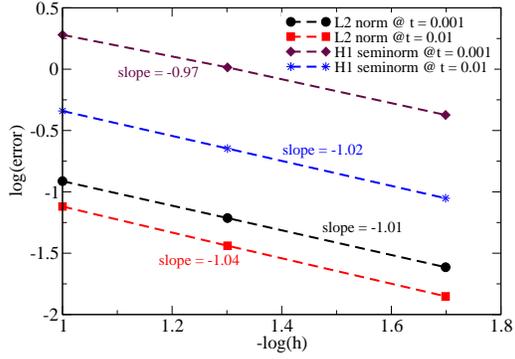}}
  \subfigure[Proposed methodology T3]{
  \includegraphics[scale=0.27,clip]{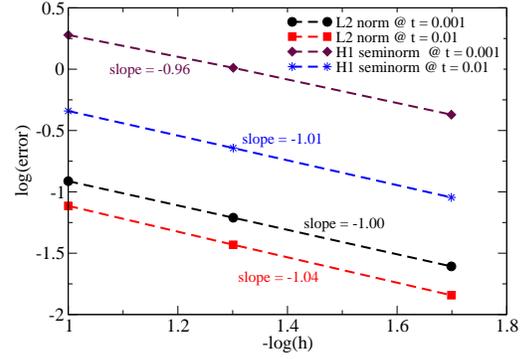}}
    \subfigure[Galerkin Q4]{
  \includegraphics[scale=0.27,clip]{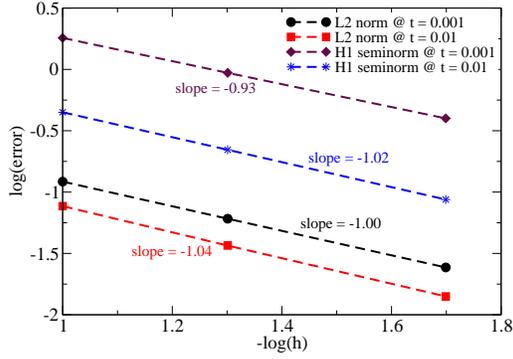}}
    \subfigure[Proposed methodology Q4]{
  \includegraphics[scale=0.27,clip]{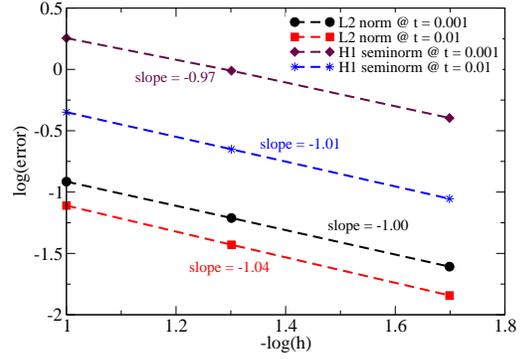}}
  \caption{Two-dimensional problem with non-uniform initial condition: 
    This figure illustrates the numerical convergence of the Galerkin 
    single-field formulation and the proposed methodology for 
    three-node triangular element (T3) and four-node quadrilateral 
    element (Q4). We have taken $\gamma = 1$. The numerical 
    convergence is performed at two time levels $t = 0.001$ and 
    $t = 0.01$. A hierarchy of meshes are employed in the numerical 
    study. The initial mesh has XSeed = YSeed = 11, which denote 
    the number of nodes along the x-direction and y-direction. The 
    corresponding time step for this mesh is taken as $\Delta t = 0.001$. 
    The mesh and the time step are simultaneously refined as $\Delta t 
    \propto {(\Delta \mathrm{x})}^2$. The terminal rates of convergence 
    in $L_{2}$-norm and $H^{1}$-seminorm are indicated in the figure. 
    \label{Fig:TransientDMP_2D_proposed_convergence}}
\end{figure}

%-------------------------------------------------------------------;
%  Figure 12: 2D problem: violation of the non-negative constraint  ;
%-------------------------------------------------------------------;
\begin{figure}[!h]
  \centering
  \subfigure[20 $\times$ 20 equally spaced three-node triangular mesh]{
    \includegraphics[scale=0.35]{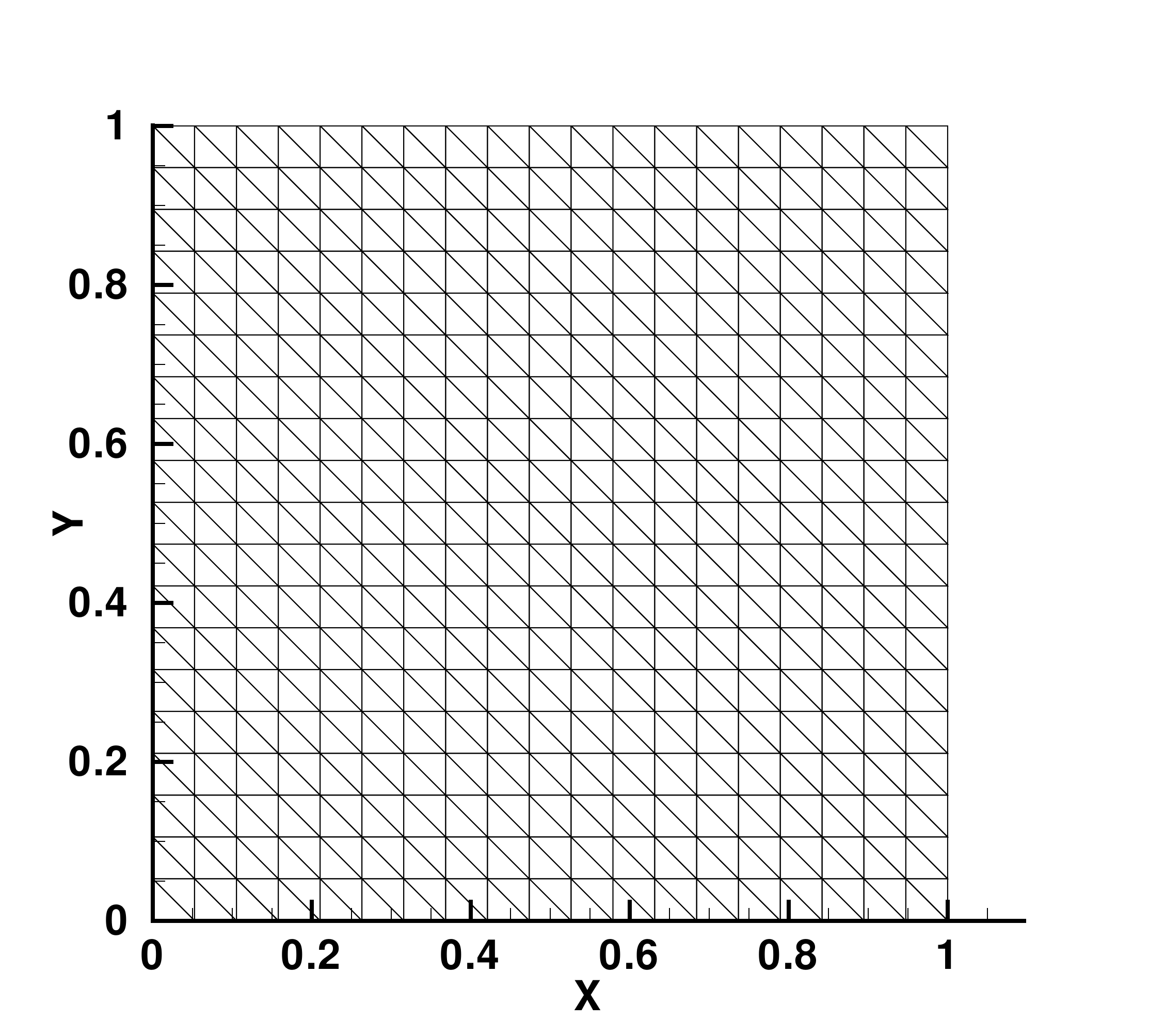}}        
  \subfigure[Violation of the non-negative constraint]{
    \includegraphics[scale=0.35]{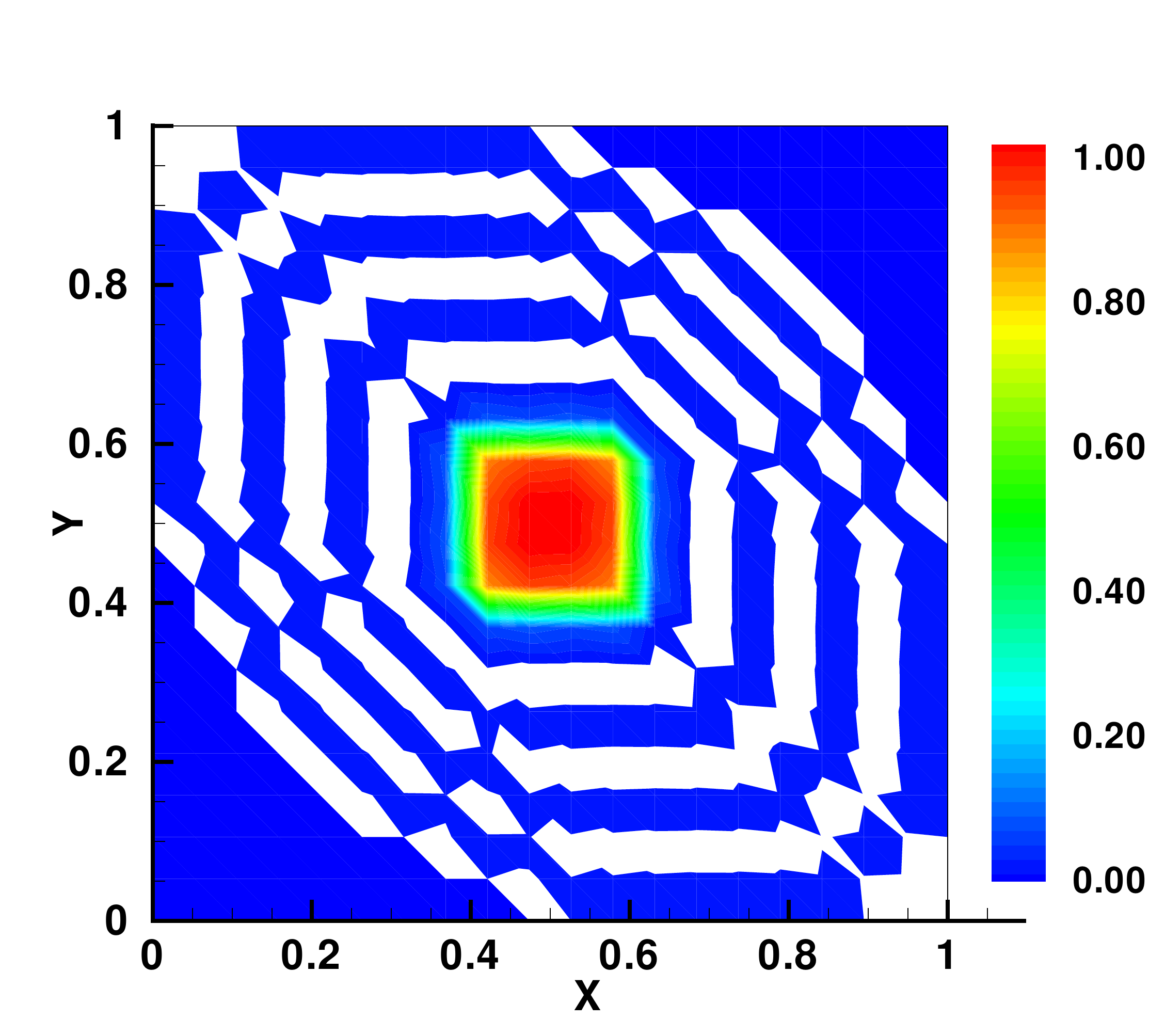}} 
  \subfigure[Violation of the maximum principle]{
    \includegraphics[scale=0.35]{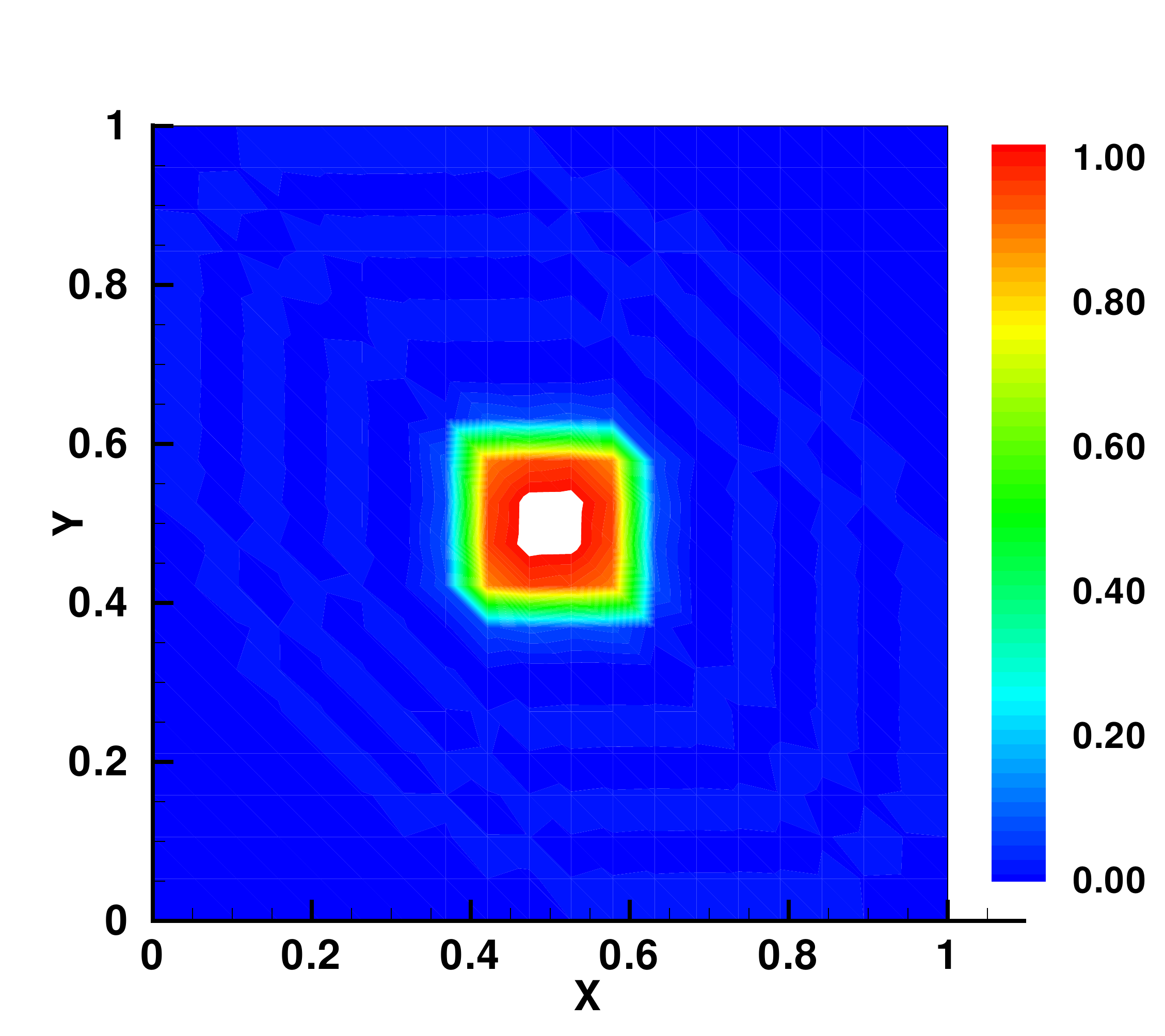}} 
  \subfigure[Proposed methodology]{
    \includegraphics[scale=0.35]{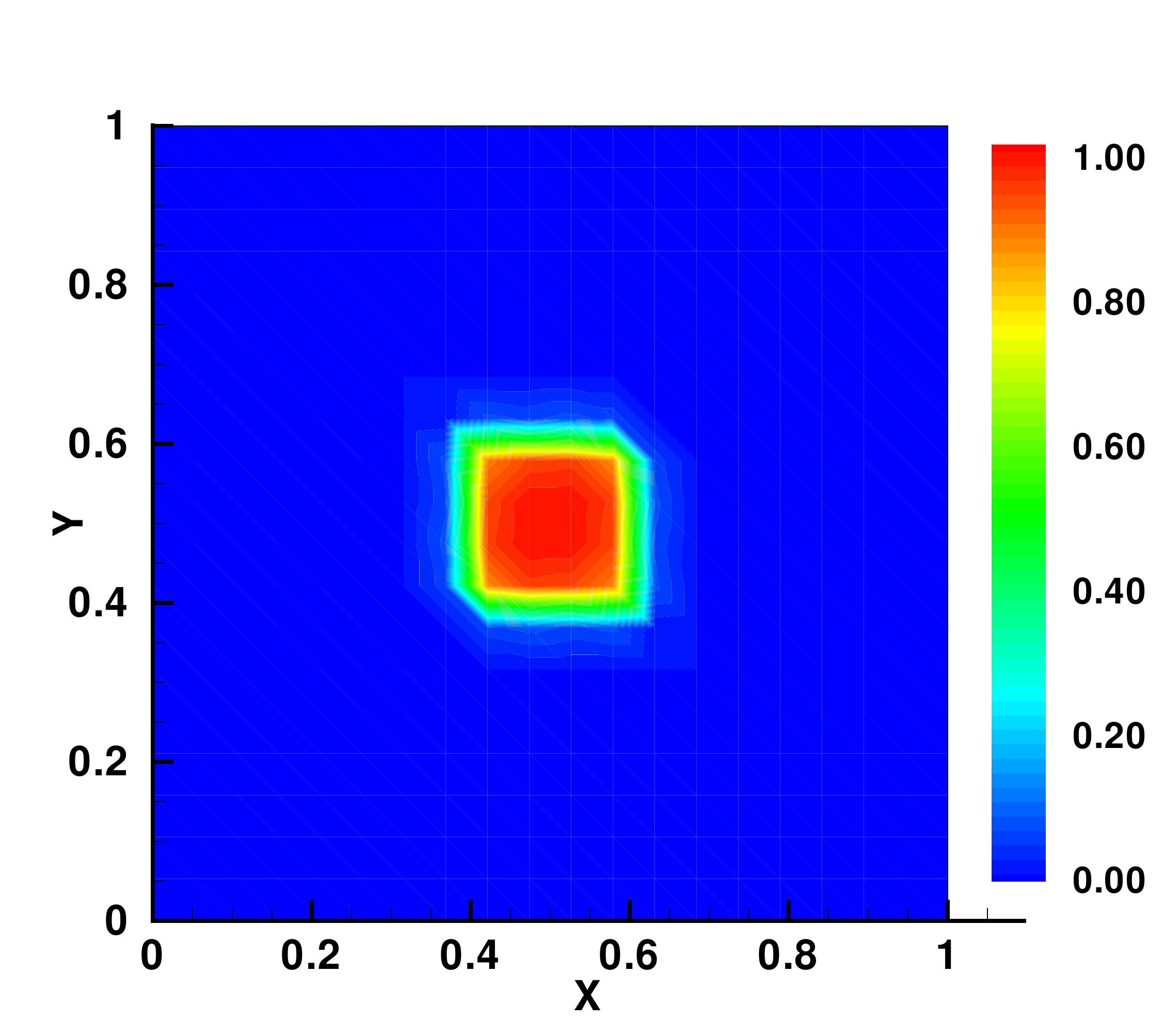}}        
  \caption{Two-dimensional problem with non-uniform initial 
    condition: This figure compares the concentrations obtained 
    from the single-field formulation and the proposed methodology 
    with the analytical solution at time level $t = \Delta t = 
    10^{-4}$. 
    Subfigure (b) shows that the single-field formulation 
    violates the non-negative constraint, as 36\% of nodes 
    have negative concentrations. The obtained minimum 
    concentration is $-0.01221$. 
    Subfigure (c) shows that the single-field formulation 
    violates the maximum principle, as  1\% of nodes having 
    concentrations greater than unity. The obtained maximum 
    concentration is $1.02039$. 
    Subfigure (d) shows that the concentration obtained from 
    the proposed methodology satisfies the maximum principle, 
    and the non-negative constraint. In subfigure (b), the 
    regions with negative concentrations are indicated in 
    white color. In subfigure (c), the regions with 
    concentrations greater than unity are indicated 
    in white color. 
    \label{Fig:TransientDMP_2D_min_conc}}
\end{figure}

%------------------------------------------------------;
%  Figure 13: 2D problem: violation of DMP in Pdetool  ;
%------------------------------------------------------;
\begin{figure}[!h]
  \centering
  \subfigure[Computational mesh]{
    \includegraphics[scale=0.35]{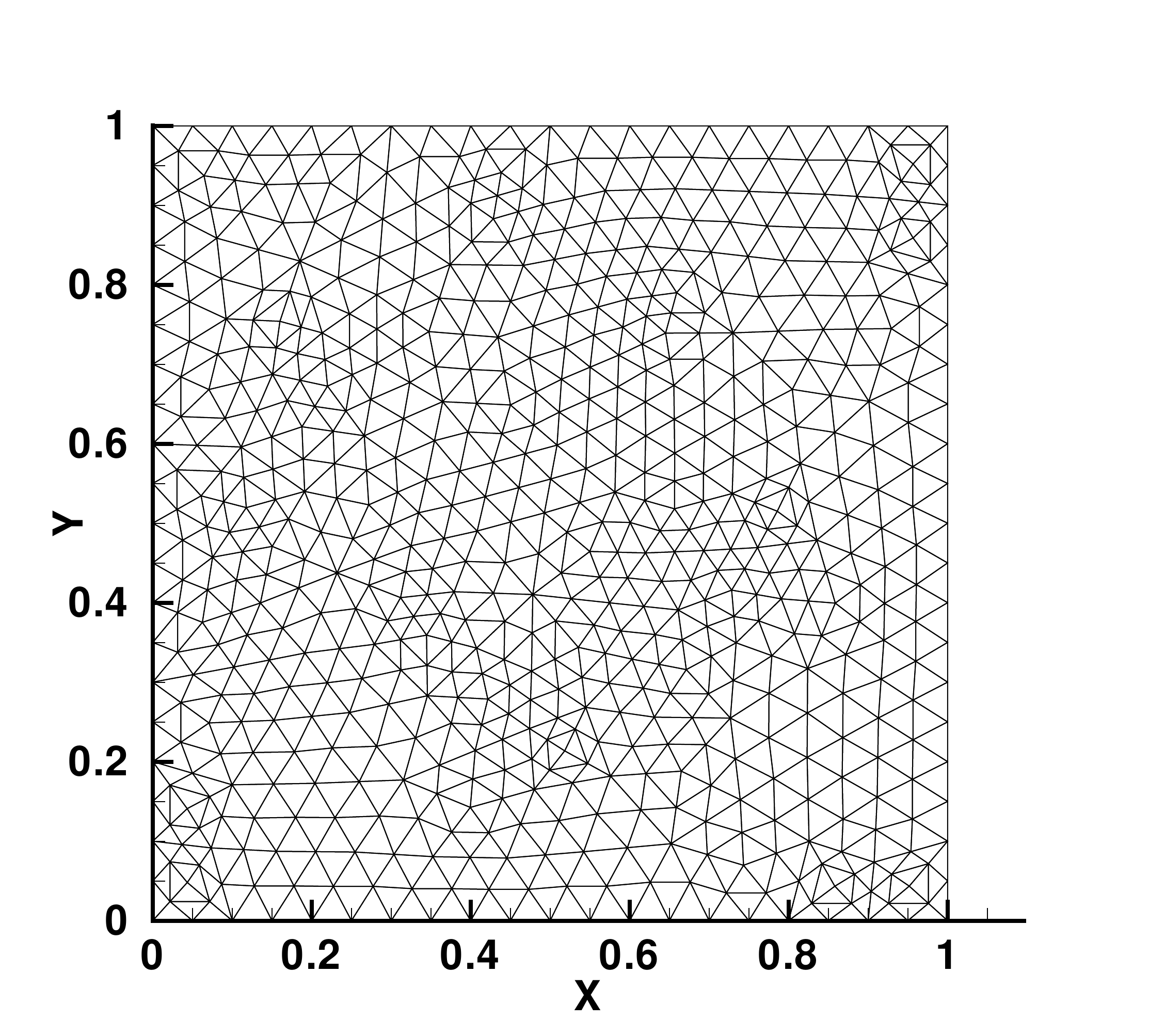}}        
  \subfigure[Violation of the non-negative constraint]{
    \includegraphics[scale=0.35]{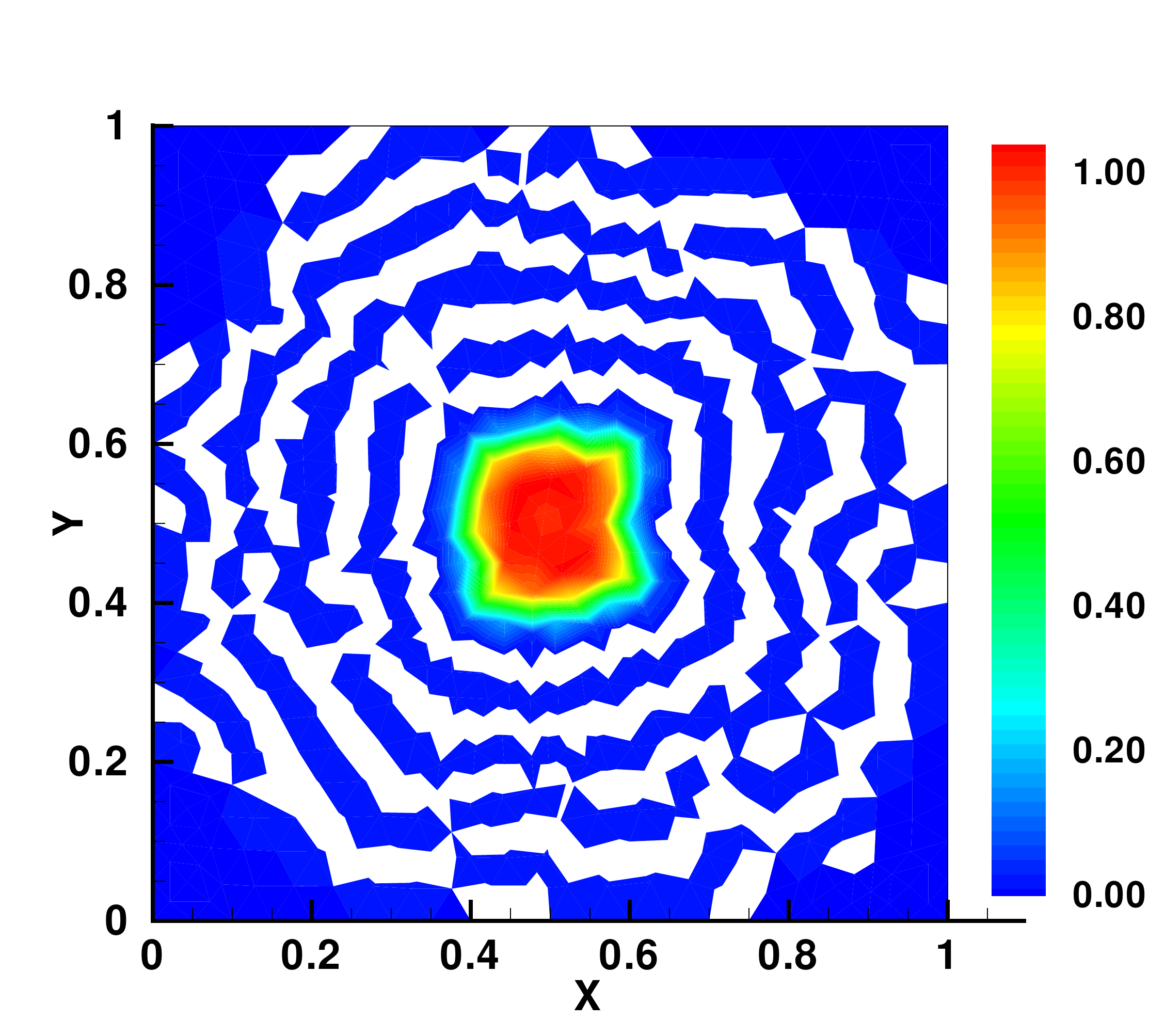}} 
  \subfigure[Violation of the maximum principle]{
    \includegraphics[scale=0.35]{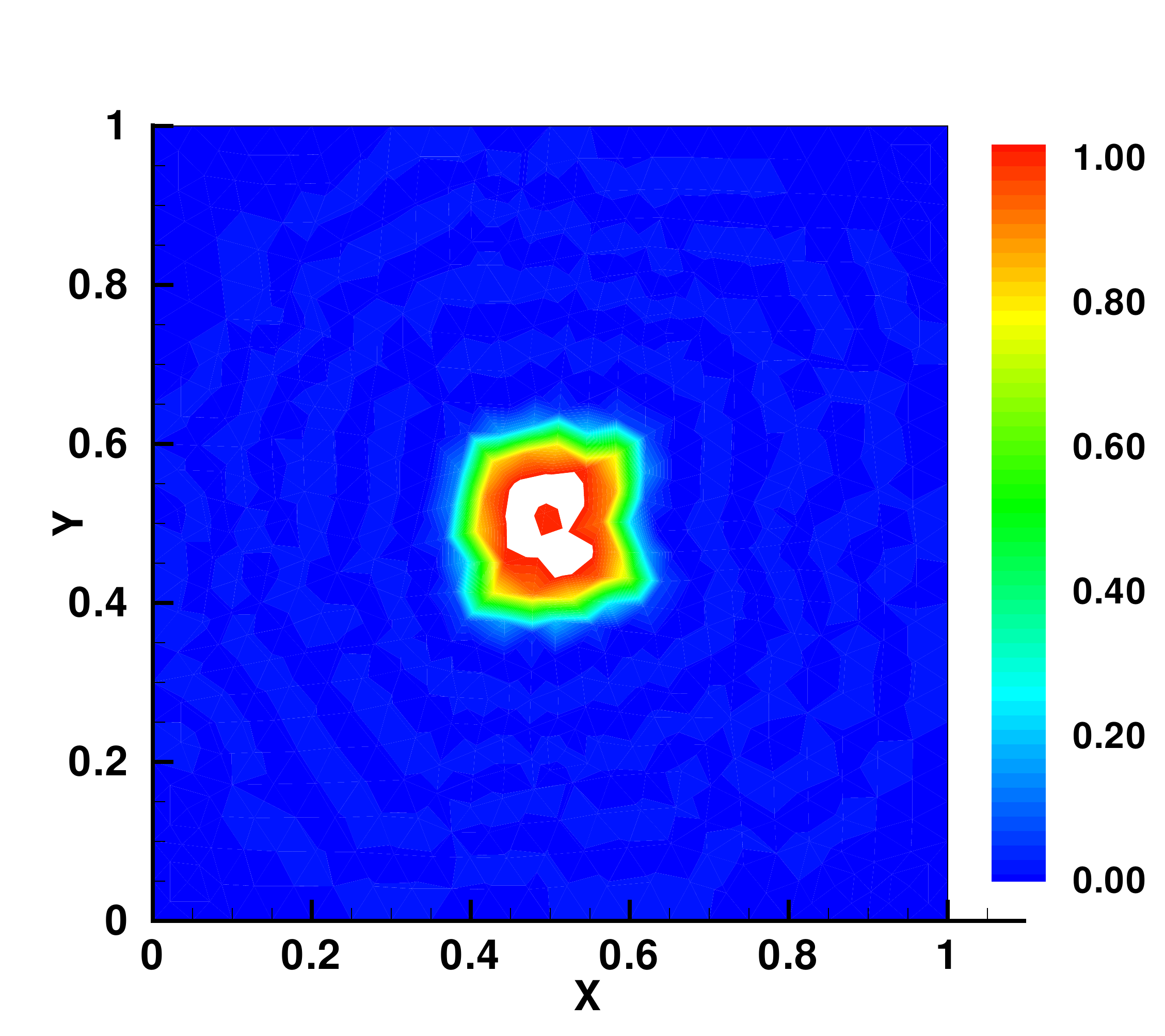}} 
    \subfigure[Proposed methodology]{
    \includegraphics[scale=0.35]{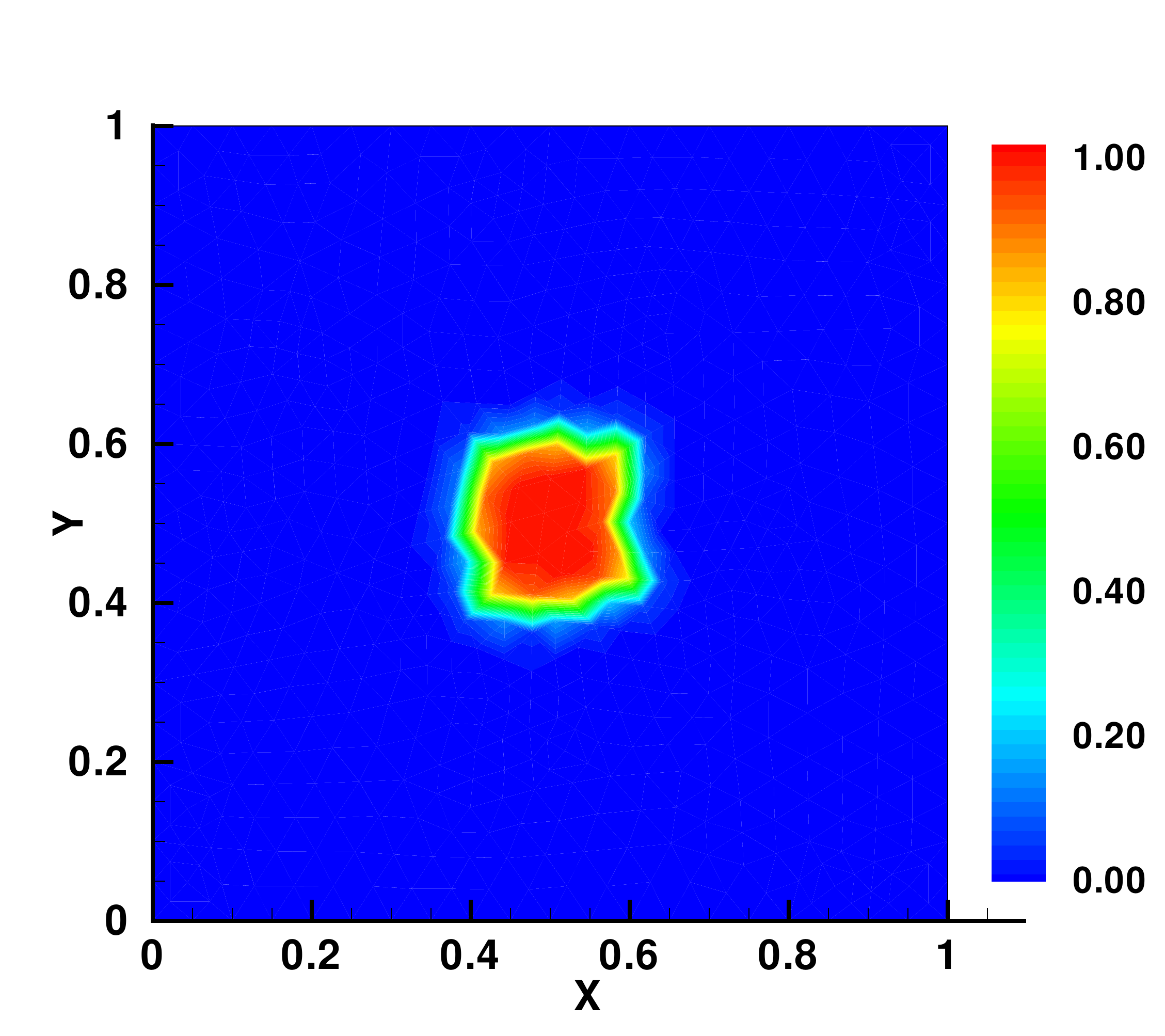}} 
  \caption{Two-dimensional problem with non-uniform initial 
    condition: This figure compares the numerical solutions 
    from MATLAB's PDE Toolbox and the proposed methodology 
    at time level $t = \Delta t = 10^{-4}$. 
    Subfigure (a) shows the computational mesh used in 
    the numerical simulation. 
    Subfigure (b) shows that numerical solution from 
    the MATLAB's PDE Toolbox violates the non-negative 
    constraint, as 40\% of the nodes have negative 
    concentrations. The regions with negative 
    concentrations are indicated in white color. 
    The obtained minimum concentration is $-0.0339$.
    Subfigure (c) shows that the numerical solution from 
    MATLAB's PDE Toolbox violates the maximum principle, 
    as 1.2\% of nodes have concentrations greater than 
    unity. The regions with concentrations greater than 
    unity are indicated in white color. The obtained 
    maximum concentration is $1.0397$. 
    Subfigure (d) shows that the proposed methodology 
    satisfies the maximum principle and the non-negative 
    constraint on the computational mesh generated by 
    MATLAB.} 
  \label{Fig:TransientDMP_2D_pdetool}
\end{figure}

\clearpage 
\newpage 

%----------------------------;
%  Figure 14: COMSOL meshes  ;
%----------------------------;
\begin{figure}
\subfigure{
\includegraphics[clip,scale=0.8]{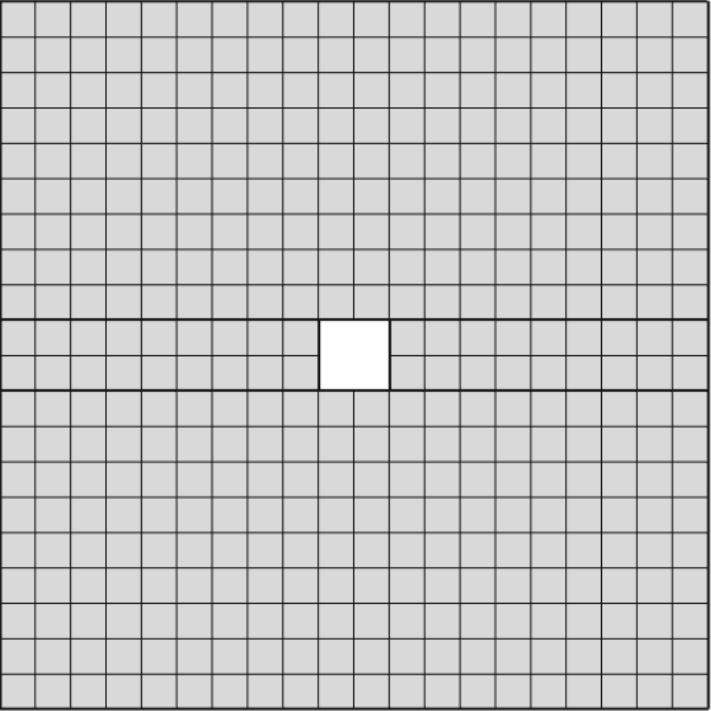}}
\subfigure{
\includegraphics[clip,scale=0.8]{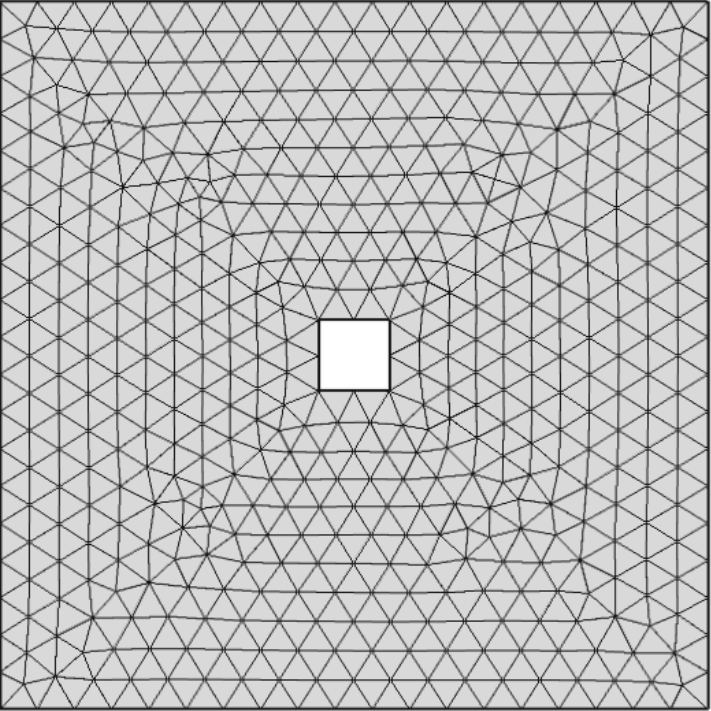}}
\caption{Anisotropic diffusion in a square plate with a hole: This 
figure shows the meshes employed in the numerical simulations 
using COMSOL \cite{multiphysics2012version}. The left figure 
shows a structured mesh based on four-node quadrilateral 
elements, and the right figure shows an unstructured mesh 
based on three-node triangular elements. 
\label{Fig:TransientDMP_COMSOL_meshes}}
\end{figure}

%-------------------------------------------;
%  Figure 15: COMSOL minimum concentration  ;
%-------------------------------------------;
\begin{figure}
\subfigure{
\includegraphics[clip,scale=0.4]{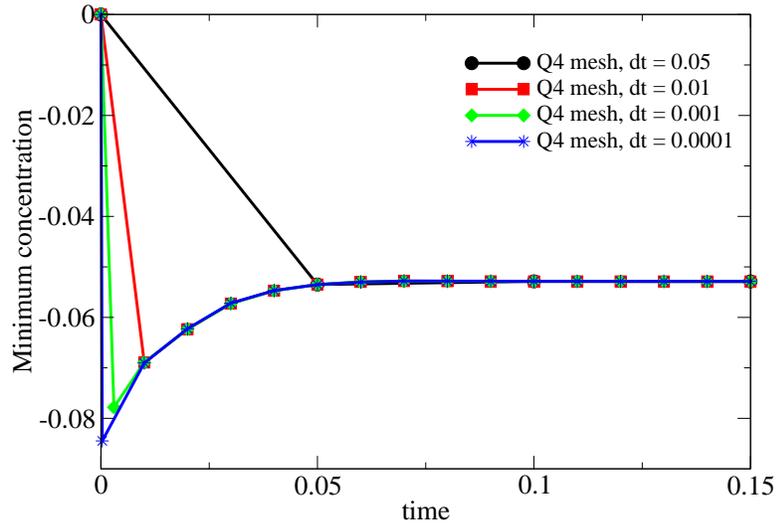}}
\subfigure{
\includegraphics[clip,scale=0.4]{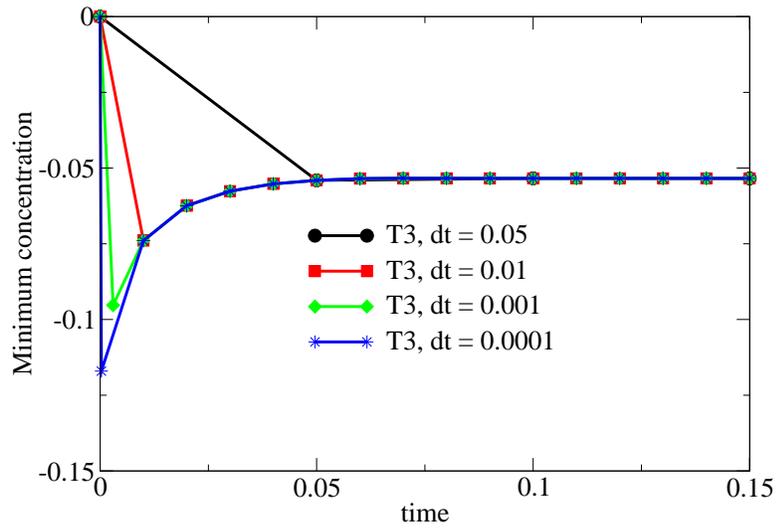}}
\caption{Anisotropic diffusion in square plate with a hole: This figure 
shows the variation of minimum concentration with time under the 
meshes shown in Figure \ref{Fig:TransientDMP_COMSOL_meshes}. 
COMSOL \cite{multiphysics2012version} is employed in the numerical 
simulation. The solution is very close to the steady-state response for 
time greater than 0.05. \label{Fig:TransientDMP_COMSOL_min_conc}}
\end{figure}

%---------------------------------;
%  Figure 16: COMSOL and Q4 mesh  ;
%---------------------------------;
\begin{figure}
\subfigure[$\Delta t = 0.0001$, $ t= 3 \Delta t$ (minimum = -0.08447)]{
\includegraphics[clip,scale=1]{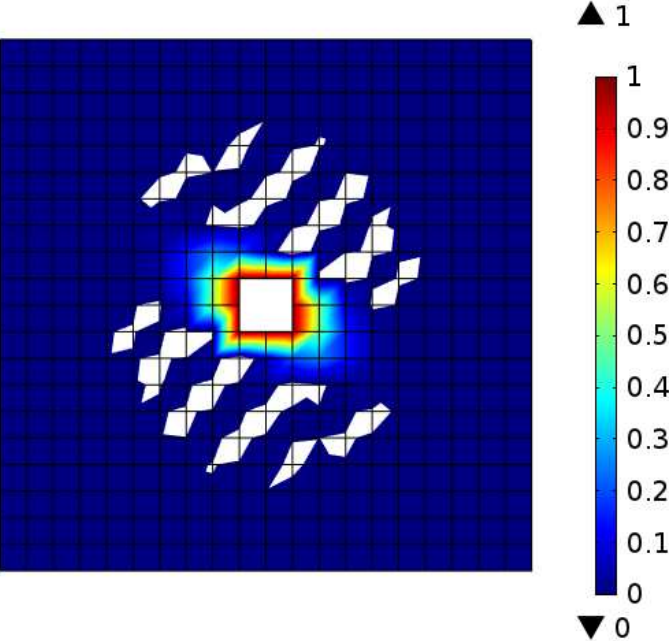}}
\subfigure[$\Delta t = 0.0001$, t = 0.05 (minimum = -0.05351)]{
\includegraphics[clip,scale=1]{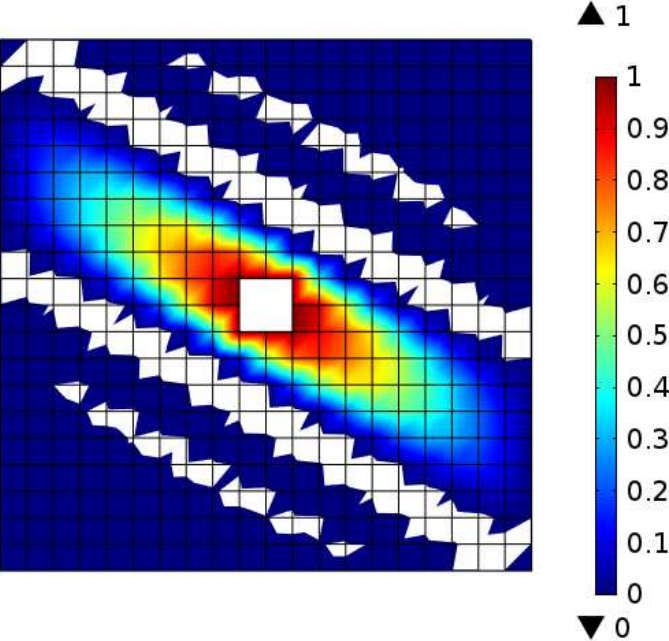}}
\subfigure[$\Delta t = 0.001$, $t = 3 \Delta t$ (minimum = -0.07781)]{
\includegraphics[clip,scale=1]{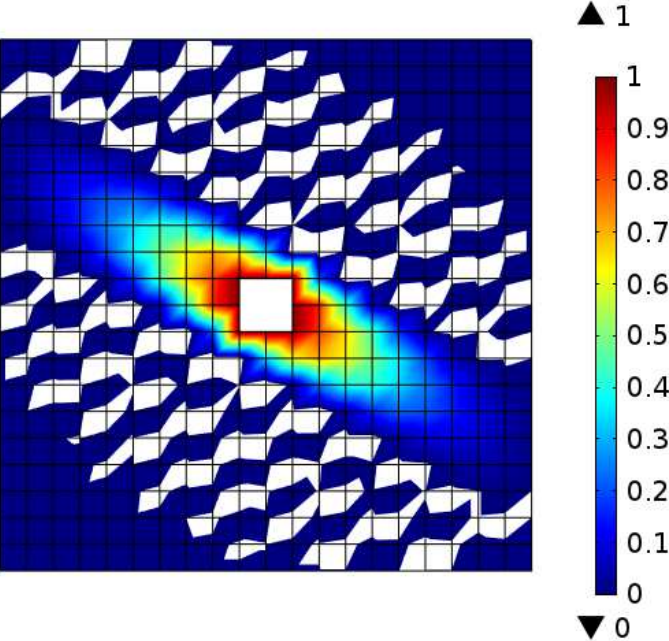}}
\subfigure[$\Delta t = 0.001$, t = 0.05 (minimum = -0.05350)]{
\includegraphics[clip,scale=1]{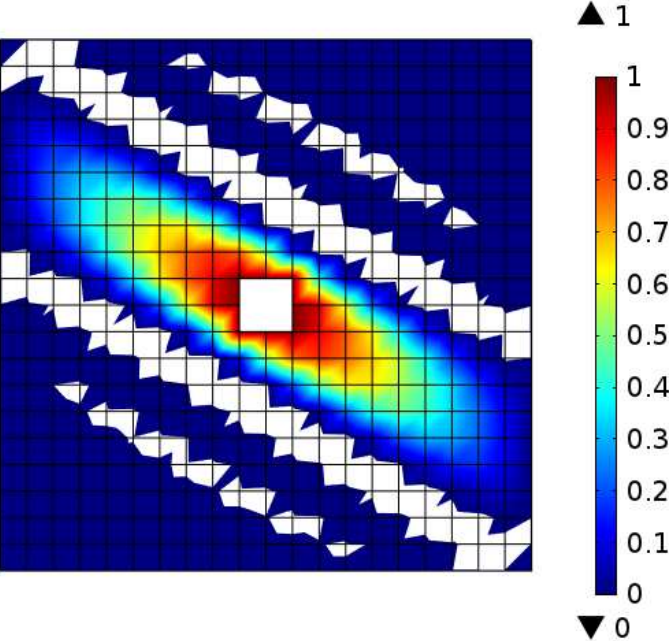}}
\caption{Anisotropic diffusion in square plate with a hole: Concentration 
profiles using COMSOL \cite{multiphysics2012version} by employing 
\emph{structured four-node quadrilateral mesh}. The finite element 
mesh is also shown. The numerical results clearly violated the 
non-negative constraint for the concentration. The regions that 
violated the non-negative constraint are indicated in white color. 
\label{Fig:TransientDMP_COMSOL_Q4}}
\end{figure}

%---------------------------------;
%  Figure 17: COMSOL and T3 mesh  ;
%---------------------------------;
\begin{figure}
\subfigure[$\Delta t = 0.0001$, $ t= 3 \Delta t$ (minimum = -0.1170)]{
\includegraphics[clip,scale=0.7]{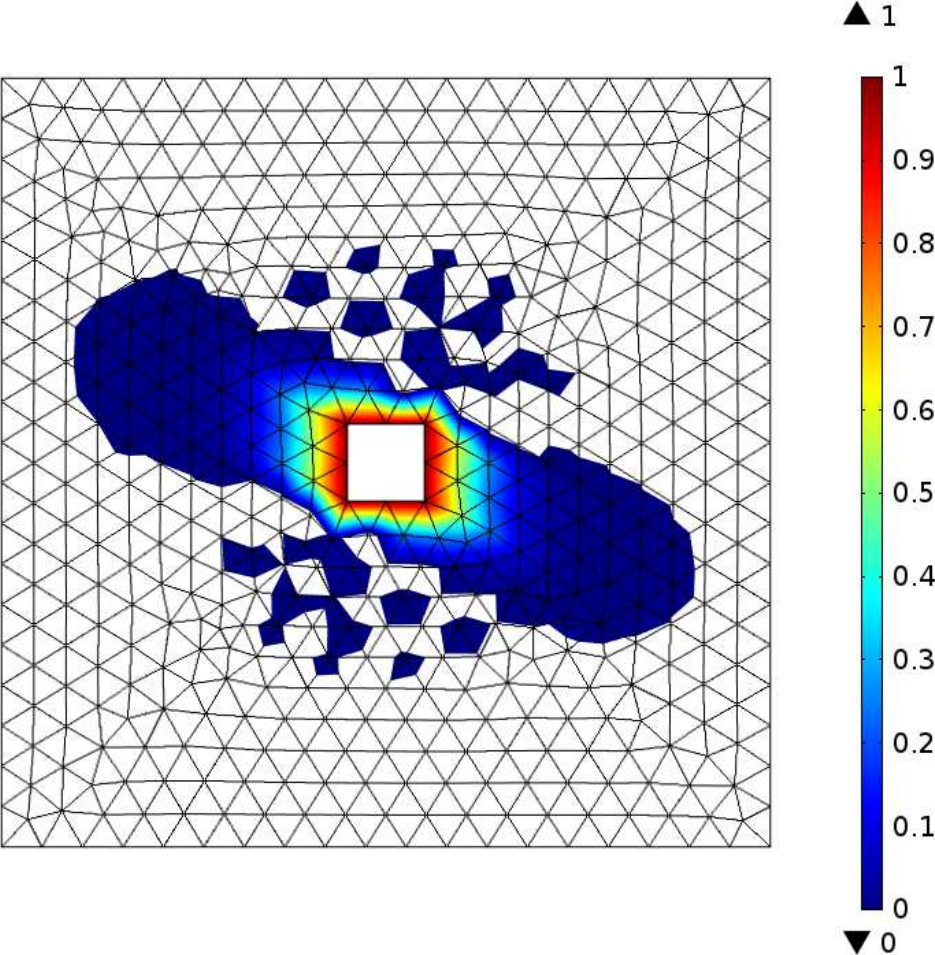}}
\subfigure[$\Delta t = 0.0001$, t = 0.05 (minimum = -0.05406)]{
\includegraphics[clip,scale=0.7]{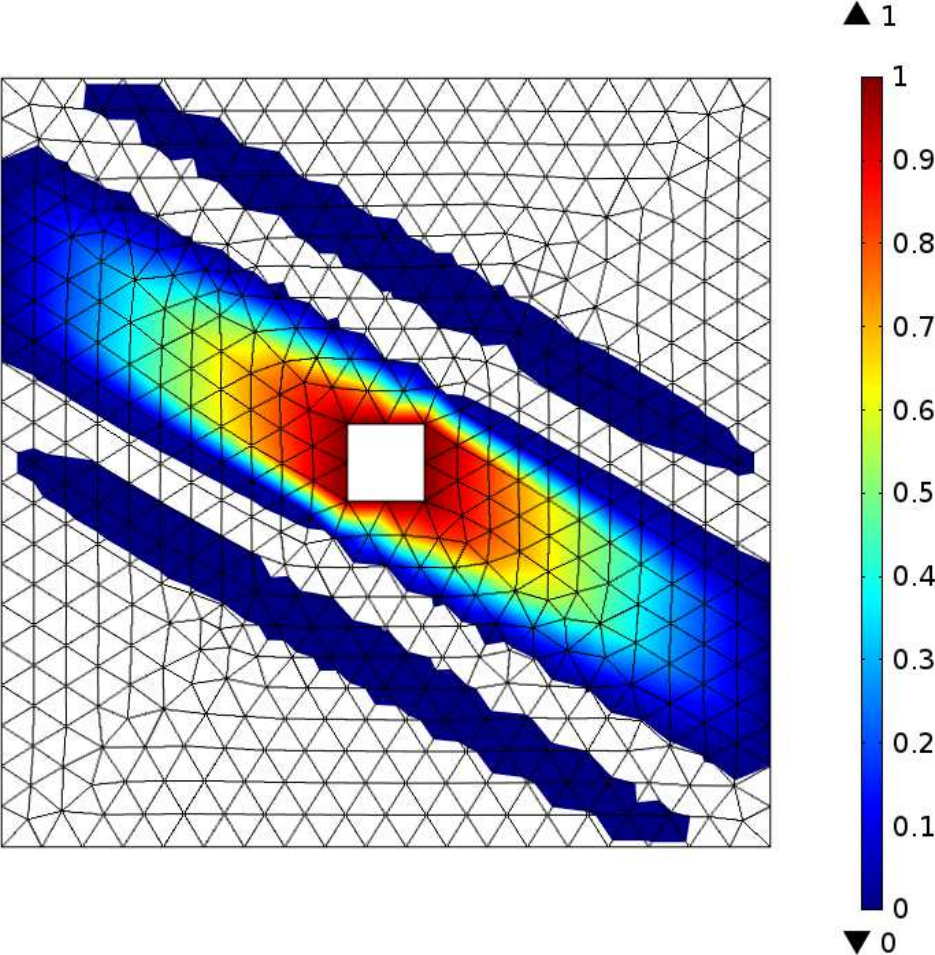}}
\subfigure[$\Delta t = 0.001$, $t = 3 \Delta t$ (minimum = -0.09527)]{
\includegraphics[clip,scale=0.7]{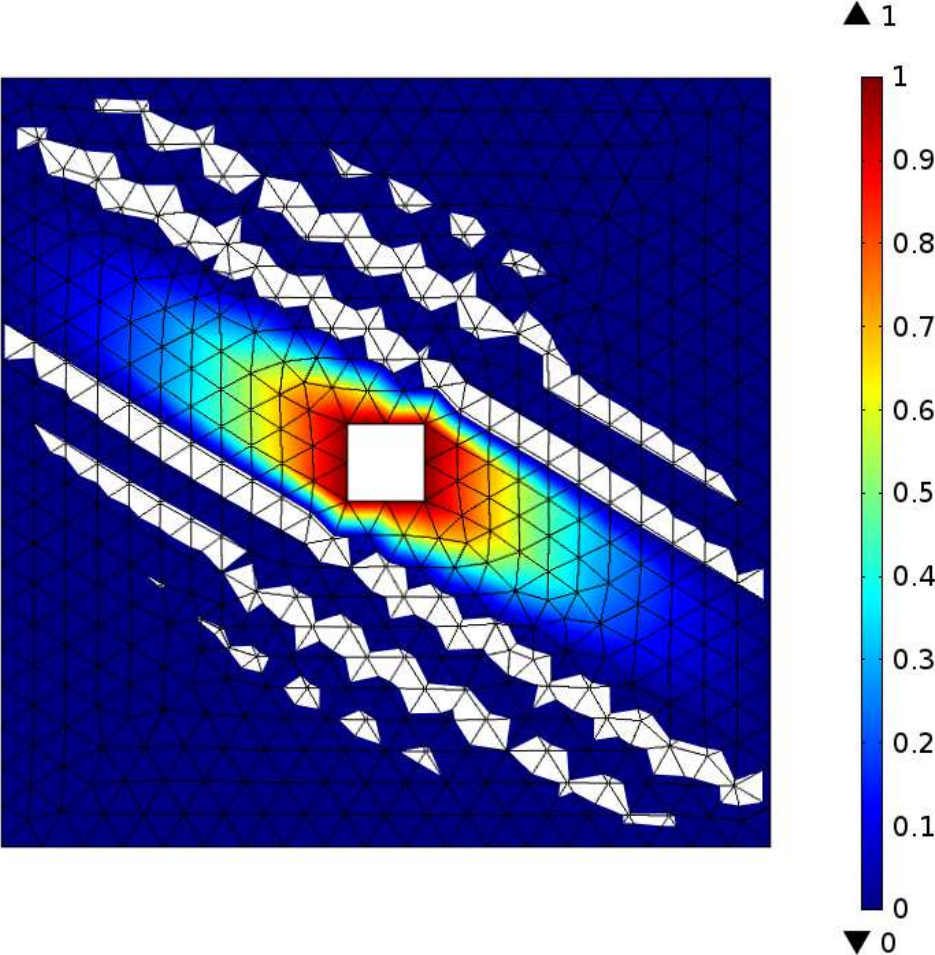}}
\subfigure[$\Delta t = 0.001$, t = 0.05 (minimum = -0.05406)]{
\includegraphics[clip,scale=0.7]{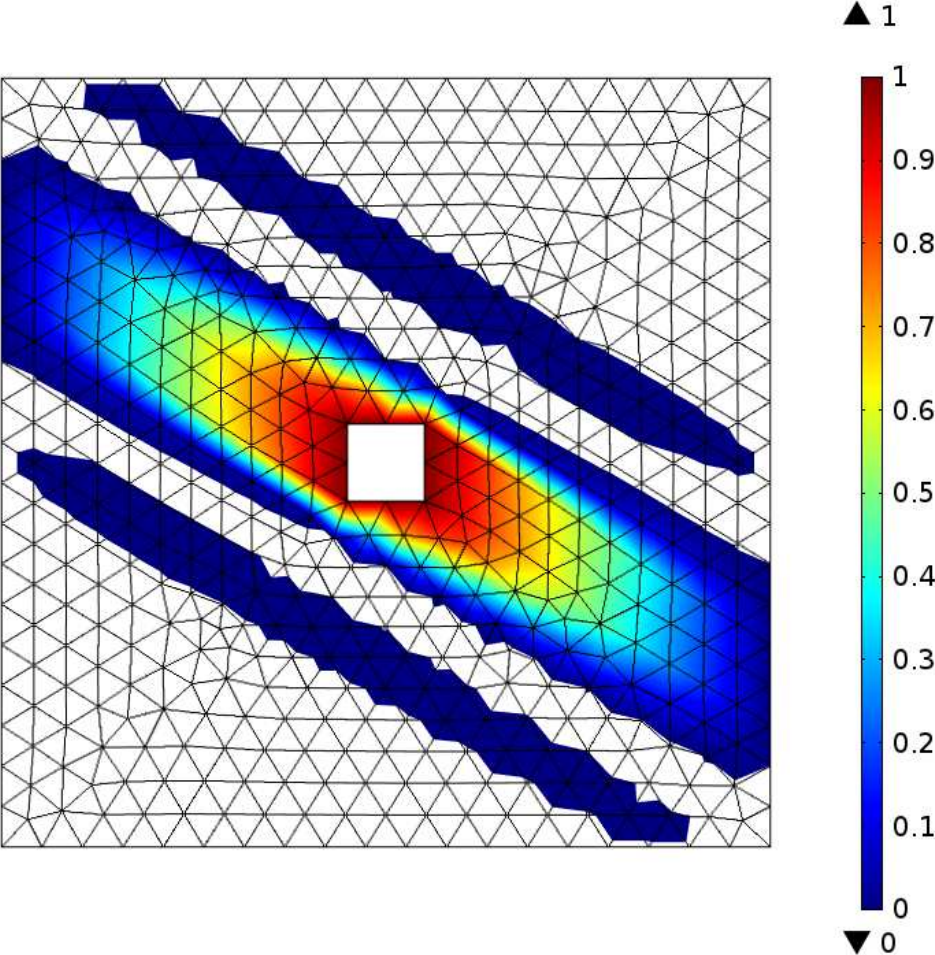}}
\caption{Anisotropic diffusion in square plate with a hole: Concentration 
profiles using COMSOL \cite{multiphysics2012version} by employing 
\emph{unstructured three-node triangular mesh}. The finite element 
mesh is also shown. The numerical results clearly violated the 
non-negative constraint for the concentration. The regions that 
violated the non-negative constraint are indicated in white color. 
\label{Fig:TransientDMP_COMSOL_T3}}
\end{figure}

%-----------------------------;
%  Figure 18: QP code meshes  ;
%-----------------------------;
\begin{figure}
\subfigure{
\label{Fig:TransientDMP_QP_code_Q4_mesh}
\includegraphics[clip,scale=0.35]{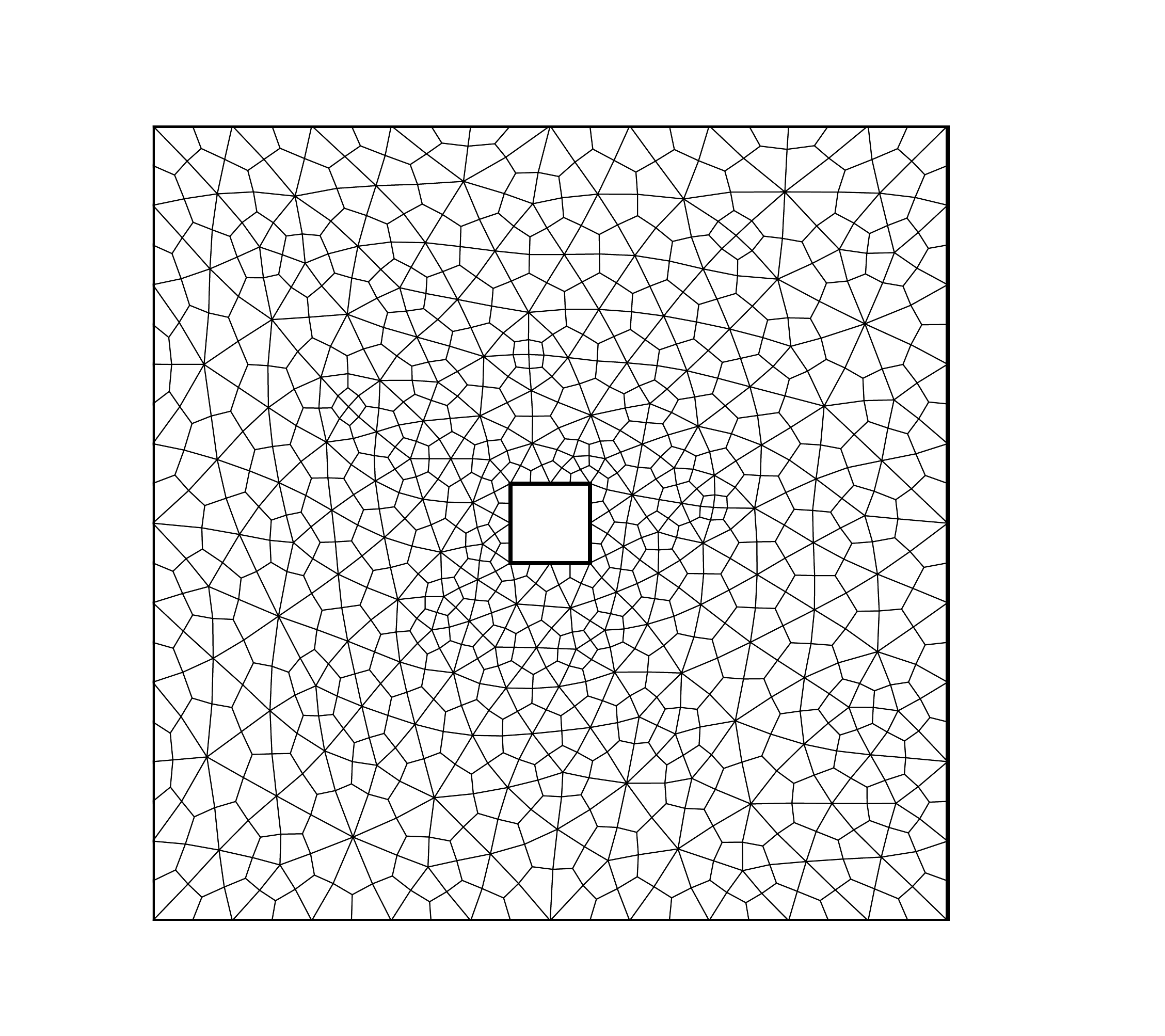}}
\subfigure{
\label{Fig:TransientDMP_QP_code_T3_mesh}
\includegraphics[clip,scale=0.35]{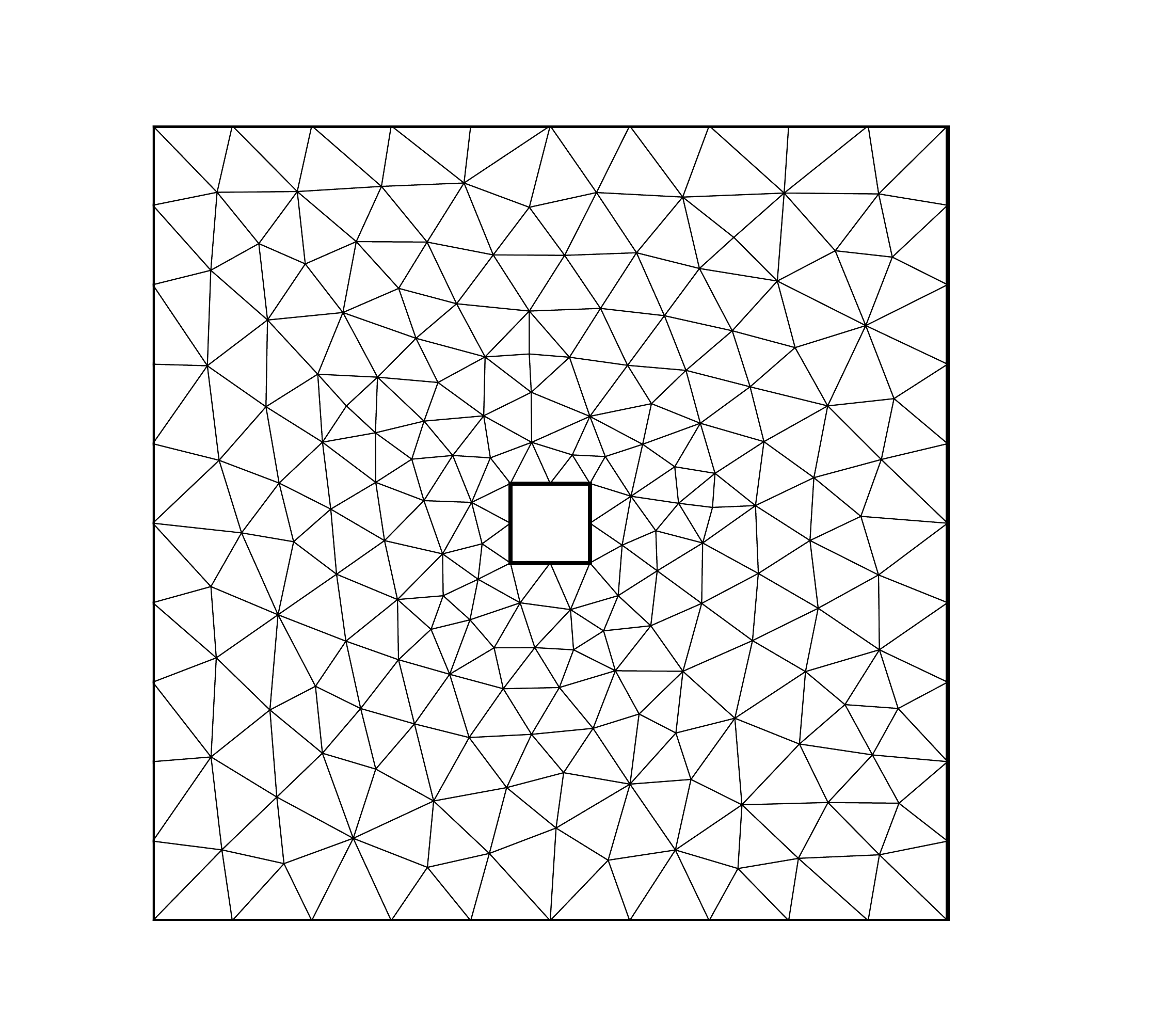}}
\caption{Anisotropic diffusion in a square plate with a hole: This figure 
shows the meshes employed in the numerical simulations using the 
proposed numerical methodology. The left figure shows an unstructured 
mesh based on four-node quadrilateral elements, and the right 
figure shows an unstructured mesh based on three-node triangular 
elements. The meshes are generated using GMSH \cite{gmsh.www}. 
\label{Fig:TransientDMP_QP_code_meshes}}
\end{figure}

%----------------------------------;
%  Figure 19: QP code and Q4 mesh  ;
%----------------------------------;
\begin{figure}
\subfigure[$\Delta t = 0.0001$, $ t= 3 \Delta t$]{
\includegraphics[clip,scale=0.35]{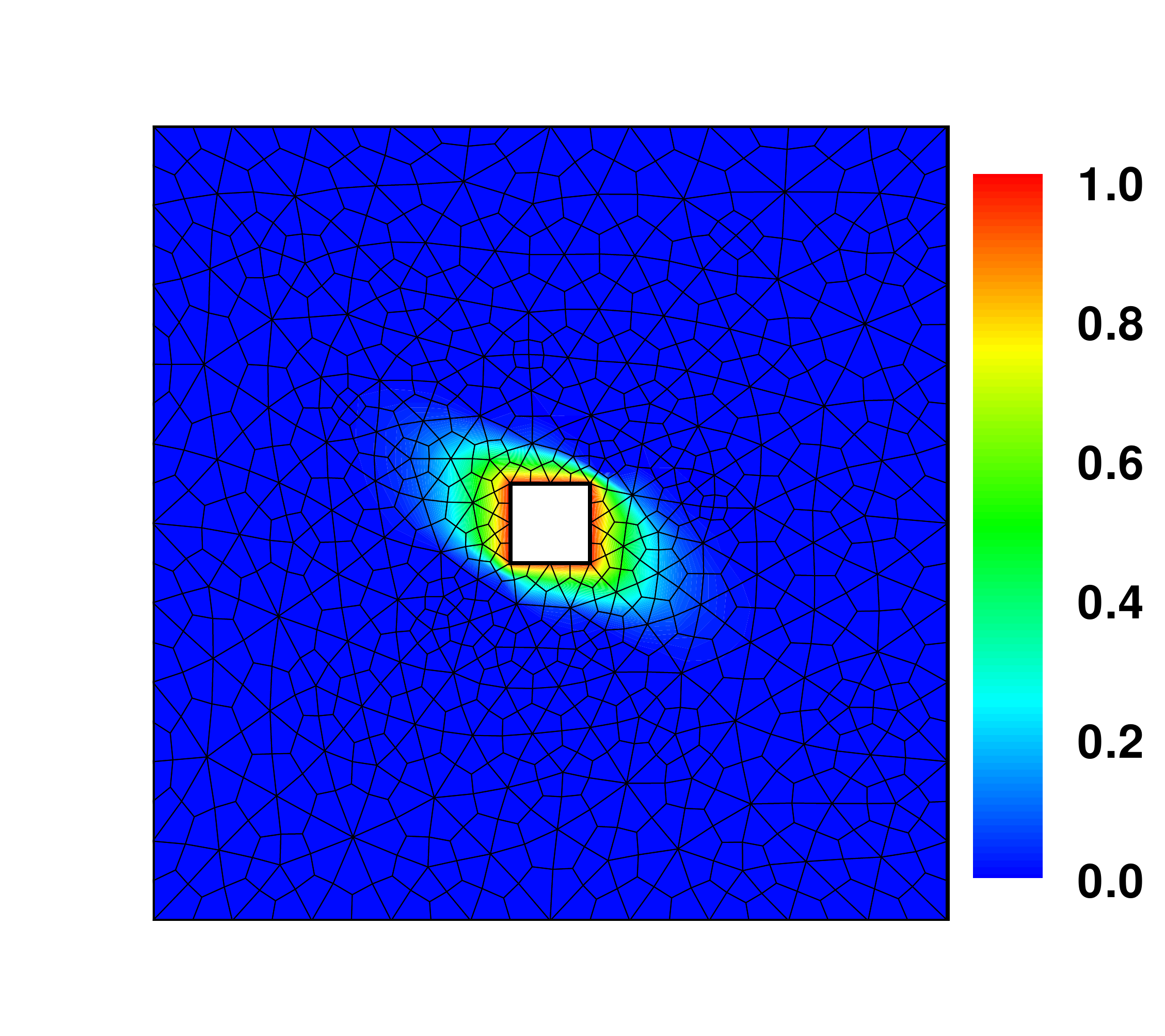}}
\subfigure[$\Delta t = 0.0001$, t = 0.05]{
\includegraphics[clip,scale=0.35]{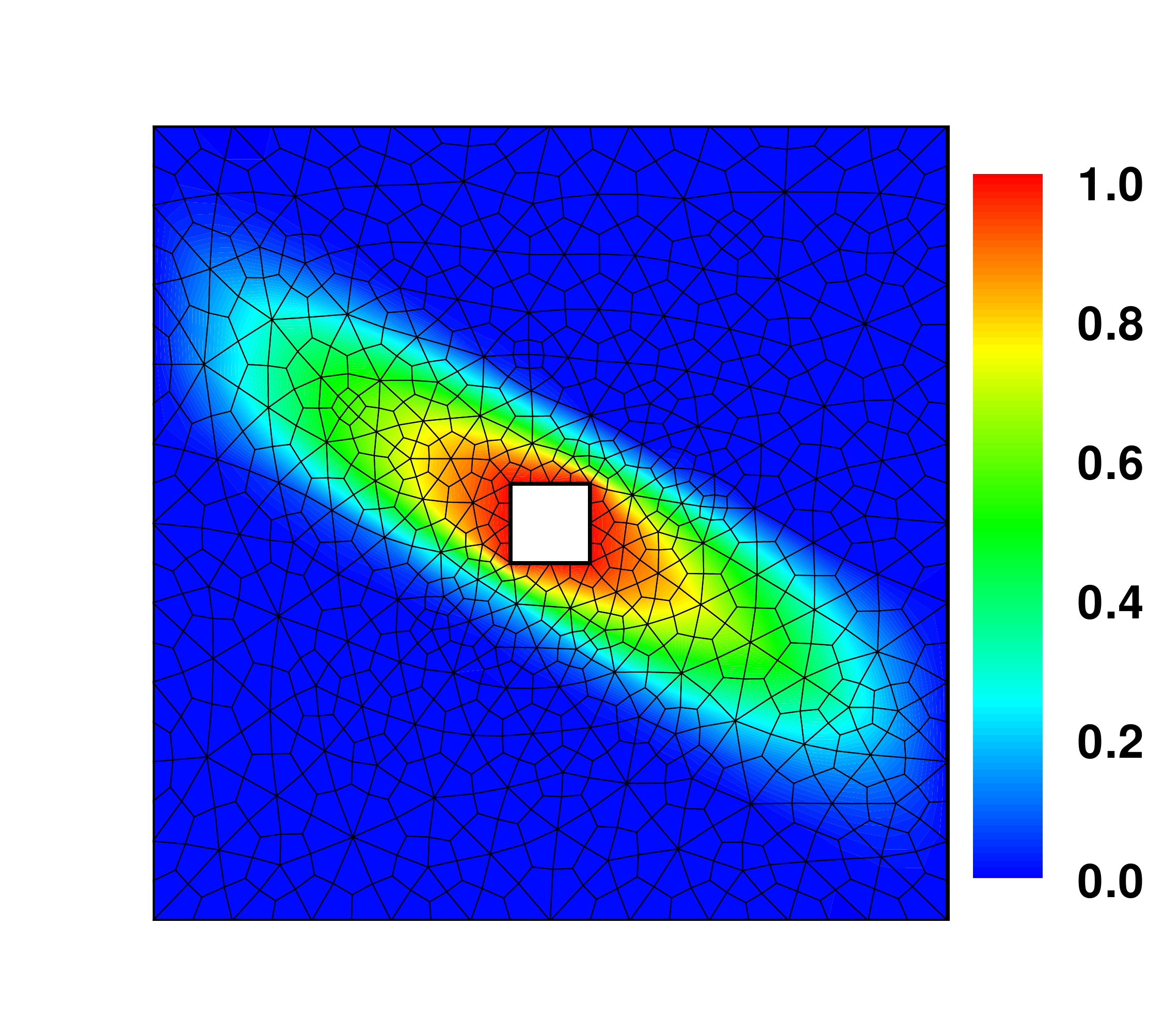}}
\subfigure[$\Delta t = 0.001$, $t = 3 \Delta t$]{
\includegraphics[clip,scale=0.35]{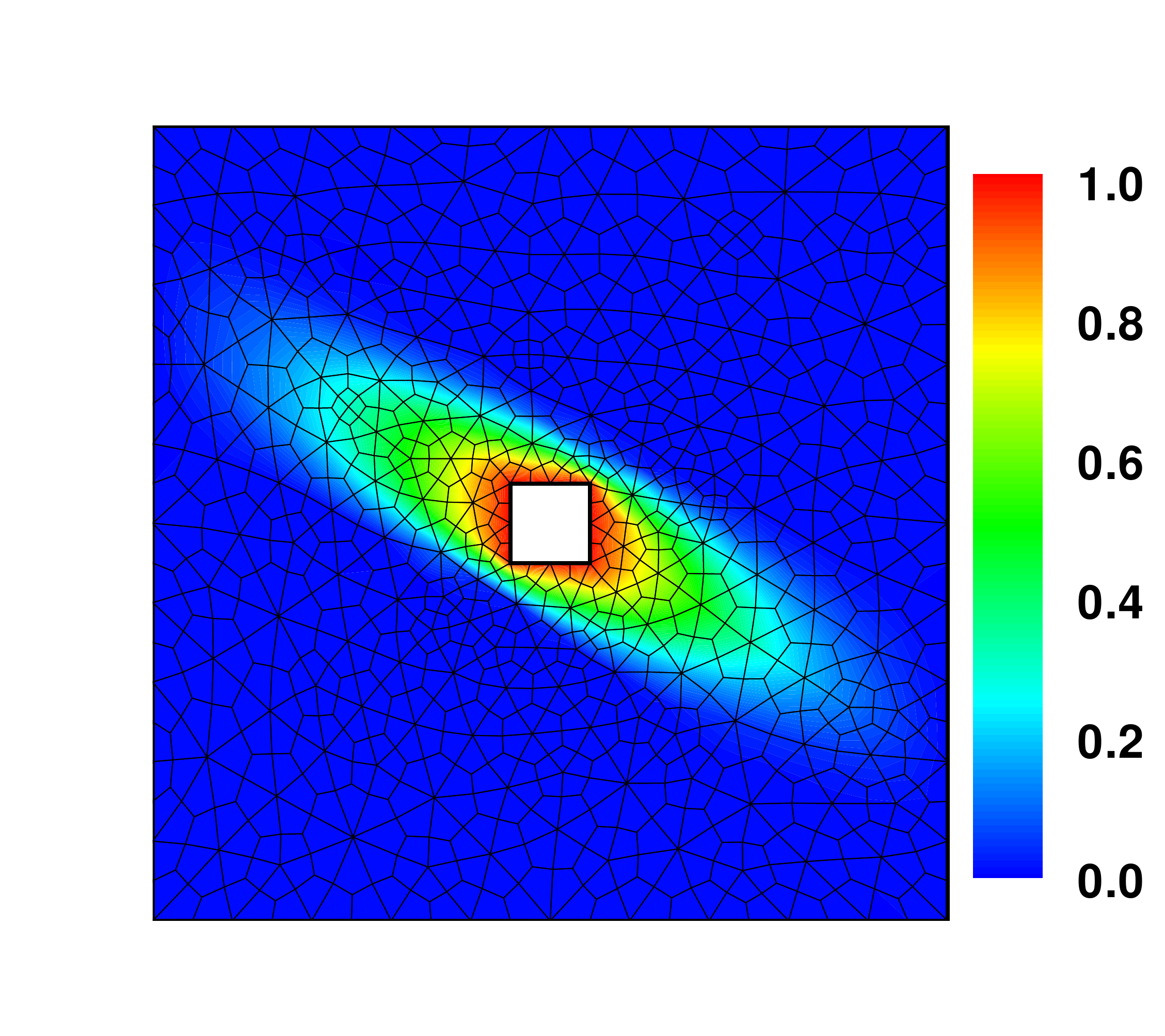}}
\subfigure[$\Delta t = 0.001$, t = 0.05]{
\includegraphics[clip,scale=0.35]{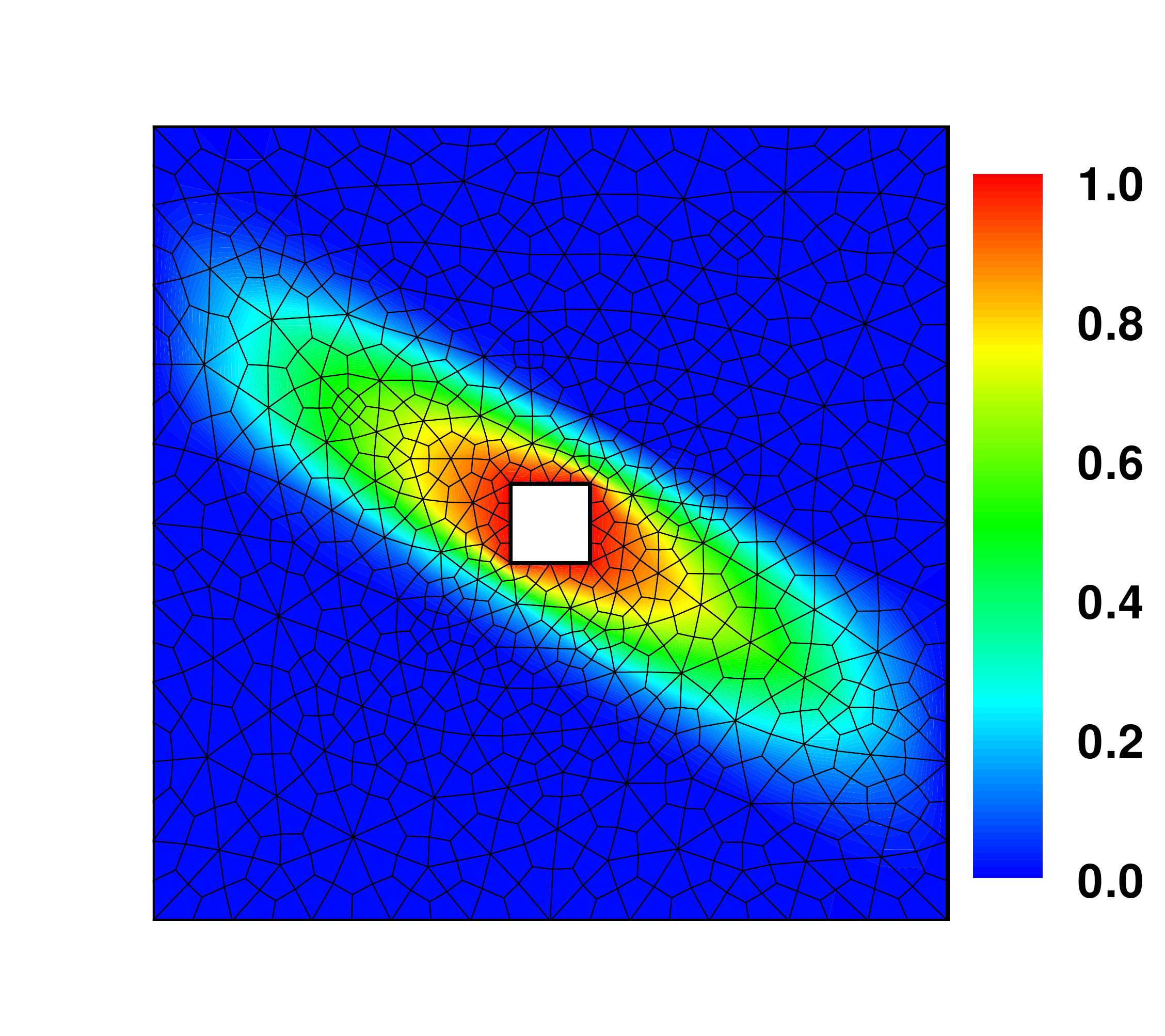}}
\caption{Anisotropic diffusion in square plate with a hole: 
Concentration profiles using the proposed methodology 
by employing \emph{unstructured four-node triangular mesh}, 
which is shown in figure \ref{Fig:TransientDMP_QP_code_Q4_mesh}. 
The numerical results satisfy the maximum principle and the non-negative 
constraint. The numerical results are visualized using Tecplot \cite{Tecplot360}. 
\label{Fig:TransientDMP_QP_code_conc_Q4}}
\end{figure}

%----------------------------------;
%  Figure 20: QP code and T3 mesh  ;
%----------------------------------;
\begin{figure}
\subfigure[$\Delta t = 0.0001$, $ t= 3 \Delta t$]{
\includegraphics[clip,scale=0.35]{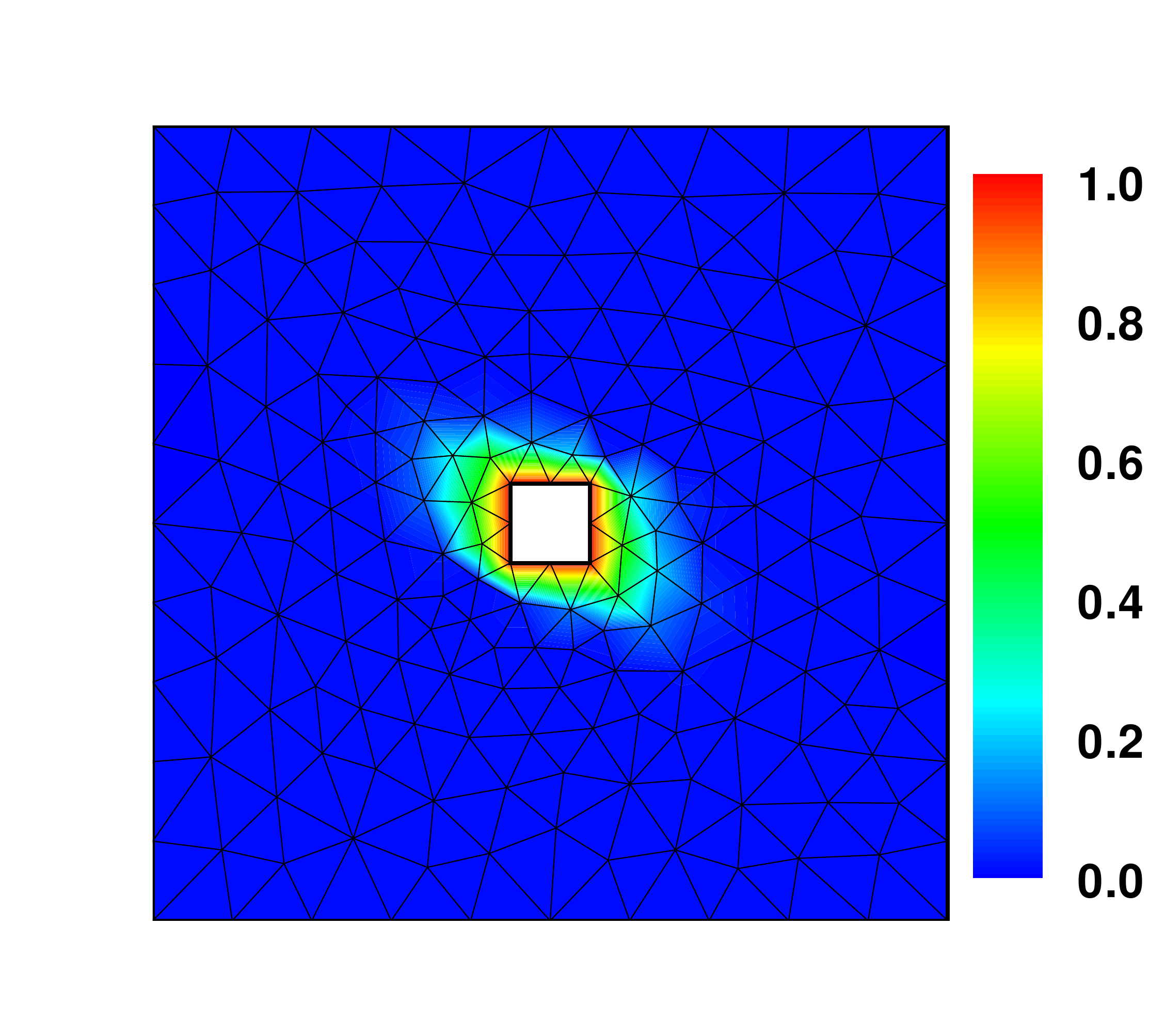}}
\subfigure[$\Delta t = 0.0001$, t = 0.05]{
\includegraphics[clip,scale=0.35]{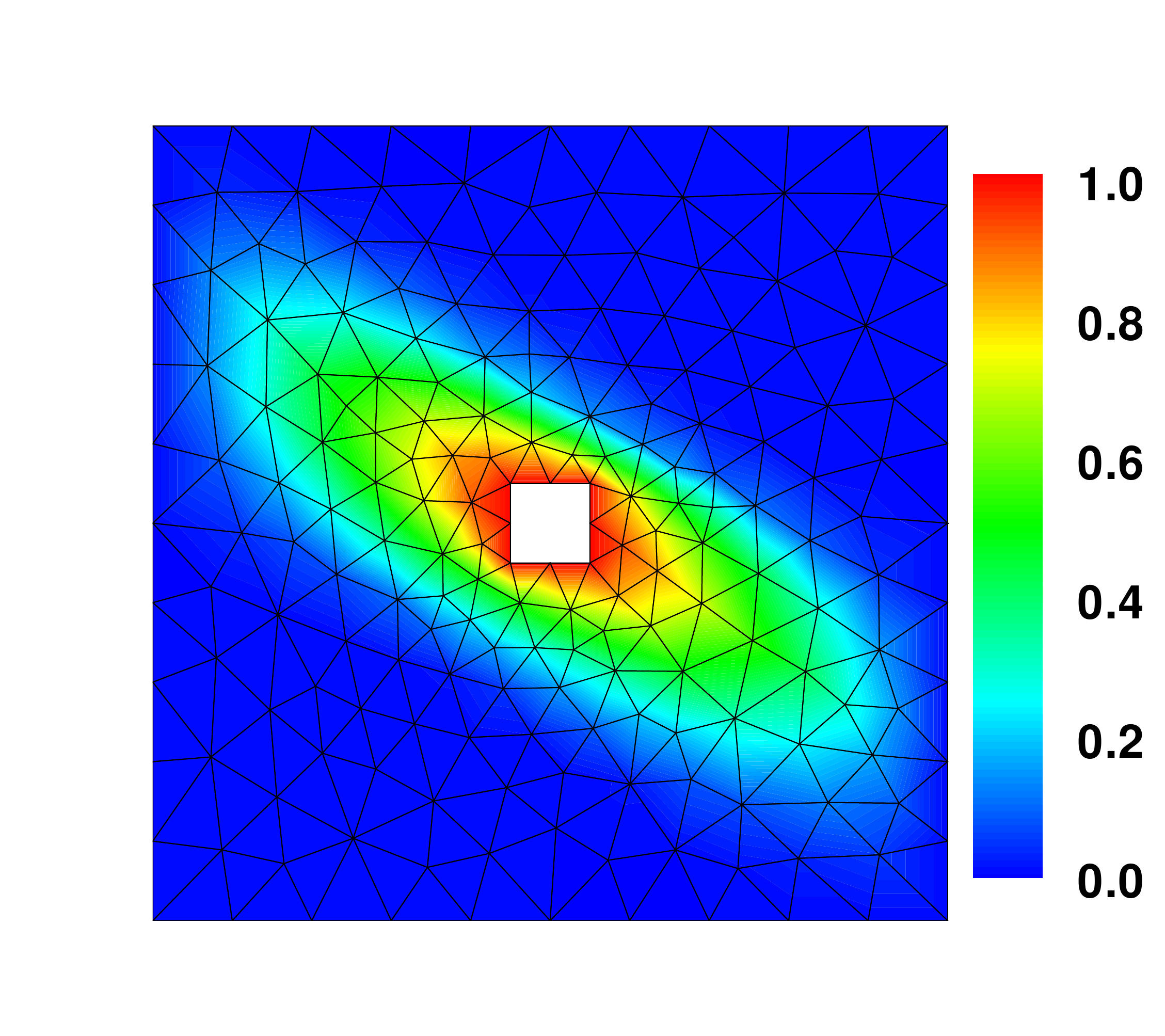}}
\subfigure[$\Delta t = 0.001$, $t = 3 \Delta t$]{
\includegraphics[clip,scale=0.35]{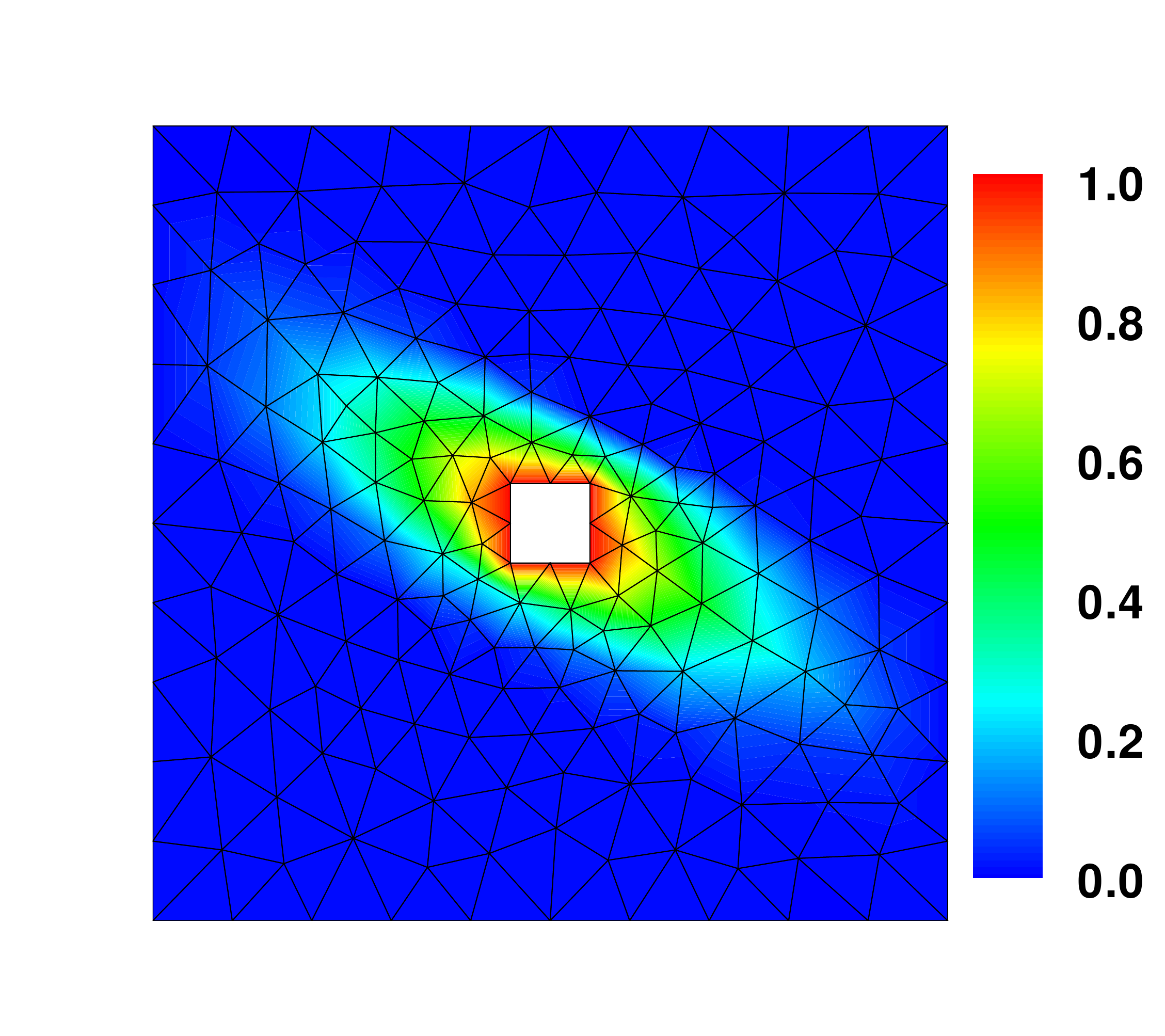}}
\subfigure[$\Delta t = 0.001$, t = 0.05]{
\includegraphics[clip,scale=0.35]{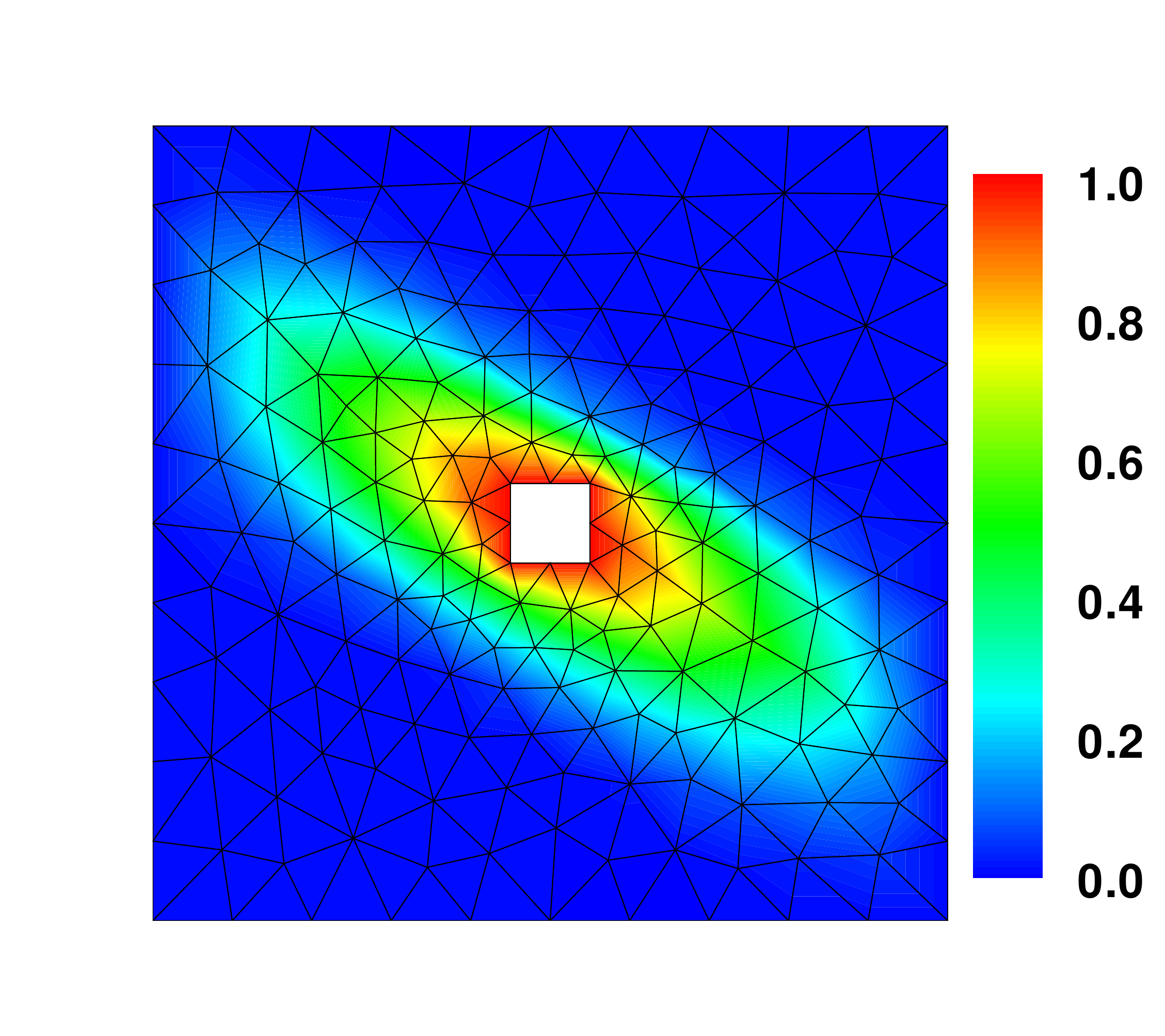}}
\caption{Anisotropic diffusion in square plate with a hole: 
Concentration profiles using the proposed methodology 
by employing \emph{unstructured three-node triangular mesh}, 
which is shown in figure \ref{Fig:TransientDMP_QP_code_T3_mesh}. 
The numerical results satisfy the maximum principle and the 
non-negative constraint. The numerical results are visualized 
using Tecplot \cite{Tecplot360}.\label{Fig:TransientDMP_QP_code_conc_T3}}
\end{figure}

%--------------------------------------------------------;
%  Figure 21: Backward Euler and lumped capacity matrix  ;
%--------------------------------------------------------;
\begin{figure}
\subfigure[$\Delta t = 0.0001$, $ t= 3 \Delta t$ (minimum = -0.01024)]{
\includegraphics[clip,scale=0.35]{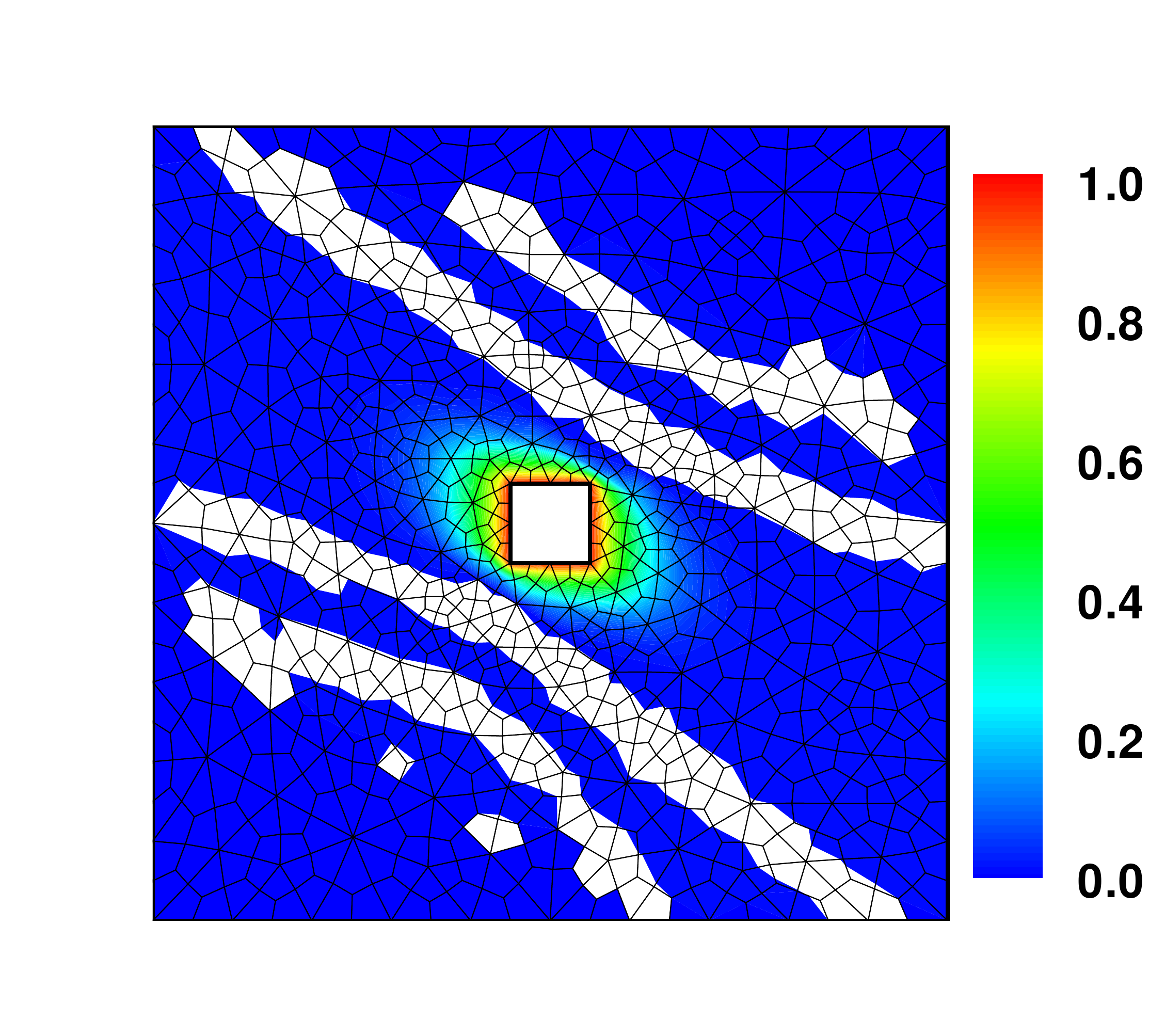}}
\subfigure[$\Delta t = 0.001$, $t = 3 \Delta t$ (minimum = -0.03603)]{
\includegraphics[clip,scale=0.35]{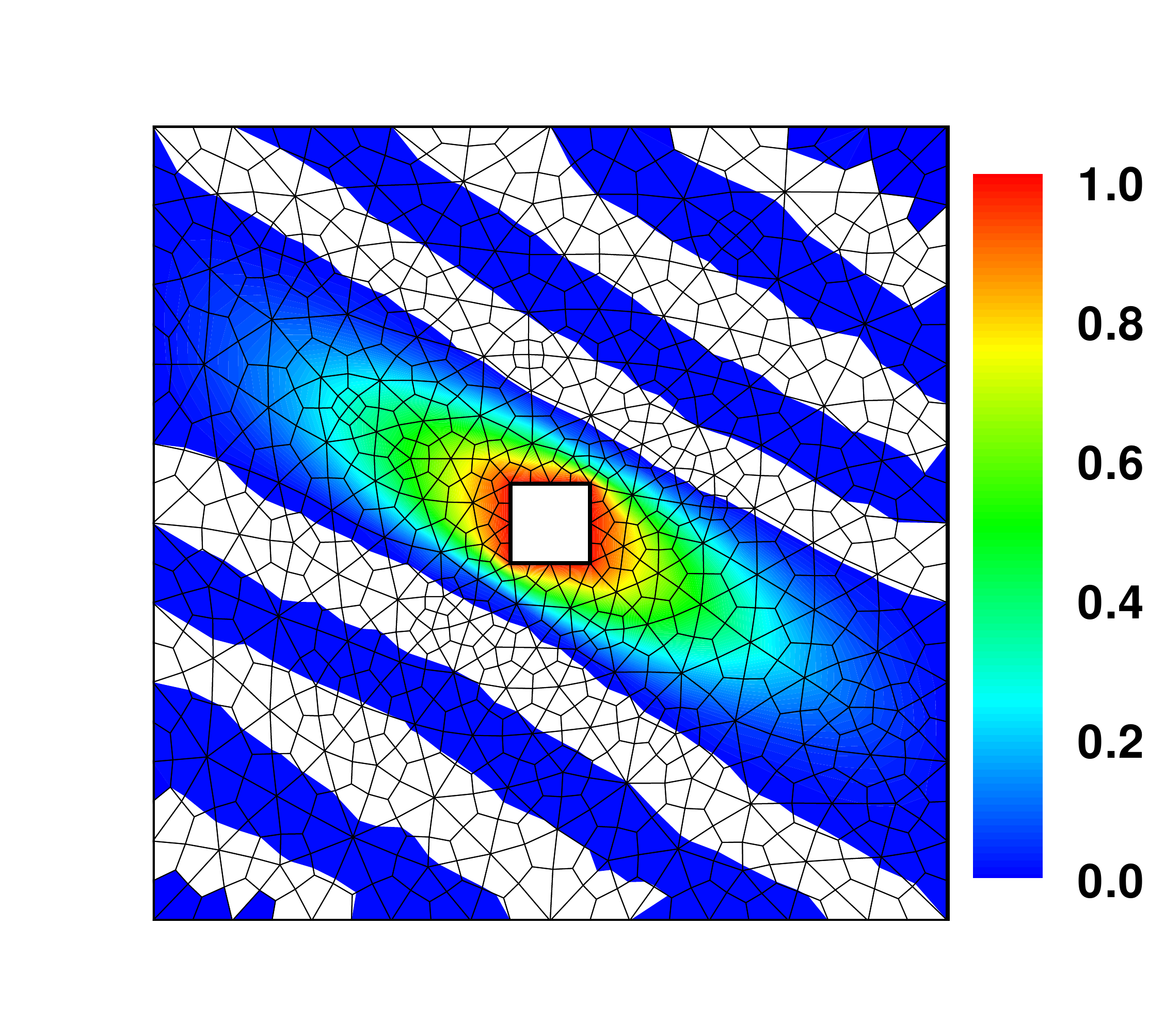}}
\caption{Anisotropic diffusion in square plate with a hole: 
This figure shows the concentration profiles obtained 
using the backward Euler time stepping scheme 
($\alpha_f = \alpha_m = \gamma = 1)$ and lumped 
capacity matrix approach.  
The unstructured four-node quadrilateral mesh shown in 
figure \ref{Fig:TransientDMP_QP_code_Q4_mesh} is 
used in the numerical simulation. 
Clearly, the numerical results do not satisfy the maximum principle 
and the non-negative constraint. In the case of isotropic diffusion, 
employing the backward Euler time-stepping scheme with lumped 
capacity matrix approach can be employed to satisfy maximum 
principles and the non-negative constraint (with some restrictions 
on the mesh). As it is evident from this figure, meeting these conditions 
is not sufficient in the case of transient anisotropic diffusion. The regions 
of the violation of the non-negative constraint are shown in white color. 
The numerical results are visualized using Tecplot \cite{Tecplot360}.
\label{Fig:TransientDMP_BE_lumped}}
\end{figure}

%===============================================================;
%  Test problem: Diffusion in heterogeneous anisotropic medium  ;
%===============================================================;

%---------------------------------------------;
%  Figure 22: 2D heterogeneous problem: Mesh  ;
%---------------------------------------------;
\begin{figure}[htp]
  \centering
    \includegraphics[clip,scale=0.4]{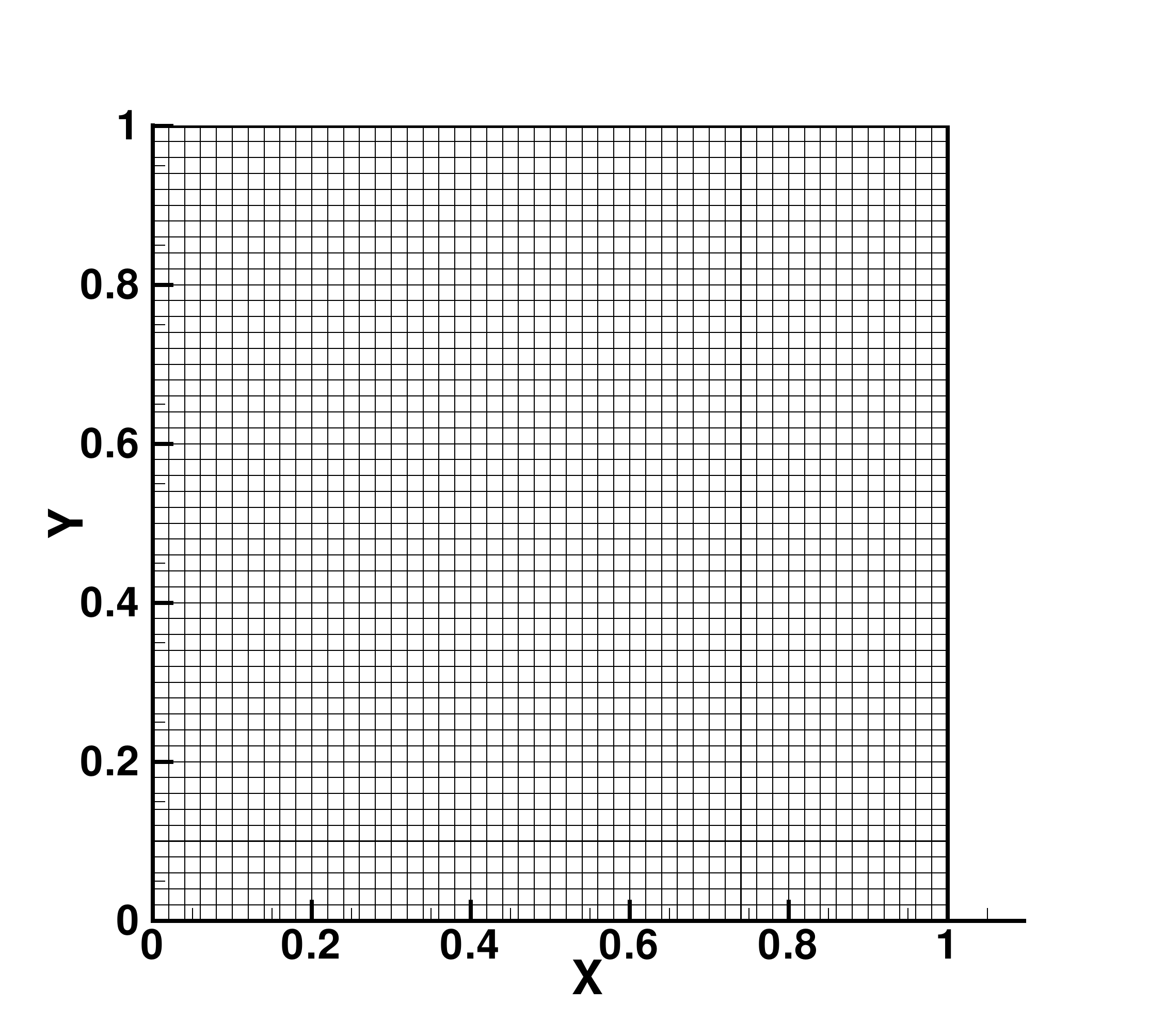}
  \caption{Diffusion in heterogeneous anisotropic medium: This 
    figure shows a typical computational mesh used in the numerical 
    simulations. The mesh in the figure is made of four node quadrilateral 
    finite elements with XSeed = YSeed = 51.  Note that XSeed and 
    YSeed, respectively, denote the number of nodes along the 
    x-direction and y-direction. A similar mesh with XSeed = YSeed 
    = 101 is also used in the numerical simulations. 
    \label{Fig:TransientDMP_2D_heterogeneous_mesh}}
\end{figure}

%--------------------------------------------------------------;
%  Figure 23: 2D heterogeneous problem: Minimum concentration  ;
%--------------------------------------------------------------;
\begin{figure}[htp]
  \centering
  \includegraphics[clip,scale=0.45]{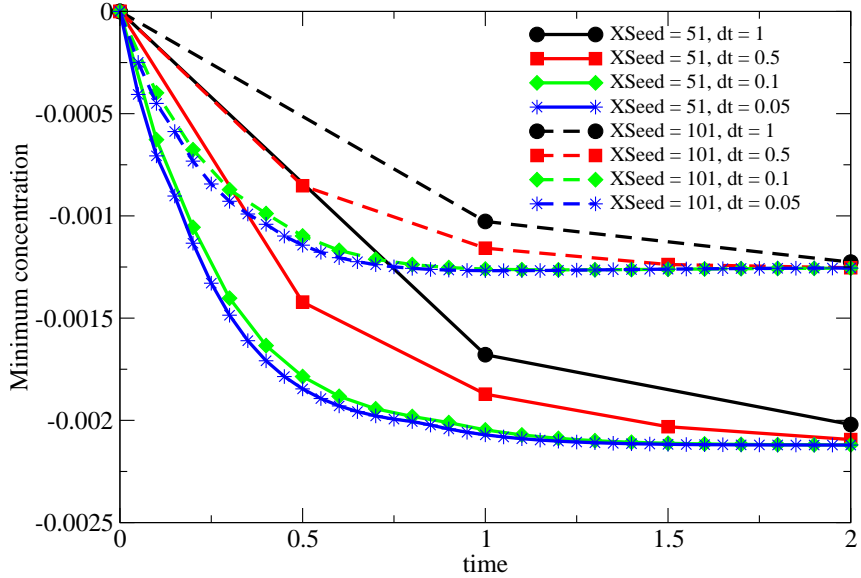}
  \caption{Diffusion in heterogeneous anisotropic medium: 
    This figure shows the variation of the minimum concentration 
    under the single-field formulation. The results are shown for 
    two different meshes (XSeed = YSeed = 51 and 101). Note that 
    XSeed and YSeed denote the number of nodes along x-direction 
    and y-direction, respectively. Various time steps ($\Delta t 
    = 0.05, \; 0.1, \; 0.5$ and $1$) are employed.  The rationale 
    behind the choice of the time steps is that the (transient) 
    solution is very close to the steady-state response for 
    times greater than 2. Hence, any time step bigger than the 
    ones used in the numerical simulation does not capture the 
    transient features of the problem, and will not be appropriate 
    for a transient analysis. Any smaller time step will result 
    in bigger violation of the non-negative constraint, which 
    will be evident from the numerical results. The single-field 
    formulation produced negative concentrations for both the 
    meshes and for both the time steps. The proposed methodology 
    produced non-negative solutions under all the cases considered. 
    \label{Fig:TransientDMP_2D_heterogeneous_min_conc}}
\end{figure}

%--------------------------------------------------------------;
%  Figure 24: 2D heterogeneous problem: Maximum concentration  ;
%--------------------------------------------------------------;
\begin{figure}[htp]
  \centering
    \includegraphics[clip,scale=0.45]{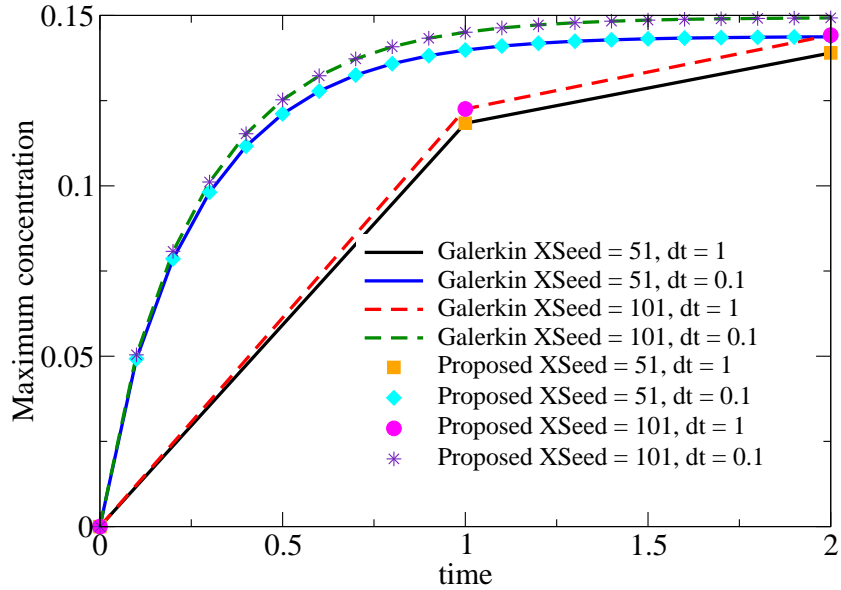}
  \caption{Diffusion in heterogeneous anisotropic medium: This 
    figure shows the variation of the maximum concentration under 
    the Galerkin single-field formulation and the proposed methodology. 
    The results are shown for two different meshes (XSeed = YSeed = 
    51 and 101), and for two time steps ($\Delta t = 1 \; \mathrm{and} 
    \; \Delta t = 0.1$). Note that XSeed and YSeed denote the number 
    of nodes along the x-direction and y-direction, respectively. As evident 
    from the figure, the Galerkin single-field formulation and the proposed 
    methodology produced similar results for the maximum concentration 
    with respect to time. \label{Fig:TransientDMP_2D_heterogeneous_max_conc}}
\end{figure}

%---------------------------------------------------------------;
%  Figure 25: 2D heterogeneous problem: Concentration contours  ;
%---------------------------------------------------------------;
\begin{figure}[htp]
  \centering
  \subfigure{
    \includegraphics[clip,scale=0.31]{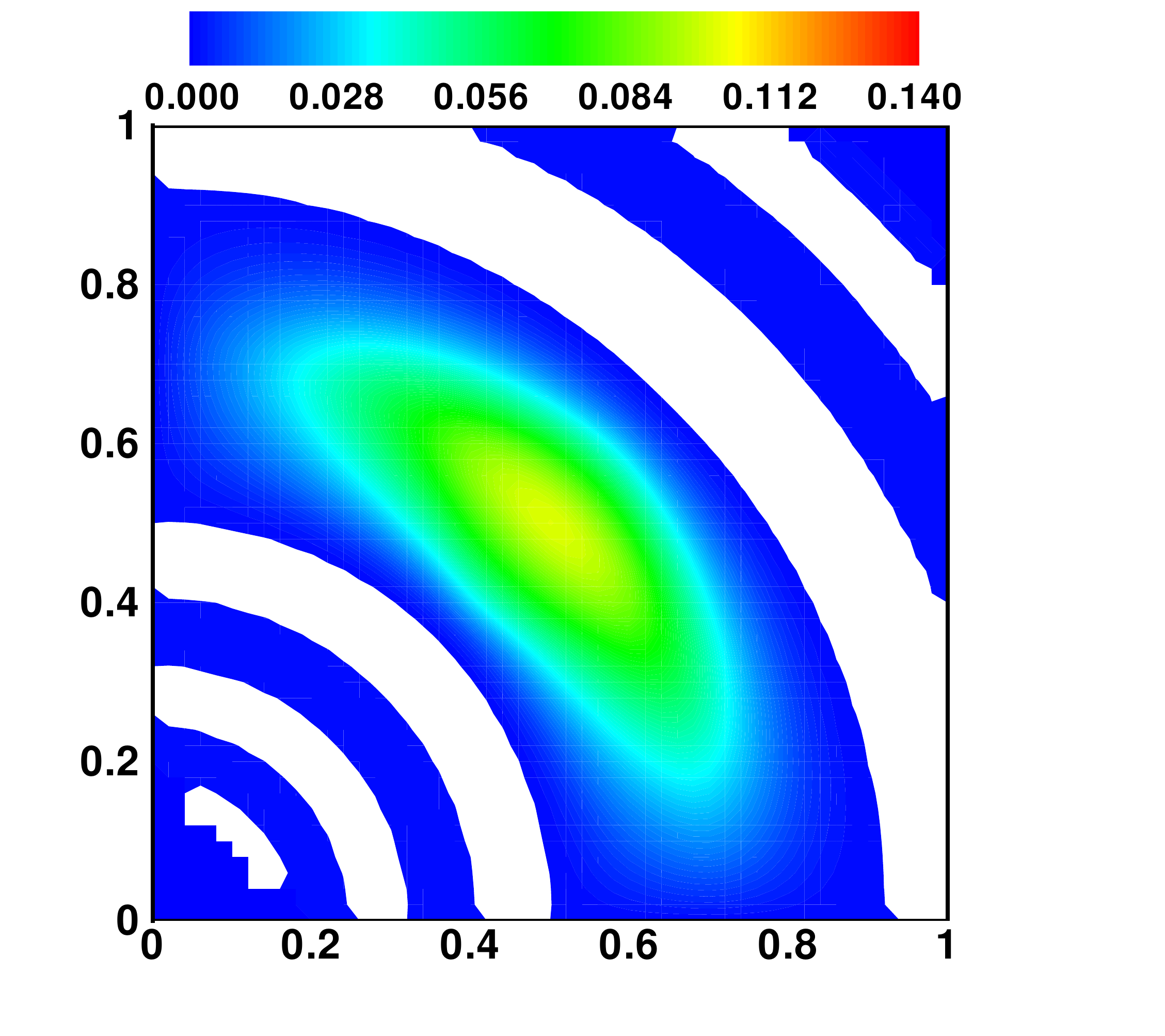}}
  \subfigure{
    \includegraphics[clip,scale=0.31]{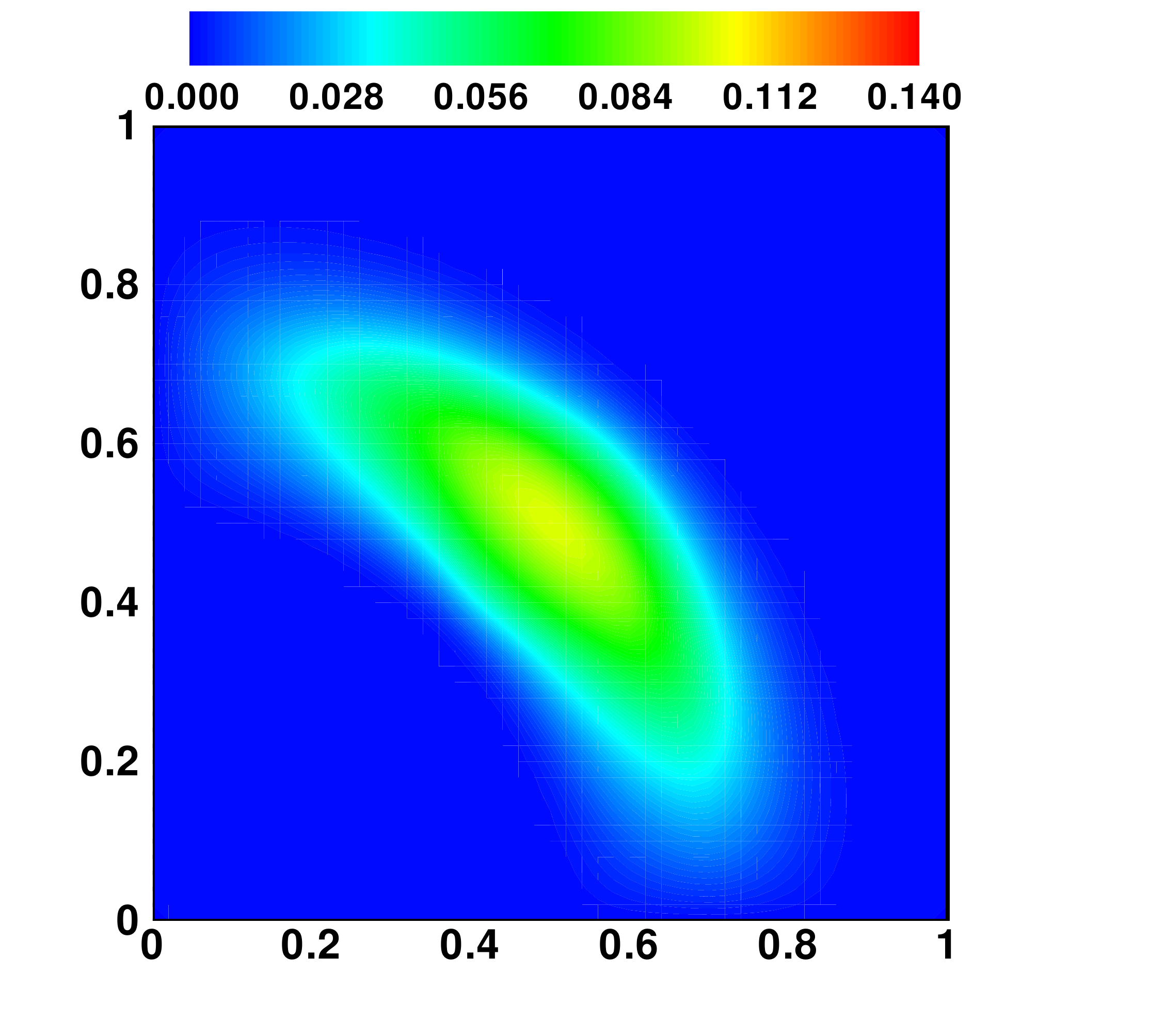}}
 \subfigure{
   \includegraphics[clip,scale=0.31]{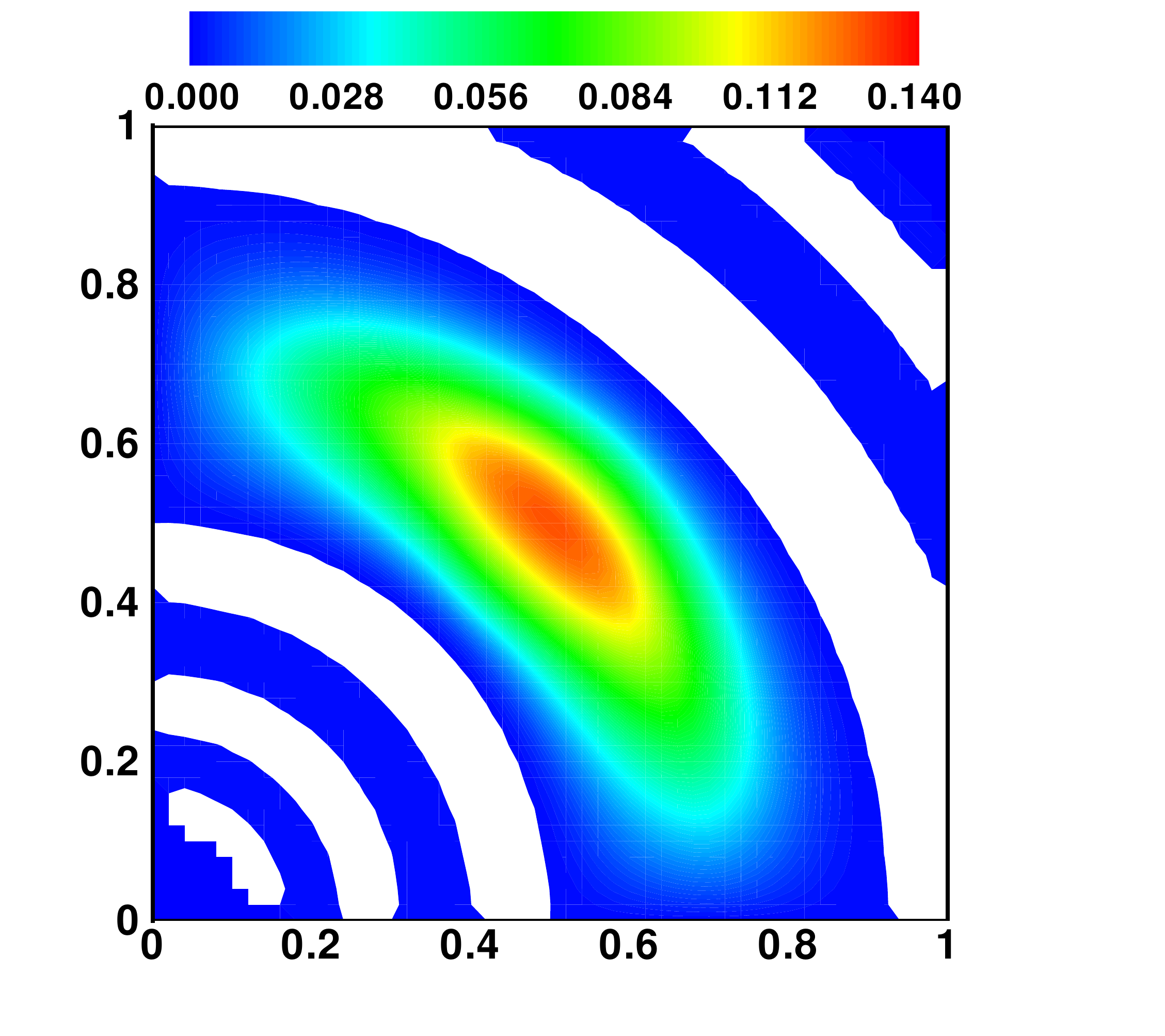}}
  \subfigure{
    \includegraphics[clip,scale=0.31]{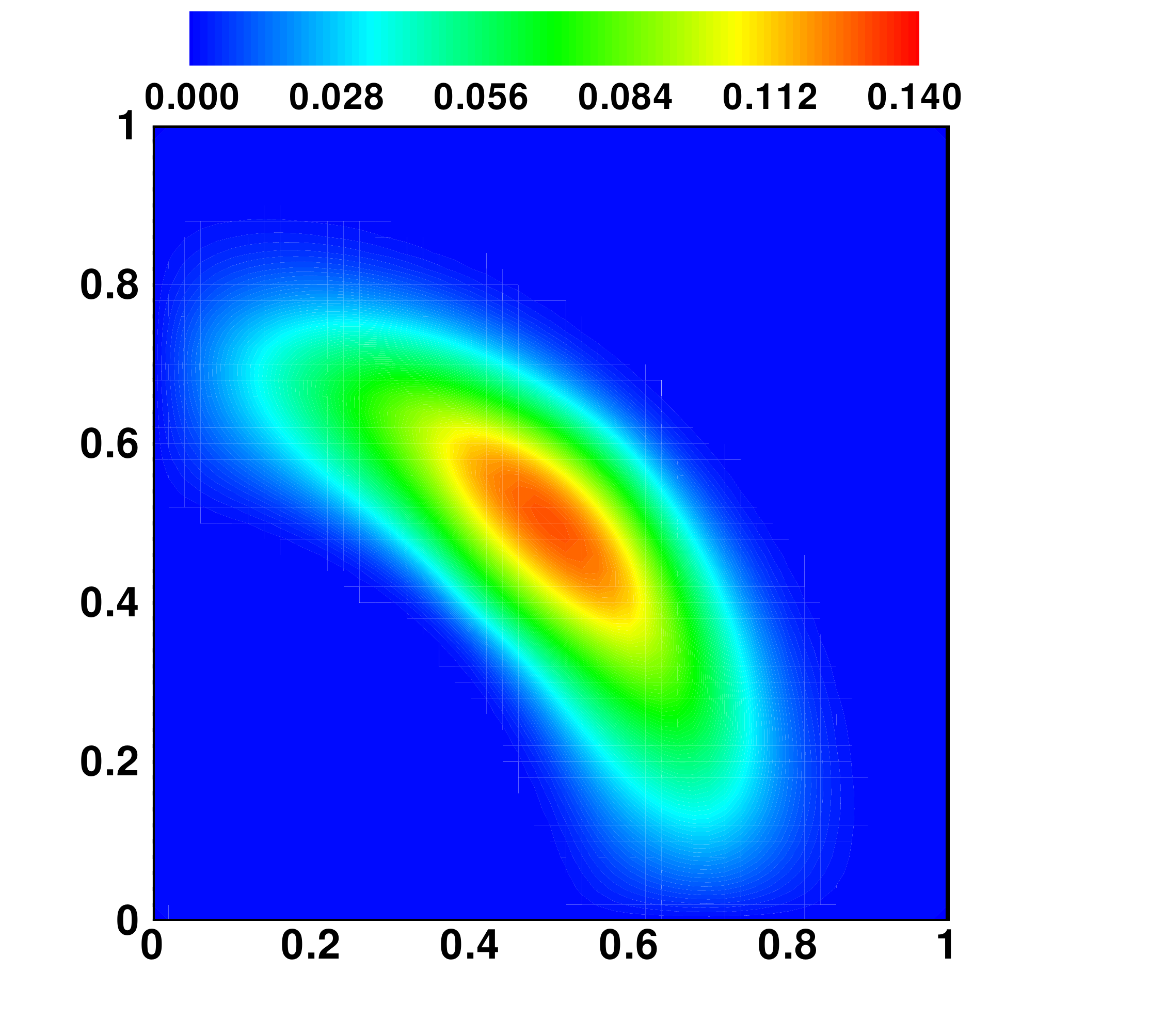}}
  \subfigure{
   \includegraphics[clip,scale=0.31]{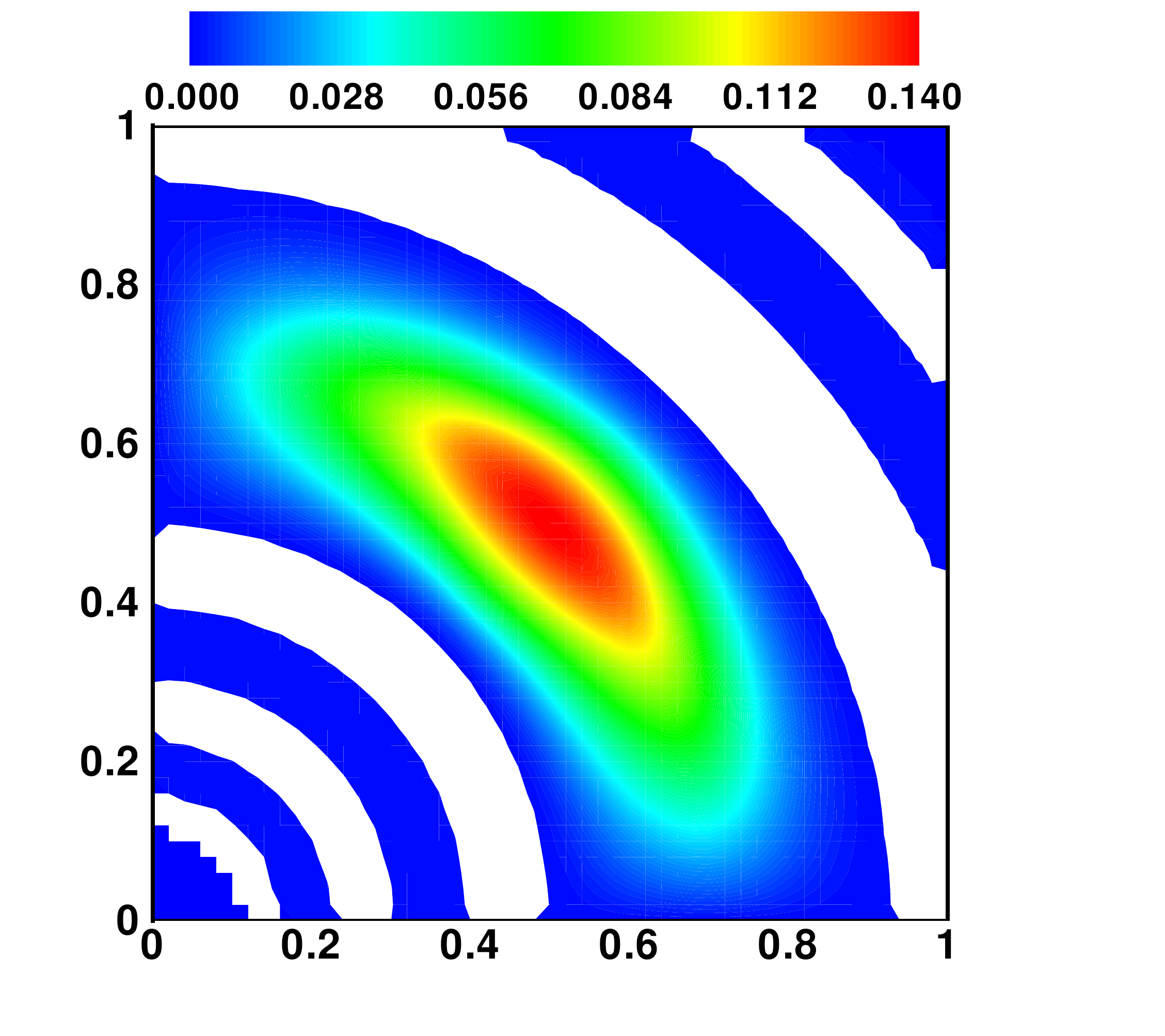}}
  \subfigure{
    \includegraphics[clip,scale=0.31]{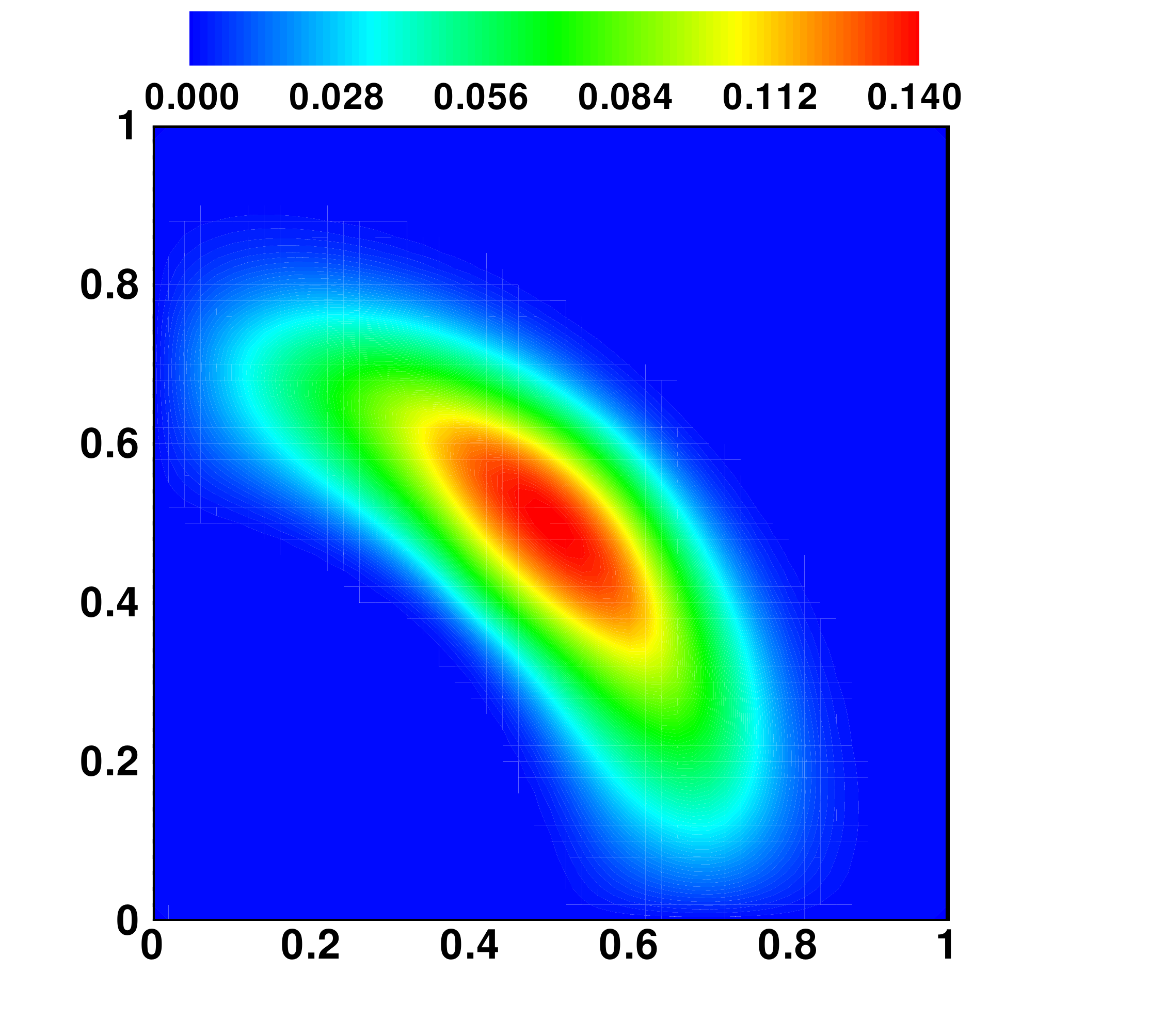}}
  \caption{Diffusion in heterogeneous anisotropic medium: 
    This figure shows the contours of the concentration under 
    the Galerkin single-field formulation (left) and the proposed 
    methodology (right) at time = 0.5 (top), time = 1.0 (middle) 
    and time = 2 (bottom). The time step is taken as $\Delta t 
    = 0.5$, and $\mathrm{XSeed} = \mathrm{YSeed} 
    = 51$. The number of nodes along the x-direction and 
    y-direction are, respectively, denoted by XSeed and 
    YSeed. The regions that have violated the non-negative 
    constraint are indicated in white color. 
    \label{Fig:TransientDMP_2D_heterogeneous_contours}}
\end{figure}

%-------------------------------------------------;
%  Figure 26: 2D heterogeneous problem: Elapsed time  ;
%-------------------------------------------------;
\begin{figure}[htp]
  \centering
    \includegraphics[clip,scale=0.5]{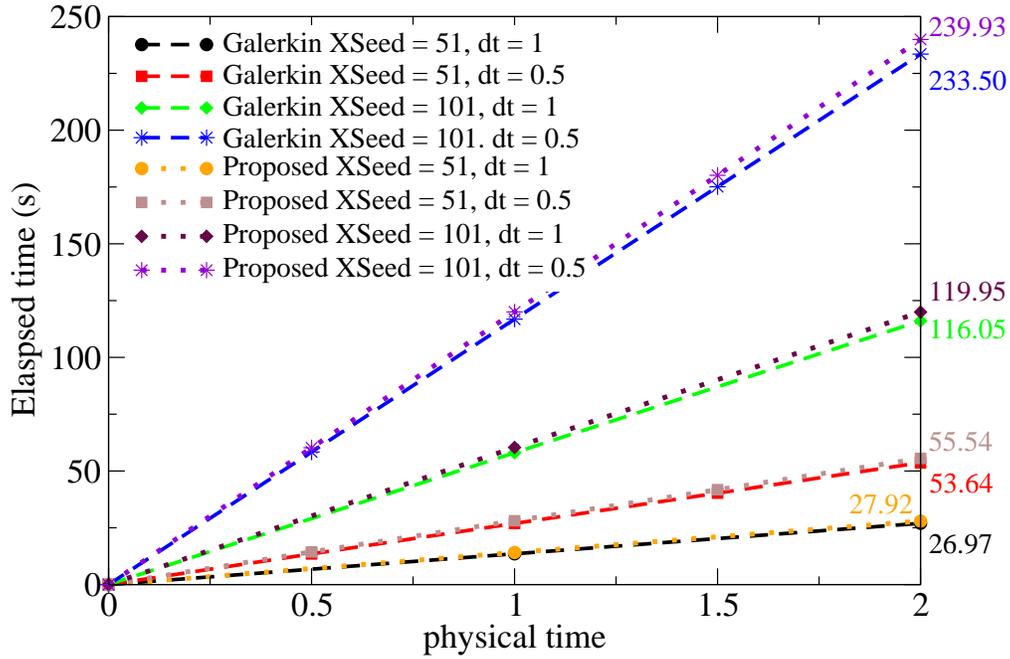}
  \caption{Diffusion in heterogeneous anisotropic medium: This 
    figure shows the elapsed time (i.e., wall clock time) taken by 
    the Galerkin single-field formulation and the proposed methodology 
    for various time steps and for various meshes. The x-axis indicates the 
    physical time, and the y-axis shows the wall clock time required 
    to reach various time levels. The numerical simulations 
    are carried using MATLAB R2012a \cite{MATLAB_2012a} on 
    Ubuntu Linux 12.04 LTS Operating System. The elapsed times 
    are obtained using tic-toc, which is a MATLAB's built-in 
    feature. 
    \label{Fig:TransientDMP_2D_heterogeneous_Elapsed_time}}
\end{figure}

\end{document}